\begin{document}
\title{The uniqueness of elementary embeddings}
\author{Gabriel Goldberg\\ Evans Hall\\ University Drive \\ Berkeley, CA 94720}
\maketitle
\section{Introduction}
Much of the theory of large cardinals beyond a measurable cardinal
concerns the structure of elementary embeddings of the universe
of sets into inner models. This paper seeks to answer the question of
whether the inner model uniquely determines the elementary embedding.

The question cannot be answered assuming ZFC alone: 
in unpublished work, exposited in \cref{SCHSection}, Woodin observed that it is consistent that
there are distinct normal ultrafilters with the same ultrapower.
He proved, however, that definable embeddings
of the universe into the same model must agree \textit{on the ordinals,}
and under a strong version of the \HOD\ Conjecture,
he proved the same result for arbitrary elementary embeddings.
Woodin conjectured that the result can be proved in second-order set theory (NBG) with the Axiom of Choice. 
The first theorem of this paper confirms his conjecture:
\begin{repthm}{thm:unique_ords}
    Any two elementary embeddings from the universe into the same inner model agree
    on the ordinals.
\end{repthm}

In \cref{ExtendibleSection}, we prove stronger uniqueness properties of elementary embeddings
assuming global large cardinal axioms. 
To avoid repeating the same hypothesis over and over, we introduce the following terminology:
\begin{defn}
    If \(\delta\) is an ordinal,
    we say \textit{the uniqueness of elementary embeddings holds above \(\delta\)}
    if for any inner model \(M\), there is at most one elementary embedding from the universe
    into \(M\) with critical point greater than \(\delta\).
\end{defn}
We say \textit{the uniqueness of elementary embeddings holds} if
it holds above \(0\). The uniqueness of elementary embeddings 
is formulated in the language of second-order
set theory.

It turns out that the uniqueness of elementary embeddings holds
above sufficiently large cardinals:
\begin{repthm}{thm:extendible_uniqueness}
    The uniqueness of elementary embeddings holds above the least extendible cardinal.
\end{repthm}
The hypothesis of \cref{thm:extendible_uniqueness}
seems to be optimal: for example, \cref{thm:superconsistency}
shows that it is consistent with a proper class of supercompact cardinals
that the uniqueness of elementary embeddings fails above every cardinal.

Our analysis also yields the uniqueness of elementary 
embeddings (above \(0\)) from other hypotheses. 
The independence
results of \cite{Fuchs,HamkinsReitz, Reitz} 
are often taken to show that Reitz's Ground Axiom has no consequences. 
The following theorem indicates that this may not 
be completely true:
\begin{repthm}{thm:ga}[Ground Axiom]
    If there is a proper class of strongly compact cardinals,
    then the uniqueness of elementary embeddings holds.
\end{repthm}
The conclusion of this result (that is, the outright uniqueness of elementary embeddings)
cannot be proved from any large cardinal axiom by the proof of \cref{thm:consistency}
and the L\'evy-Solovay theorem \cite{LevySolovay}. 
It is unclear whether the strongly compact cardinals 
are necessary, though: it seems unlikely, but
it could be that the Ground Axiom alone suffices to prove the result.
With this in mind, we conclude the section by proving a similar result from a different hypothesis.
\begin{repthm}{thm:strong_ga}[Ground Axiom]
    If there is a proper class of strong cardinals,
    then the uniqueness of ultrapower embeddings holds.
\end{repthm} 

In \cref{UASection}, we consider the situation under the Ultrapower Axiom (UA), 
which turns out to be quite simple:
\begin{repthm}{thm:ua}[UA]
    The uniqueness of elementary embeddings holds.
\end{repthm}

Finally, we use \cref{thm:extendible_uniqueness} 
to analyze a principle called the Weak Ultrapower Axiom (Weak UA) under the assumption of 
an extendible cardinal.
Weak UA states that any two ultrapowers of the universe of sets have
a common internal ultrapower. 
Before this work, we knew of no consequences of Weak UA.
Here we sketch the proofs of some results indicating 
that above an extendible cardinal, Weak UA is almost as powerful as UA: 
\begin{repthm}{thm:weak_ua_bound}[Weak UA]
    If \(\kappa\) is extendible, then \(V\) is a 
    generic extension of \(\HOD\) by a forcing in \(V_\kappa\).
\end{repthm}
The reader familiar with Vop\v{e}nka's theorem will note
that this is just a fancy way of saying that every set is
ordinal definable from some fixed parameter \(x\in V_\kappa\).
We also show that UA holds in HOD for
embeddings with critical point greater than or equal to \(\kappa\).
(See \cref{thm:hod_ua} for a precise statement.)
We do not know how to show this is true in \(V\)!
Combining this with some proofs from \cite{GCH} allows
us to prove the GCH above the first extendible cardinal under Weak UA.
\begin{repthm}{thm:gch}[Weak UA]
    If \(\kappa\) is extendible, then for all cardinals \(\lambda \geq \kappa\), \(2^\lambda = \lambda^+\).
\end{repthm}
\section{Preliminaries}
\subsection{Ultrapower embeddings and extender embeddings}
If \(P\) and \(Q\) are models of ZFC, \(i: P\to Q\) is an elementary embedding,
\(X\in P\) is a set, and \(a\in j(X)\), then 
there is a minimum elementary substructure of \(Q\) containing
\(i[P]\cup \{a\}\); namely, the substructure
\[H^Q(i[P]\cup \{a\}) = \{i(f)(a) : f\in P, f : X\to P\}\]
The fact that \(H^Q(i[P]\cup \{a\})\) is an elementary substructure of \(Q\) is a consequence of the Axiom of Choice
and the Axiom of Collection, applied in \(P\), 
and is really just a restatement of \L\'os's Theorem
applied to the \(P\)-ultrafilter \(U = \{A\subseteq X: A\in P,a\in i(A)\}\).

The same argument shows that for any \(X\in P\)
and set \(B\subseteq i(X)\),
\[H^Q(i[P]\cup B) = \{i(f)(a) : a\in B,f\in P, f : X\to P\}\]
is an elementary substructure of \(Q\).
\begin{defn}
    Suppose \(P\) and \(Q\) are models of set theory. An elementary embedding \(i : P\to Q\) is an 
    \textit{ultrapower embedding}
    of \(P\) if there is some \(X\in P\) and some \(a\in i(X)\) such that 
    \(Q = H^Q(i[P]\cup \{a\})\). 
\end{defn}
An elementary embedding \(i : P\to Q\)
is an ultrapower embedding if and only if there is a \(P\)-ultrafilter
\(U\) and an isomorphism \(k : \Ult(P,U)\to Q\) such that \(k\circ i = j_U\).
\begin{defn}
    An elementary embedding \(i : P \to Q\) is an \textit{extender embedding}
    if there is a set \(A\) such that \(Q = H^Q(i[P]\cup i(A))\).
\end{defn}
Again, an elementary embedding is an extender embedding if and only if it is
isomorphic to the ultrapower of \(P\) by a \(P\)-extender (using only functions in \(P\)).

A \textit{generator} of an elementary embedding \(i :P\to Q\) is an ordinal \(\xi\) of \(Q\)
such that \(\xi\notin H^Q(i[P]\cup \xi)\). By the wellordering theorem,
\(Q = H^Q(i[P]\cup \Ord^Q)\), and if \(Q\) is wellfounded,
it follows easily that \(Q = H^Q(i[P]\cup G)\)
where \(G\) is the class of generators of \(i\). Thus
an elementary embedding is an extender embedding if and only if its generators
form a set. 
\begin{defn}\label{defn:lambda}Given an elementary embedding \(i :P\to Q\) and a set \(a\in Q\),
we let \(\lambda_i(a)\) denote the least cardinality of a set \(X\)
such that \(a\in i(X)\).\end{defn}
Note that \(\lambda_i(a)\) may not be defined, either because there is
no \(X\) such that \(a\in i(X)\) or because there is no minimum cardinality
of such a set. We will be focused solely on cofinal elementary embeddings
of wellfounded models, in which case \(\lambda_i(a)\) will always be defined.

By the wellordering theorem, \(\lambda_i(a)\) is the least ordinal \(\lambda\) of \(P\) such that
\(a \in H^{Q}(i[P]\cup i(\lambda))\). 
This immediately implies:
\begin{lma}\label{lma:discontinuity}
    Suppose \(i : P\to Q\) is an elementary embedding, \(a\) is a point in \(Q\),
    and \(\lambda = \lambda_i(a)\). 
    If \(\lambda > 1\), then \(i(\lambda) \neq \sup i[\lambda]\).\qed
\end{lma}
\section{The uniqueness of embeddings on the ordinals}\label{SCHSection}
\subsection{Woodin's results}
\begin{thm}[Woodin]\label{thm:consistency}
    If it is consistent that there is a measurable cardinal,
    then it is consistent that the uniqueness of elementary embeddings fails.
    \begin{proof}
        Suppose \(\kappa\) is a measurable cardinal,
        and assume without loss of generality that \(2^\kappa = \kappa^+\).
        Let \(\langle \dot{\mathbb P}_{\alpha\beta} : \alpha \leq \beta < \kappa\rangle\)
        be the Easton support iteration where \(\dot {\mathbb P}_{\alpha\alpha}\) is trivial
        unless \(\alpha\) is an inaccessible non-Mahlo cardinal, in which case
        \(\dot {\mathbb P}_{\alpha\alpha} = \Add(\alpha,1)\).
        Thus \(\dot{\mathbb P}_{0\alpha}\) names a partial order in \(V\),
        which we denote by \(\mathbb P_{0\alpha}\).

        Let \(G\subseteq {\mathbb P}_{0\kappa}\)
        be a \(V\)-generic filter, let \(G_{0\alpha}\subseteq {\mathbb P}_{0\alpha}\)
        be the restriction of \(G\) to \({\mathbb P}_{0\alpha}\), let
        \(\mathbb P_{\alpha\beta} = (\mathbb P_{\alpha\beta})_{G_{0\alpha}}\),
        and let \(G_{\alpha\beta}\subseteq \mathbb P_{\alpha\beta}\) be the 
        \(V[G_{0\alpha}]\)-generic filter induced by \(G\).

        In \(V\), let \(U\) be a normal ultrafilter on \(\kappa\).
        We claim that in \(V[G]\), there are distinct normal ultrafilters \(U_0\) and \(U_1\)
        extending \(U\) such that \(\Ult(V[G],U_0) = \Ult(V[G],U_1)\).
        Let \(j : V\to M\) be the ultrapower of \(V\) by \(U\).
        Let 
        \[\langle \dot{\mathbb P}_{\alpha\beta} : \alpha \leq \beta < j(\kappa)\rangle
        = j(\langle \dot{\mathbb P}_{\alpha\beta} : \alpha \leq \beta < \kappa\rangle)\]

        Let \(\mathbb P_{\kappa, j(\kappa)} = (\dot {\mathbb P}_{\kappa, j(\kappa)})_{G}\).
        Then since \(M[G]\) is closed under \(\kappa\)-sequences in \(V[G]\),
        \(\mathbb P_{\kappa, j(\kappa)}\) is \({\leq}\kappa\)-closed.
        Moreover the set of maximal antichains of \(\mathbb P_{\kappa, j(\kappa)}\) 
        that belong to \(M[G]\) has cardinality \(\kappa^+\) in \(V[G]\).
        Therefore working in \(V[G]\), one can construct an \(M[G]\)-generic
        filter \(G_{\kappa,j(\kappa)}\subseteq \mathbb P_{\kappa, j(\kappa)}\).
        Note that \(j[G_{0\kappa}]\subseteq G_{0\kappa} * G_{\kappa,j(\kappa)}\).
        Letting \(H_0 = G_{0\kappa} * G_{\kappa,j(\kappa)}\), this implies that
        \(j\) extends to an elementary embedding \(j_0 : V[G]\to M[H_0]\)
        such that \(j_0(G) = H_0\).

        Notice that one obtains a second \(M[G]\)-generic filter 
        \(G_{\kappa,j(\kappa)}^*\subseteq \mathbb P_{\kappa,j(\kappa)}\)
        by flipping the bits of each component of \(G_{\kappa,j(\kappa)}\).
        Let \(H_1 = G_{0\kappa} * G_{\kappa,j(\kappa)}^*\).
        Obviously, \(M[H_0] = M[H_1]\).
        Moreover, \(j[G]\subseteq H_0\), so \(j\) extends to an elementary embedding 
        \(j_1 : V[G]\to M[H_1]\) such that \(j_1(G) = H_1\).
        Letting \(U_0\) and \(U_1\) be the normal ultrafilters of \(V[G]\) derived
        from \(j_0\) and \(j_1\) respectively,
        we have \(\Ult(V,U_0) = M[H_0] = M[H_1] = \Ult(V,U_1)\).
        Since \(j_0(G_0) \neq j_1(G_1)\), \(U_0\neq U_1\).

        In any case, \(j_0 : V[G]\to M[H_0]\) and \(j_1 : V[G]\to M[H_1]\)
        witness the failure of the uniqueness of elementary embeddings.
    \end{proof}
\end{thm}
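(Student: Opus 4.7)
The plan is to start from a measurable cardinal \(\kappa\) with a normal ultrafilter \(U\) and ultrapower \(j : V \to M\), and to force over \(V\) with a carefully chosen poset \(\mathbb P\) so that, in the generic extension \(V[G]\), the embedding \(j\) lifts to \(V[G]\) in two distinct ways landing in the \emph{same} model. This will automatically produce two normal ultrafilters \(U_0, U_1\) on \(\kappa\) in \(V[G]\) with a common ultrapower, witnessing failure of uniqueness.

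My candidate for \(\mathbb P\) is an Easton-support iteration adjoining a single Cohen subset \(\Add(\alpha,1)\) at each inaccessible non-Mahlo \(\alpha<\kappa\). The ``non-Mahlo'' restriction is crucial: since \(\kappa\) is measurable (hence Mahlo), the iteration is trivial at \(\kappa\), and the tail forcing \(\mathbb P_{\kappa,j(\kappa)}\) as computed in \(M[G]\) will be \({\leq}\kappa\)-closed there. To lift \(j\) to \(V[G]\), I would construct an \(M[G]\)-generic \(H\) for this tail, working in \(V[G]\), by a diagonal argument: assuming \(2^\kappa=\kappa^+\), the set of maximal antichains of \(\mathbb P_{\kappa,j(\kappa)}\) in \(M[G]\) has size at most \(\kappa^+\) in \(V[G]\), and the closure lets one meet them one at a time. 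Since the iteration is trivial at \(\kappa\), the condition \(j[G]\subseteq G * H\) is automatic, so the standard lifting criterion yields \(j_0 : V[G]\to M[G*H]\).

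The key observation that produces a \emph{second} lift with the same target is a symmetry of the forcing: because \(\mathbb P_{\kappa,j(\kappa)}\) is essentially a product of bit-valued Cohen forcings, flipping every bit of \(H\) coordinatewise gives another \(M[G]\)-generic \(H^*\) with \(M[G*H^*] = M[G*H]\). The lifting criterion applies again to produce \(j_1 : V[G]\to M[G*H^*]\). The derived normal ultrafilters \(U_0, U_1\) encode opposite bits of the generic as ``true,'' so they are distinct while their ultrapowers coincide.

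The step I expect to be hardest is choosing \(\mathbb P\) to make three requirements hold simultaneously: (i) \({\leq}\kappa\)-closure of the tail in \(M[G]\), needed for the diagonal construction; (ii) a small-enough antichain count in \(V[G]\), needed for the same; and (iii) a bit-flipping symmetry robust enough that \(H\) and \(H^*\) really generate the same outer model. The Cohen-at-non-Mahlo iteration is designed precisely to satisfy all three: (i) because nothing is forced at \(\kappa\) itself and \(M\) is \(\kappa\)-closed in \(V\); (ii) from \(2^\kappa=\kappa^+\) together with routine Easton bookkeeping; and (iii) because bit-flipping is a definable automorphism of each coordinate \(\Add(\alpha,1)\). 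Verifying that these properties survive the iteration, rather than just holding for a single factor, is where the bulk of the technical care must go.
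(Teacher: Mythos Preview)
Your proposal is correct and follows essentially the same approach as the paper: the Easton-support iteration of \(\Add(\alpha,1)\) at inaccessible non-Mahlo \(\alpha<\kappa\), the assumption \(2^\kappa=\kappa^+\), the diagonal construction of the tail generic using \({\leq}\kappa\)-closure, and the bit-flipping automorphism to produce a second lift with the same target are exactly the ingredients the paper uses. Your identification of the three simultaneous requirements and why this particular iteration satisfies them matches the paper's reasoning precisely.
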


\begin{thm}
    It is consistent that there exist distinct normal ultrafilters \(U_0\) and \(U_1\)
    with the same ultrapower \(M\) such that \(j_{U_0}(U_0) = j_{U_1}(U_1)\).
    \begin{proof}
        Let \(U\) be a normal ultrafilter 
        on a measurable cardinal \(\kappa\) and let \(j : V\to M\)
        denote its ultrapower. Assume \(2^\kappa = \kappa^+\). 
        Let \(\mathbb P\) be the Easton product
        \(\prod_{\delta\in A}\Add(\delta,1)\) where \(I\) is the set of
        inaccessible non-Mahlo cardinals less than \(\kappa\).

        Let \(\mathbb Q = j(\mathbb P)\) and let \(\mathbb Q/\mathbb P\)
        denote the product  \(\prod_{\delta\in j(I)\setminus \kappa}\Add(\delta,1)\)
        as computed in \(M\). Thus \(\mathbb Q \cong \mathbb P\times (\mathbb Q/\mathbb P)\).
        Since \(\kappa\notin j(I)\),  \(\mathbb Q/\mathbb P\) is \({\leq}\kappa\)-closed in \(M\), 
        and hence \(\mathbb Q/\mathbb P\)
        is \({\leq}\kappa\)-closed and in \(V\). Also \(\mathbb Q/\mathbb P\) is 
        \(j(\kappa)\)-cc in \(M\), and so one can enumerate the antichains
        \(\langle A_\alpha :\alpha < \kappa^+\rangle\) of \(\mathbb Q/\mathbb P\)
        that belong to \(M\) using that \(|j(\kappa)| =\kappa^+\).

        Fix a wellorder 
        \(\preceq\) of \(\mathbb Q/\mathbb P\), and let
        \(\langle p_\alpha : \alpha < \kappa^+\rangle\)
        be a continuous descending sequence in \(\mathbb Q/\mathbb P\)
        defined by letting \(p_{\alpha+1}\) be the \(\preceq\)-least
        element of \(\mathbb Q/\mathbb P\) below \(p_\alpha\)
        that belongs to \(A_\alpha\). Then 
        \(G = \{p\in \mathbb Q/\mathbb P :\exists \alpha\, p_\alpha\leq p\}\)
        is an \(M\)-generic filter.

        We denote by \(\sigma_\alpha\) the involution
        of \(\mathbb P\) that flips the bits of the Cohen sets
        added to cardinals above \(\alpha\). We overload notation
        by denoting the involution of \(\mathbb Q\)
        that flips the bits of the Cohen sets
        added to cardinals above \(\alpha\) in exactly the same way.

        Now we pass to a forcing extension: let \(H\subseteq \mathbb P\) be a \(V\)-generic filter.
        In \(V[H]\), we extend \(j\) in two different ways.
        Let \(j_0 : V[H]\to M[H\times G]\) be the unique extension of \(j\)
        such that \(j_0(H)= H\times G\). Let \(j_1 :V[H]\to M[H\times G]\)
        be the unique extension of \(j\) such that \(j_1(H) = \pi_\kappa(H\times G)\).

        Finally, a computation: 
        \begin{align*}j_1(j_1)(H\times G) &= j_1(j_1)(\sigma_\kappa(j_1(H)))\\
        &= \sigma_\kappa (j_1(j_1(H)))\\ 
        &= \sigma_\kappa (j_1(\sigma_\kappa(H\times G)))\\
        &= \sigma_\kappa\circ \sigma_{j(\kappa)} (j_1(H\times G))\\
        &= \sigma_\kappa\circ \sigma_{j(\kappa)} (\sigma_\kappa(H\times G)\times j_1(G))\\
        &= \sigma_\kappa\circ \sigma_{j(\kappa)} (\sigma_\kappa(H\times G)\times j_0(G))\\
        &= \sigma_\kappa \circ \sigma_\kappa(H\times G\times j_0(G))\\
        &= H\times G\times j_0(G)\\
        &= j_0(H)\times j_0(G)\\
        &= j_0(H\times G)\\
        &= j_0(j_0(H))\\
        &= j_0(j_0)(j_0(H))\\
        &= j_0(j_0)(H\times G)
        \end{align*}

        Since \(j_0(j_0)\) and \(j_1(j_1)\) agree on \(M\cup \{H\times G\}\),
        they agree on \(M[H\times G]\), as desired.
    \end{proof}
\end{thm}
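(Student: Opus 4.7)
The plan is to build on the construction from \cref{thm:consistency} and exploit symmetries of Cohen forcing to enforce the additional agreement $j_{U_0}(U_0) = j_{U_1}(U_1)$. First I would start with a measurable cardinal $\kappa$, a normal ultrafilter $U$, ultrapower $j : V \to M$, and assume $2^\kappa = \kappa^+$. The key structural change from the previous proof is to replace the Easton iteration by the \emph{Easton product} $\mathbb P = \prod_{\delta \in I} \Add(\delta,1)$ over the set $I$ of inaccessible non-Mahlo cardinals below $\kappa$. Products are far better behaved under the bit-flipping involutions we will need. In particular, $j(\mathbb P)$ splits as $\mathbb P \times (\mathbb Q / \mathbb P)$, where the tail $\mathbb Q/\mathbb P$ is ${\leq}\kappa$-closed (both in $M$ and in $V$) and $j(\kappa)$-cc in $M$.

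Using $|j(\kappa)| = \kappa^+$ and the ${\leq}\kappa$-closure, I would build an $M$-generic filter $G \subseteq \mathbb Q/\mathbb P$ inside $V$ by the usual diagonal construction along an enumeration of the $\kappa^+$-many $M$-antichains. Taking $H \subseteq \mathbb P$ to be $V$-generic, I would lift $j$ in two parallel ways on $V[H]$: one lifting $j_0$ determined by $j_0(H) = H \times G$, and another lifting $j_1$ determined by $j_1(H) = \sigma_\kappa(H \times G)$, where $\sigma_\kappa$ is the involution of $\mathbb Q$ that flips the Cohen bits at coordinates above $\kappa$. Both liftings exist because $j[H]$ lives entirely at coordinates below $\kappa$, which neither the tail filter nor $\sigma_\kappa$ disturb. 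The induced normal ultrafilters $U_0$ and $U_1$ on $\kappa$ in $V[H]$ are distinct because $j_0 \neq j_1$ on a name for $H$.

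The central task is to verify $j_0(U_0) = j_1(U_1)$ inside the common ultrapower $M[H \times G]$. Since each $U_i$ is the ultrafilter derived from $j_i$ at $\kappa$, the image $j_i(U_i)$ is the ultrafilter on $j(\kappa)$ in $M[H \times G]$ derived from the next iterate $j_i(j_i)$ applied to $j_i(H) = j_i(H)$. Both iterates $j_0(j_0)$ and $j_1(j_1)$ restrict to the same canonical double iterate of $j$ on $M$, so equality of the derived ultrafilters reduces to showing $j_0(j_0)$ and $j_1(j_1)$ agree on the single generator $H \times G$, and thence on $M[H \times G]$. I would verify this by an explicit computation that transforms $j_1(j_1)(H\times G)$ into $j_0(j_0)(H\times G)$ using three ingredients: (i) elementarity in the form $j_i(\sigma_\kappa) = \sigma_{j(\kappa)}$; (ii) $\sigma_\kappa \circ \sigma_\kappa = \mathrm{id}$; and (iii) the fact that $\sigma_\kappa$ and $\sigma_{j(\kappa)}$ act on disjoint coordinate blocks and therefore commute. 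The main obstacle is the algebraic bookkeeping of which flips are applied at which stage of the iteration; once the chain of identities resolves cleanly, agreement on $M \cup \{H \times G\}$ propagates to all of $M[H \times G]$ and yields $j_{U_0}(U_0) = j_{U_1}(U_1)$.
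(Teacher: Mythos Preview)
Your proposal is correct and follows essentially the same approach as the paper: the switch to an Easton product, the two lifts $j_0(H)=H\times G$ and $j_1(H)=\sigma_\kappa(H\times G)$, and the reduction of $j_0(U_0)=j_1(U_1)$ to the identity $j_0(j_0)(H\times G)=j_1(j_1)(H\times G)$ via the algebra of the involutions $\sigma_\kappa$ and $\sigma_{j(\kappa)}$. The paper carries out exactly this computation (using the same three ingredients you list) and concludes in the same way from agreement on $M\cup\{H\times G\}$.
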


Given this independence result, the following theorem is quite counterintuitive:
\begin{thm}[Woodin]\label{thm:HOD}
    Assume \(V = \HOD\). Then the uniqueness of elementary embeddings holds.
\end{thm}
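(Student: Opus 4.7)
The plan is to exploit $V = \HOD$ in two stages: first, to reduce uniqueness to agreement on the ordinals; second, to rule out any least point of disagreement by a rigidity argument. Fix the canonical definable wellorder $<_V$ of $V$ with enumeration $F \colon \Ord \to V$. By elementarity $M \models V = \HOD$, so the same formula defines a wellorder $<_M$ of $M$ with enumeration $F^M \colon \Ord \to M$, and for any elementary $j \colon V \to M$ one has $j(F(\alpha)) = F^M(j(\alpha))$. Consequently each $j_i$ is determined by its action on $\Ord$, and $j_i[V] = H^M(j_i[\Ord])$, the definable Skolem hull of $j_i[\Ord]$ in $M$. So it suffices to prove that $j_0$ and $j_1$ agree on every ordinal.

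Suppose not, and let $\alpha$ be the least ordinal with $j_0(\alpha) \neq j_1(\alpha)$; say $j_0(\alpha) < j_1(\alpha)$. The successor case propagates agreement since $j_i(\beta+1) = j_i(\beta)+1$, and if both $j_0$ and $j_1$ are continuous at $\alpha$ then $j_0(\alpha) = \sup j_0[\alpha] = \sup j_1[\alpha] = j_1(\alpha)$. So $\alpha$ is a limit at which at least one $j_i$ is discontinuous. By minimality of $\alpha$, $j_0$ and $j_1$ agree on the initial segment $A = F[\alpha]$, so $j_0[A] = j_1[A]$ as a common subset of $M$. However the set images are genuinely different: $j_0(A) = F^M[j_0(\alpha)]$ is a proper initial segment of $j_1(A) = F^M[j_1(\alpha)]$. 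The goal is now to force $j_0(\alpha) = j_1(\alpha)$ by expressing this ordinal as a $<_M$-definable function of data shared by both embeddings, such as the common Skolem hull $H^M(j_0[\alpha]) = H^M(j_1[\alpha])$ together with the set $j_0[A] = j_1[A]$.

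The principal obstacle is that the discontinuity makes $j_i(\alpha)$ a new generator of $j_i$, so it is neither in the common Skolem hull $H^M(j_0[\alpha]) = H^M(j_1[\alpha])$ nor recoverable from the pointwise image $j_i[A]$ alone (and indeed by \cref{lma:discontinuity} it cannot be). The proof will have to exploit the rigidity of the $\HOD$-enumeration in $M$ more sharply: because every element of $M$ is canonically indexed by $<_M$, the point $j_i(F(\alpha))$ must occupy a first-order-definable position in $<_M$ relative to the common data $j_i[A]$ and the wellorder $<_M$ itself. I expect Woodin's argument to make this definability precise — for instance, by showing that $F^M(j_i(\alpha))$ is the unique element of $M$ satisfying some reflection-style formula with parameters drawn from $j_i[A]$ — and then to conclude that its $<_M$-index, which is $j_i(\alpha)$, is independent of $i$.
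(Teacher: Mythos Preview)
Your first reduction is correct: under $V = \HOD$, the canonical enumeration $F$ makes every set definable from its ordinal index, so agreement on ordinals suffices. But the second half is not a proof. You correctly identify that at the least point of disagreement $\alpha$ the value $j_i(\alpha)$ is a new generator and hence not in $H^M(j_i[\alpha])$, and then you speculate that some unspecified ``reflection-style formula'' in $<_M$ will pin it down from $j_i[A]$. No such formula exists in general: the whole content of discontinuity is that $j_i(\alpha)$ carries information not determined by $j_i\restriction\alpha$, and the $\HOD$ wellorder on $M$ does nothing to change this. Your proposal stops exactly where the real work begins.

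The paper does not prove this theorem directly --- it is stated as Woodin's result and is then superseded by \cref{thm:unique_ords}, which establishes agreement on the ordinals with no $V = \HOD$ hypothesis, via a reduction to ultrapower embeddings (\cref{thm:ultrapowers}, \cref{cor:almost_ultra}) and thence to the definable case (\cref{thm:Woodin}); combined with your first step, that yields the theorem. Woodin's own argument under $V = \HOD$ rests on a completely different idea, Solovay's lemma: for any regular $\gamma$ and any partition $\vec S = \langle S_\beta : \beta < \lambda\rangle$ of $S^\gamma_\omega$ into stationary sets, the pointwise image $j[\lambda]$ is recoverable from $j(\vec S)$ and $\sup j[\gamma]$ alone, as the set of indices $\beta$ for which $j(\vec S)_\beta$ reflects to $\sup j[\gamma]$. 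Under $V = \HOD$ one may take $\vec S$ to be the $\HOD$-least such partition, hence definable from $\lambda$ and $\gamma$ alone; choosing $\lambda$ a common fixed point of $j_0,j_1$ and $\gamma = \lambda^+$ forces $j_0(\vec S) = j_1(\vec S)$ and $\sup j_0[\gamma] = \sup j_1[\gamma] = \lambda^+$, whence $j_0[\lambda] = j_1[\lambda]$. The missing idea is not a sharper use of the wellorder on the target model, but the encoding of $j[\lambda]$ by ordinal-definable stationary combinatorics on the source.
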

It is worth pondering why one cannot refute this theorem by first forcing the failure of 
the uniqueness of elementary embeddings as in \cref{thm:consistency} and then 
forcing \(V = \HOD\) by some highly closed coding forcing.
For definable elementary embeddings, Woodin proved more:
\begin{thm}[Woodin]\label{thm:Woodin}
    Suppose \(j_0,j_1 : V\to M\) are definable elementary embeddings from the universe into the same inner model.
    Then for every ordinal \(\alpha\), \(j_0(\alpha) = j_1(\alpha)\).
    \begin{proof}
        Fix a number \(n\) (in the metatheory), and we will prove the theorem
        for \(\Sigma_n\)-definable elementary embeddings. Towards a contradiction,
        let \(\alpha\) be the least ordinal such that there exist \(\Sigma_n\)-definable
        elementary embeddings \(j_0,j_1 : V\to M\) such that \(j_0(\alpha) \neq j_1(\alpha)\).
        Notice that \(\alpha\) is definable in \(V\) without parameters. Therefore
        if \(k_0,k_1 :V\to N\) are elementary embeddings, then \(k_0(\alpha)\)
        is the unique ordinal defined in \(N\) by the formula defining \(\alpha\) in \(V\),
        and similarly for \(k_1(\alpha)\). Hence \(k_0(\alpha) = k_1(\alpha)\), contradicting
        the definition of \(\alpha\).
    \end{proof}
\end{thm}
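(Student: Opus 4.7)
The plan is to treat the assertion as a theorem scheme: for each $n$ fixed in the metatheory, I will prove that any two $\Sigma_n$-definable elementary embeddings $j_0, j_1 : V \to M$ agree on every ordinal. The natural move is a least-counterexample argument in which the counterexample ordinal itself turns out to be parameter-free definable, so that elementarity forces both embeddings to send it to the same image.

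First, I would nail down the ambient definability. The $\Sigma_n$ satisfaction predicate is uniformly $\Sigma_n$ in $V$, so for any fixed $\Sigma_n$ formula $\varphi(x,y)$ the assertion ``$\varphi$ defines a function $j_\varphi$ which is an elementary embedding from $V$ into an inner model'' is expressible. In particular, given two such $\Sigma_n$ formulas $\varphi_0$ and $\varphi_1$ with the same target inner model, the statement ``$j_{\varphi_0}(\alpha) \neq j_{\varphi_1}(\alpha)$'' is a definable property of $\alpha$ (with $\varphi_0,\varphi_1$ as parameters from $\omega$).

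Assuming the scheme fails, let $\alpha$ be the least ordinal for which there exist $\Sigma_n$ formulas $\varphi_0, \varphi_1$ defining elementary embeddings from $V$ into a common inner model $M$ with $j_{\varphi_0}(\alpha) \neq j_{\varphi_1}(\alpha)$. By the previous paragraph, $\alpha$ is definable in $V$ by a formula $\psi(x)$ without parameters (its complexity is bounded by some function of $n$, but this does not matter). Fix any two $\Sigma_n$-definable elementary embeddings $k_0, k_1 : V \to N$ into any common model. Since $\alpha$ is the unique ordinal satisfying $\psi$ in $V$, elementarity of $k_0$ shows $k_0(\alpha)$ is the unique ordinal in $N$ satisfying $\psi$, and similarly for $k_1(\alpha)$. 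Hence $k_0(\alpha) = k_1(\alpha)$, applied in particular to the supposed witnessing pair for $\alpha$, a contradiction.

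The only place where I expect to have to be careful is the preliminary bookkeeping: confirming that the phrase ``$\Sigma_n$-definable elementary embedding into an inner model'' can be internalized uniformly in $n$ using the $\Sigma_n$ truth predicate, so that the minimization defining $\alpha$ really is expressible by a formula without parameters. Once that is in place, the argument is just Tarski-style self-reference tamed by the fact that we only need $\Sigma_n$-truth, not full truth, at each fixed level of the scheme.
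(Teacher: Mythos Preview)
Your proposal is correct and follows essentially the same argument as the paper: treat the result as a scheme in $n$, take the least ordinal $\alpha$ witnessing a disagreement between some pair of $\Sigma_n$-definable embeddings, observe that $\alpha$ is parameter-free definable, and derive a contradiction from elementarity. Your extra care about internalizing ``$\Sigma_n$-definable elementary embedding into an inner model'' via the $\Sigma_n$ truth predicate is exactly the bookkeeping the paper leaves implicit.
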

\subsection{Uniqueness of embeddings on the ordinals}
\cref{thm:Woodin} raises an interesting second-order question. Working in 
second-order set theory, suppose \(j_0,j_1 : V\to M\) are elementary embeddings. 
Must \(j_0\) and \(j_1\) agree on the ordinals?
Woodin conjectured that the answer is yes. Here we verify his conjecture.

\begin{thm}\label{thm:unique_ords}
    Any two embeddings from the universe of sets into the same inner model
    agree on the ordinals.
\end{thm}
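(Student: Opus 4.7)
The plan is to adapt the machinery of Solovay's lemma (used in the HOD-Hypothesis version of Woodin's theorem, alluded to in the commented-out part of \cref{thm:Woodin}), but to circumvent the use of the Strong HOD Hypothesis. Fix two elementary embeddings \(j_0, j_1 : V \to M\). Since both embeddings are strictly increasing on ordinals, showing \(j_0 \restriction \Ord = j_1 \restriction \Ord\) reduces to showing \(j_0[\delta] = j_1[\delta]\) for every cardinal \(\delta\). The strategy is: for each such \(\delta\), find a regular cardinal \(\mu > \delta\) and a partition \(\vec{S} = \langle S_\alpha : \alpha < \delta\rangle\) of \(\{\beta < \mu : \cf(\beta) = \omega\}\) into stationary sets with the two key properties (a) \(\sup j_0[\mu] = \sup j_1[\mu]\), and (b) \(j_0(\vec{S}) = j_1(\vec{S})\). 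Solovay's lemma then identifies \(j_0[\delta]\) and \(j_1[\delta]\) as the same set of ``stationarily realized'' indices in \(j_0(\delta) = j_1(\delta)\), completing the proof.

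\textbf{Finding coincidence cardinals.} Property (a) is easy to arrange on a closed unbounded class: starting from any \(\mu_0\), iterate \(\mu_{n+1} = \max(j_0(\mu_n), j_1(\mu_n))\) and take \(\mu = \sup_n \mu_n\); taking successors gives arbitrarily large \(\mu = \lambda^+\) of the required form with \(j_0(\mu) = j_1(\mu)\) and \(\sup j_0[\mu] = \sup j_1[\mu]\). This supplies a proper class of candidate \(\mu\)'s at which Solovay's lemma may be applied.

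\textbf{The main obstacle.} Property (b) is the hard part. In Woodin's proof, \(\vec{S}\) was taken to be the \(\preceq_{\OD}\)-least partition of \(S^\mu_\omega\) into \(\delta\) stationary pieces; since this \(\vec{S}\) is definable without parameters, any two embeddings automatically agree on it. Without the Strong HOD Hypothesis, no such OD partition need exist, and we have no canonical choice in sight. My plan here is to proceed by transfinite induction on \(\delta\): assuming by induction that \(j_0\restriction\delta' = j_1\restriction\delta'\) for all \(\delta' < \delta\), one hopes to produce \(\vec{S}\) from parameters on which \(j_0\) and \(j_1\) must therefore agree. A natural candidate is to code \(\vec{S}\) by a subset of some smaller cardinal on which the inductive hypothesis already forces agreement, but this runs into the problem that \(j(x)\) is not in general determined by \(j \restriction \text{rk}(x)\).

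\textbf{Where the difficulty lies.} The main obstacle is therefore the parameterization: one must identify a class of set parameters preserved by \emph{every} \(j : V \to M\), rich enough to code a partition of \(S^\mu_\omega\) of every prescribed length \(\delta < \mu\). A plausible route is to work inside \(M\), where both \(j_0\) and \(j_1\) have a common target: choose the partition first as an object \(\vec{T} \in M\) and refine it using the intersection \(j_0(\vec{S}_0) \wedge j_1(\vec{S}_1)\) for arbitrary \(V\)-partitions \(\vec{S}_0, \vec{S}_1\), trying to argue that the \(M\)-level common refinement must trace back to a single partition in \(V\). Executing this — and in particular pulling back \(M\)-internal objects through two different embeddings simultaneously — is, I expect, the crux of the argument.
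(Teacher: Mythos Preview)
Your proposal has a genuine gap, and it is precisely the one you yourself flag: there is no clear way to produce a stationary partition \(\vec{S}\) with \(j_0(\vec{S}) = j_1(\vec{S})\) absent an ordinal-definable choice. The inductive idea does not close the gap, because agreement on \(j_0\restriction\delta' = j_1\restriction\delta'\) for all \(\delta' < \delta\) tells you nothing about \(j_0(x)\) versus \(j_1(x)\) for a set \(x\) coding a partition of \(S^\mu_\omega\); such an \(x\) has rank far above \(\delta\), and as you observe, \(j(x)\) is not determined by \(j\restriction\text{rk}(x)\), let alone by \(j\restriction\delta\). The ``pullback from \(M\)'' idea in your last paragraph is vague and would require exactly the kind of simultaneous control over \(j_0\) and \(j_1\) on sets (not just ordinals) that you are trying to establish.

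The paper's proof takes an entirely different route, avoiding Solovay's stationary-splitting lemma altogether. The key moves are: (1) a reduction lemma (\cref{thm:ultrapowers}) showing that any pair \(j_0,j_1 : V\to M\) factors through a pair of \emph{almost ultrapower} embeddings \(i_0,i_1 : V\to N\) via a common \(k : N\to M\); (2) under the eventual SCH, almost ultrapower embeddings are genuine ultrapower embeddings (\cref{cor:almost_ultra}), hence definable, so Woodin's elementary argument (\cref{thm:Woodin}) applies; and (3) a bootstrap showing the eventual SCH can be assumed for free: if \(j_0,j_1\) disagree at some ordinal \(\alpha\), then for the \(\alpha\)-th strong limit singular \(\lambda\) of countable cofinality with \(2^\lambda > \lambda^+\), one gets \(j_0(\lambda^+) < j_1(\lambda^+)\), which via a tightness argument forces \(j_1\) to be discontinuous at \(\lambda^+\), whence Solovay's SCH argument (\cref{lma:solovay_SCH}) gives \(2^\lambda = \lambda^+\), a contradiction. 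So the ``hard parameter'' problem you identify is sidestepped by reducing to definable embeddings rather than by finding invariant partitions.
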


Roughly speaking, we proceed by reducing the question
to the case of definable embeddings (in fact, ultrapower embeddings).

\begin{defn}
    An elementary embedding \(j : V\to M\) is \textit{almost an ultrapower embedding}
    if for every set \(B\subseteq M\), there is some \(a\in M\) such that
    \(B\subseteq H^M(j[V]\cup \{a\})\).
\end{defn}
I am grateful to Moti Gitik for pointing out an error in
the proof of this theorem that appeared in 
an early draft of this paper, which
has now been corrected:
\begin{thm}\label{thm:ultrapowers}
    Suppose \(j_0,j_1 : V\to M\) are elementary embeddings.
    Then there exist elementary embeddings \(i_0,i_1 : V\to N\)
    and an elementary embedding \(k : N\to M\)
    such that \(i_0\) and \(i_1\) are almost ultrapower embeddings
    and \(j_0 = k\circ i_0\) and \(j_1 = k\circ i_1\).
    \begin{proof}
        Suppose \(j_0,j_1 : V\to M\) are elementary embeddings.
        Let \(X = H^M(j_0[V]\cup j_1[V])\),
        let \(N\) be the transitive collapse of \(X\),
        and let \(k : N\to M\) be the inverse of the transitive collapse map.
        Let \(i_0,i_1 : V\to N\) be the collapses of \(j_0,j_1\);
        that is \(i_0 = k^{-1}\circ j_0\) and \(i_1 = k^{-1}\circ j_1\).
        
        We claim that for all sets \(A\), there is a point \(g\in i_1[V]\)
        such that \(i_1[A]\subseteq H^N(i_0[V]\cup \{g\})\).
        To see this, let \(B\) be a set
        of cardinality \(|A|\) such that \(i_0\restriction B = i_1\restriction B\).
        Note that such a set exists because \(i_0\) and \(i_1\)
        have an \(\omega\)-closed unbounded class of common fixed points.
        Let \(f : B\to A\)
        be a surjection.
        Let \(g = i_1(f)\). 
        For all \(a\in A\), \(a = f(b)\) for some
        \(b \in B\), and so 
        \[i_1(a) = i_1(f)(i_1(b)) = g(i_0(b)) \in H^N(i_0[V]\cup \{g\})\]
        Thus \(i_1[A]\subseteq H^N(i_0[V]\cup \{g\})\).

        We now show that \(i_0\) is almost an ultrapower embedding. Fix \(B\subseteq N\).
        Since \(N = H^N(i_0[V]\cup i_1[V])\),
        there is a set \(A\) such that \(B\subseteq H^N(i_0[V]\cup i_1[A])\).
        The previous paragraph yields \(g\in i_1[V]\) such that
        \(i_1[A]\subseteq H^N(i_0[V]\cup \{g\})\). Hence
        \(B\subseteq H^N(i_0[V]\cup \{g\})\), as desired.
    \end{proof}
\end{thm}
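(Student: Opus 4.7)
The natural construction is to let $N$ be the transitive collapse of the hull $X = H^M(j_0[V] \cup j_1[V])$, with $k \colon N \to M$ the inverse of the collapse map. Then $i_0 = k^{-1}\circ j_0$ and $i_1 = k^{-1}\circ j_1$ are elementary embeddings $V \to N$ satisfying $j_0 = k\circ i_0$ and $j_1 = k\circ i_1$ automatically. By construction $N = H^N(i_0[V] \cup i_1[V])$, so in particular every set $B \subseteq N$ lies in $H^N(i_0[V] \cup i_1[A])$ for some set $A$. The remaining task is to show that $i_0$ (and symmetrically $i_1$) is almost an ultrapower embedding, which reduces to the following compression lemma: for every set $A$, there is a single element $g \in N$ such that $i_1[A] \subseteq H^N(i_0[V] \cup \{g\})$.

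To prove the compression lemma, I would exploit the fact that the two embeddings $i_0$ and $i_1$, being elementary embeddings of $V$ into $N$, share a proper class of common fixed points (for instance, an $\omega$-club of cardinals closed under both). Thus I can fix a set $B$ of cardinality $|A|$ on which $i_0$ and $i_1$ literally agree pointwise. Fixing a surjection $f \colon B \to A$ and letting $g = i_1(f)$, an arbitrary element $a = f(b) \in A$ satisfies
\[
i_1(a) = i_1(f)(i_1(b)) = g(i_0(b)),
\]
since $i_1(b) = i_0(b)$. Hence $i_1(a) \in H^N(i_0[V] \cup \{g\})$, proving the lemma.

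Combining these, given any $B \subseteq N$, choose $A$ with $B \subseteq H^N(i_0[V] \cup i_1[A])$ and then apply the compression lemma to obtain $g$ with $i_1[A] \subseteq H^N(i_0[V] \cup \{g\})$; it follows that $B \subseteq H^N(i_0[V] \cup \{g\})$, so $i_0$ is almost an ultrapower embedding, as desired.

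The only delicate point is the compression lemma, and the main obstacle is seeing how to absorb a whole set $i_1[A]$ (which could have any cardinality) into a single generator over $i_0[V]$. The key insight is that one does not need $i_0$ and $i_1$ to agree on all of $A$ — it suffices that they agree on some surjective preimage $B$ of $A$, since then a single function $f \colon B \to A$ transported across $i_1$ names every element of $i_1[A]$ using parameters drawn from $i_0[V]$. Having common fixed points of the right cardinality is exactly what makes such a $B$ available.
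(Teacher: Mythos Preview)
Your proof is correct and follows essentially the same approach as the paper: the same hull-and-collapse construction, the same compression lemma using a set $B$ of common fixed points of the right cardinality and a surjection $f\colon B\to A$ with $g=i_1(f)$, and the same final combination step. The only cosmetic difference is that the paper notes $g\in i_1[V]$ rather than merely $g\in N$, but since you take $g=i_1(f)$ this is already implicit in your argument.
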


Under favorable cardinal arithmetic hypotheses, one can remove the word ``almost''
in the statement of the previous theorem.
We will say here that the \textit{eventual singular cardinals hypothesis} (eventual SCH) holds
if for all sufficiently large strong limit cardinals
\(\lambda\) of cofinality \(\omega\), \(2^\lambda = \lambda^+\).
(By Silver's theorem, this also implies \(2^\lambda = \lambda^+\) for
strong limit singular cardinals of uncountable cofinality, but we will not need this,
and this form will be more convenient. Our eventual SCH is a bit weaker than
the more natural version asserting that for all sufficiently large
singular \(\lambda\), \(\lambda^{\cf(\lambda)} = 2^{\cf(\lambda)}\cdot \lambda^+\).)
\begin{lma}[Eventual SCH]\label{lma:extender}
    Any elementary embedding from the universe into an inner model
    closed under \(\omega\)-sequences is an extender embedding.
    \begin{proof}
        Suppose not, and fix an elementary embedding \(j : V\to M\) 
        such that \(M\) is closed under \(\omega\)-sequences but \(j\) is not
        an extender embedding.
        Then the class \(\{\lambda_j(a) : a\in M\}\) is unbounded. (See \cref{defn:lambda}.)
        Otherwise, let \(\gamma\) be its supremum. Then
        \[M = H^M(j[V]\cup j(\gamma))\] 
        contrary to the fact that
        \(j\) is not an extender embedding.

        Let \(\eta\) be such that for all strong limit cardinals
        \(\lambda > \eta\) of countable cofinality, \(2^\lambda = \lambda^+\).
        By recursion, construct a sequence of points \(a_n\in M\)
        such that \(\lambda_j(a_0) > \eta\), 
        \(\lambda_j(a_{n+1}) > j(2^{\lambda_j(a_n)})\).
        Let \(\lambda = \sup_{n < \omega} \lambda_j(a_n)\).
        
        Note that \(\lambda\) is a strong limit cardinal
        of countable cofinality, so \(2^\lambda = \lambda^+\). 
        Moreover, \(j[\lambda]\subseteq\lambda\),
        and so since \(j(\omega) = \omega\), \(j(\lambda) = \lambda\).
        In particular, \(j\) is continuous at \(\lambda\).
        Also \(j(\lambda^+) = (j(\lambda)^+)^M \leq \lambda^+\).
        In particular, \(j\) is continuous at \(\lambda^+\).
        
        Let \(a = \langle a_n : n < \omega\rangle\).
        Then \(a\in M\). In fact, since \(a\subseteq H(j[V]\cup j(\lambda))\), 
        \(a\in H(j[V]\cup j({}^\omega\lambda))\).
        So \(\lambda_j(a)\leq \lambda^\omega = \lambda^+\).
        On the other hand since \(\langle \lambda_j(a_n) : n < \omega\rangle\)
        is cofinal in \(\lambda\), \(\lambda_j(a)\geq \lambda\).
        Thus \(\lambda_j(a)\) is either \(\lambda\) or \(\lambda^+\).
        Since \(j\) is continuous at \(\lambda\) and \(\lambda^+\),
        this contradicts \cref{lma:discontinuity}.
    \end{proof}
\end{lma}

\begin{cor}[Eventual SCH]\label{cor:almost_ultra}If \(j : V\to M\) is almost an ultrapower embedding,
    then \(j\) is an ultrapower embedding.
    \begin{proof}
        It is easy to see that \(M\) is closed under \(\omega\)-sequences,
        and therefore \cref{lma:extender} implies that \(j\) is an extender embedding.
        Essentially by definition, an extender embedding that is almost an ultrapower embedding
        is indeed an ultrapower embedding.
    \end{proof}
\end{cor}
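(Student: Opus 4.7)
The plan follows the two-sentence strategy hinted at just after the corollary statement: first show that $M$ is closed under $\omega$-sequences; then apply \cref{lma:extender} to conclude that $j$ is an extender embedding; finally combine the extender structure with the almost-ultrapower hypothesis to collapse the generating set to a single point.

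For $\omega$-closure, fix $\vec b = \langle b_n : n < \omega\rangle \in M^\omega$ and set $B = \{b_n : n < \omega\}$. The almost-ultrapower hypothesis applied to $B$ supplies some $a \in M$ with $B \subseteq H^M(j[V] \cup \{a\})$, so for each $n$ we may pick $f_n \in V$ with $b_n = j(f_n)(a)$. The sequence $\langle f_n : n < \omega\rangle$ exists in $V$, so there we may form $F(x) := \langle f_n(x) : n < \omega\rangle$; then by elementarity $j(F)(a) = \langle b_n : n < \omega\rangle$, so $\vec b \in M$.

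Now \cref{lma:extender} delivers that $j$ is an extender embedding. By the paragraph preceding \cref{defn:lambda}, the generators of $j$ form a set $G$, and $M = H^M(j[V] \cup G)$. Applying the almost-ultrapower property one more time, this time to $G$, produces $a \in M$ such that $G \subseteq H^M(j[V] \cup \{a\})$; consequently $M = H^M(j[V] \cup G) \subseteq H^M(j[V] \cup \{a\}) \subseteq M$, so $M = H^M(j[V] \cup \{a\})$. Since $j$ is cofinal into a wellfounded model, $\lambda_j(a)$ is defined, meaning $a \in j(X)$ for some $X \in V$, so $j$ is an ultrapower embedding. I do not anticipate any serious obstacle; the only point that needs care is recognising that once $G$ has been swallowed by the hull generated over $j[V]$ by $\{a\}$, the chain of inclusions above collapses to an equality.
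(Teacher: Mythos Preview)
Your argument is correct and follows exactly the route the paper takes, simply filling in the details behind the phrases ``easy to see'' and ``essentially by definition.'' The $\omega$-closure argument via $j(F)(a)=\vec b$ and the collapse of $H^M(j[V]\cup G)$ into $H^M(j[V]\cup\{a\})$ are precisely what the paper intends.
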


\begin{cor}[Eventual SCH]\label{cor:SCH}If \(j_0,j_1 : V\to M\) are
    elementary embeddings, then \(j_0\restriction \Ord = j_1\restriction \Ord\).
    \begin{proof}
        Apply \cref{thm:ultrapowers} to reduce to the case that \(j_0\) and \(j_1\)
        are almost ultrapowers. Then apply \cref{cor:almost_ultra}
        to conclude that they are in fact ultrapower embeddings.
        Finally, since ultrapower embeddings are definable, apply Woodin's
        theorem (\cref{thm:Woodin}) to conclude the corollary.
    \end{proof}
\end{cor}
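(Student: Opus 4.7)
The strategy is to reduce, via \cref{thm:ultrapowers}, to the case of two almost ultrapower embeddings $i_0, i_1 : V \to N$ into a common model, and then to argue that the almost-ultrapower hypothesis already brings us within reach of Woodin's definability theorem (\cref{thm:Woodin})---thereby bypassing the eventual SCH hypothesis used in \cref{cor:SCH}. Given $j_0, j_1 : V \to M$, applying \cref{thm:ultrapowers} provides an elementary $k : N \to M$ and almost ultrapower embeddings $i_0, i_1 : V \to N$ with $j_n = k \circ i_n$; since $k$ is injective on ordinals, it suffices to prove $i_0 \restriction \Ord = i_1 \restriction \Ord$.

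\textbf{Local definability.} The idea I would pursue is that the almost-ultrapower hypothesis already makes each restriction $i_n \restriction \alpha$ definable from set-sized data. Fix any ordinal $\alpha$ and apply the almost-ultrapower property of $i_0$ to $B = i_0[\alpha] \cup i_1[\alpha]$, obtaining $a \in N$ with $B \subseteq H^N(i_0[V] \cup \{a\})$. The derived $V$-ultrafilter $U = \{A \in V : a \in i_0(A)\}$ is a set; $j_U : V \to \Ult(V, U)$ is a definable ultrapower embedding of $V$; and there is a canonical factor $\ell : \Ult(V, U) \to N$ with $\ell([\mathrm{id}]_U) = a$ and $i_0 = \ell \circ j_U$. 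Both $i_0 \restriction \alpha$ and $i_1 \restriction \alpha$ lie in $\ell[\Ult(V, U)]$, so they pull back through $\ell$ to elements of the definable class $\Ult(V, U)$, each of the form $j_U(f_n)(a)$ for some $f_n \in V$. Thus the disagreement $i_0(\alpha) \ne i_1(\alpha)$ is encoded by $V$-side functions $f_0 \ne f_1$.

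\textbf{Global definability and Woodin.} The final step is to promote this per-$\alpha$ encoding to two globally definable elementary embeddings $V \to N$ that disagree on an ordinal, so that \cref{thm:Woodin} delivers a contradiction. The natural plan is to fix a minimal $\alpha$ of disagreement, run the construction above, and use the definable embedding $i_0^\ast := \ell \circ j_U : V \to N$ together with a second embedding $i_1^\ast : V \to N$ constructed from the $f_1$-data (and agreeing with $i_1$ up to $\alpha$). Since $U$, $a$, and the $f_n$'s are set parameters, both $i_0^\ast$ and $i_1^\ast$ should be definable in $V$ from these parameters, and they share the codomain $N$; \cref{thm:Woodin} then forces $i_0^\ast(\alpha) = i_1^\ast(\alpha)$, contradicting the chosen disagreement.

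\textbf{Main obstacle.} The hardest part, I expect, is making $i_1^\ast$ an honest elementary embedding with the right target. The pullback analysis only produces $i_1 \restriction \alpha$ as a set-sized function with a definable representative; assembling these representatives into a \emph{total}, elementary, $V$-definable map $V \to N$ likely requires iterating the construction along a cofinal class of thresholds $\alpha$, proving a coherence property that the choices of $(U, a, f_1)$ fit into a directed system, and passing to a direct limit. In effect, this amounts to replacing the eventual SCH hypothesis of \cref{lma:extender} by the almost-ultrapower hypothesis itself---the latter being precisely what \cref{thm:ultrapowers} guarantees for free.
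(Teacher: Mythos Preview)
Your plan diverges from the paper in a crucial way: you explicitly try to \emph{bypass} the eventual SCH hypothesis, but that hypothesis is precisely what the corollary assumes and what the paper exploits. The paper's argument is short: \cref{thm:ultrapowers} yields almost-ultrapower $i_0,i_1 : V \to N$; eventual SCH (via \cref{cor:almost_ultra}) upgrades these to genuine ultrapower embeddings; ultrapower embeddings are definable from a set parameter (the ultrafilter), so \cref{thm:Woodin} applies directly. You never invoke \cref{cor:almost_ultra}, so you are in effect attempting to prove the harder \cref{thm:unique_ords} rather than \cref{cor:SCH}.

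Your attempted bypass also has a real gap. You call $i_0^* := \ell \circ j_U$ ``the definable embedding,'' but $\ell \circ j_U$ is literally $i_0$: the factor map is given by $\ell([f]_U) = i_0(f)(a)$, which already uses the class $i_0$. Thus $\ell$, and hence $i_0^*$, is not definable from the set parameters $U$, $a$, $f_0$, $f_1$ alone. The same obstruction blocks any construction of $i_1^*$: pulling $i_1\restriction\alpha$ back through $\ell$ produces a set in $M_U$, not a second elementary embedding of $V$ into $N$. Your direct-limit suggestion does not resolve this either: a directed system of ultrapowers gives a single limit embedding into a single limit model, not two definable embeddings into a common target, and arranging coherence of the ``$f_1$-data'' across stages would amount to already having $i_1$ globally in hand.

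For \cref{cor:SCH}, just use the hypothesis. The hypothesis-free statement is \cref{thm:unique_ords}, and the paper proves that by an entirely different route (Solovay's SCH argument from discontinuity at successors of singulars), not by assembling local definability data.
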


We now turn to the proof of the uniqueness of elementary embeddings on the ordinals
without SCH, 
for which it is convenient to introduce the notion of 
the \textit{tightness function} of an elementary embedding.
\begin{defn}
    Suppose \(j : V\to M\) is an elementary embedding and \(X\) is a set.
    Then \(\tr{j}{X}\) denotes the minimum \(M\)-cardinality
    of a set \(A\in M\) such that \(j[X]\subseteq A\).
\end{defn}
The tightness function turns out to depend only on the cardinality of its argument:
\begin{lma}\label{lma:j_cover}
    Suppose \(j : V\to M\) is an elementary embedding.
    If \(|X|\leq |Y|\), then \(\tr{j}{X} \leq \tr{j}{Y}\).
    \begin{proof}
        Let \(f : Y\to X\) be a surjection. For any \(A\in M\), if
        \(j[Y]\subseteq A\), then \(j[X]\subseteq j(f)[A]\). As a consequence 
        \(\tr{j}{X} \leq \tr{j}{Y}\).
    \end{proof}
\end{lma}
We therefore will focus on \(\tr{j}{\lambda}\) where \(\lambda\) is a cardinal.

We want to get into the situation where we can apply \cref{cor:almost_ultra},
and for this we need Solovay's argument proving SCH above a strongly compact cardinal.
\begin{lma}[Solovay]\label{lma:solovay_SCH}
    Suppose \(\lambda\) is a singular strong limit cardinal of countable
    cofinality and there is an elementary embedding \(j : V\to M\)
    such that \(j\) is discontinuous at \(\lambda^+\). Then \(2^\lambda = \lambda^+\).
    \begin{proof}[Sketch]
        We may assume that \(j\) is the ultrapower of the universe by an ultrafilter on \(\lambda^+\).
        Note that \(\tr{j}{\lambda^+} = \cf^M(\sup j[\lambda^+]) < j(\lambda)\).
        Also \(\tr{j}{\lambda^\omega} = (\tr{j}{\lambda})^\omega 
        \leq (\tr{j}{\lambda^+})^\omega < j(\lambda)\).
        Assume towards a contradiction that \(\lambda^\omega > \lambda^+\).
        Then \(\tr{j}{\lambda^{++}} \leq \tr{j}{\lambda^\omega}< j(\lambda)\).
        But this implies \(j\) is discontinuous at \(\lambda^{++}\), which contradicts that 
        \(j\) is the ultrapower of the universe by an ultrafilter on \(\lambda^+\).
    \end{proof}
\end{lma}
A more complete proof appears in \cite[Lemma 7.2.18]{UA}.

\begin{proof}[Proof of \cref{thm:unique_ords}]
        We start with a simple observation.
        Suppose \(\delta\) is a regular cardinal such that \(j_0(\delta) < j_1(\delta)\).
        Then \(j_1\) is discontinuous at \(\delta\).
        To see this, let \(X\) be a set of common fixed points of \(j_0\) and \(j_1\) such that
        \(|X| = \delta\).
        Then \(j_1[X] = j_0[X]\) is covered by \(j_0(X)\), which has size \(j_0(\delta)\) in \(M\).
        By \cref{lma:j_cover}, 
        \(j_1[\delta]\) is covered by a set \(B\in M\) such that \(|B|^M = j_0(\delta)\).
        It follows that \(\sup j_1[\delta] \neq j_1(\delta)\): otherwise \(\cf^M(j_1(\delta)) 
        = j_0(\delta)\), contradicting that \(\delta\) is regular.

        Assume towards a contradiction that
        there is an ordinal \(\alpha\) such that \(j_0(\alpha) \neq j_1(\alpha)\).
        Without loss of generality, assume \(j_0(\alpha) < j_1(\alpha)\).

        Assume towards a contradiction that for cofinally many strong limit cardinals \(\lambda\)
        of countable cofinality, \(2^\lambda > \lambda^+\).
        Let \(\lambda\) be the \(\alpha\)-th strong limit cardinal of countable cofinality
        for which \(2^\lambda > \lambda^+\). Then \(j_0(\lambda)\)
        is the \(j_0(\alpha)\)-th such cardinal in \(M\), and \(j_1(\lambda)\)
        is the \(j_1(\alpha)\)-th. Hence \(j_0(\lambda) < j_1(\lambda)\). As a consequence,
        \(j_0(\lambda^+) < j_1(\lambda^+)\). So \(j_1\) is discontinuous
        at \(\lambda^+\) by the claim. It therefore follows by \cref{lma:solovay_SCH} that
        \(2^\lambda = \lambda^+\), which is a contradiction.

        Applying \cref{cor:SCH}, \(j_0\restriction \Ord  = j_1\restriction \Ord\),
        contrary to assumption. 
    \end{proof}
\section{The uniqueness of embeddings above large cardinals}\label{ExtendibleSection}
Intuitively, an ultrafilter \(U\) on a set \(X\) is a ``generalized element'' of \(X\).
In this section, we study the generalization of ordinal definability that arises
from this intuition: namely, definability from ultrafilters on ordinals.
Since it turns out that \textit{every} set is definable from an ultrafilter
on an ordinal,
it is natural in the context of large cardinals to study
the sets definable from increasingly complete such ultrafilters. After all,
the ordinal definable sets are precisely the sets definable from \textit{principal}
ultrafilters on ordinals, or in other words, from ultrafilters that are \(\kappa\)-complete
for all cardinals \(\kappa\). The analysis of this concept leads to a proof
of the uniqueness of elementary embeddings above an extendible cardinal.
\subsection{Completely definable sets}
Suppose \(\kappa\) is an infinite cardinal. 
A set is \textit{\(\kappa\)-completely definable} if it is definable in the structure \((V,\in)\)
from a \(\kappa\)-complete ultrafilter on an ordinal. A set is \textit{completely
definable} if it is \(\delta\)-completely definable for all infinite cardinals \(\delta\).
The class of \(\kappa\)-completely definable sets is denoted by \(\CD(\kappa)\)
and the class of completely definable sets by \(\CD\).

We mentioned that every set is definable from an ultrafilter on an ordinal,
so one might expect that under large cardinal axioms,
for example if \(\kappa\) is strongly compact, then 
every set is definable from a \(\kappa\)-complete
ultrafilter on an ordinal. But in fact, no matter what large
cardinal axioms one assumes, it is consistent that there is
a set that is \textit{not} \(\omega_1\)-completely definable. This is because if \(g\)
is Cohen generic over \(V\), then \(g\) is not \(\omega_1\)-completely definable in \(V[g]\).
Yet all known large cardinal axioms are upwards absolute from \(V\) to \(V[g]\).

For any set \(X\), let \(\UF_\kappa(X)\)
be the set of \(\kappa\)-complete ultrafilters on \(X\). 
Let \(\UF_\kappa(\Ord) = \bigcup_{\delta\in \Ord} \UF_\kappa(\delta)\).
Note that any ordinal can be coded by a principal
ultrafilter on an ordinal and any sequence of ultrafilters on ordinals can be coded
by a single ultrafilter on an ordinal. As an immediate consequence,
we obtain a more familiar characterization of \(\CD(\kappa)\):
\begin{prp}\label{prp:cd_od}
    For any cardinal \(\kappa\), \(\CD(\kappa) = \OD_{\UF_\kappa(\Ord)}\).\qed
\end{prp}

In a somewhat artificial sense, 
complete definability is just a quantifier-flip away from ordinal definability:
\(x\) is ordinal definable if \(x\) is definable from an ultrafilter on an ordinal 
that is \(\kappa\)-complete for all cardinals \(\kappa\);
\(x\) is completely definable if for all cardinal \(\kappa\), \(x\) is definable from 
a \(\kappa\)-complete ultrafilter on an ordinal.

A \(\kappa\)-completely definable set \(x\) is \textit{hereditarily \(\kappa\)-completely definable} 
(resp.\ \textit{hereditarily completely definable}) if
every element of its transitive closure is also \(\kappa\)-completely definable
(resp.\ completely definable).
Thus the class \(\HCD(\kappa)\) of all hereditarily \(\kappa\)-completely definable sets
is the largest transitive subclass of \(\CD(\kappa)\),
and the class \(\HCD\) of all hereditarily completely definable sets is the
largest transitive subclass of \(\CD\).

\begin{prp}\label{prp:hcd_hod}
    For any cardinal \(\kappa\), \(\HCD(\kappa)\) is an inner model of \ZF.
    In fact, \(\HCD(\kappa) = \HOD_{\UF_\kappa(\Ord)}\).
    \begin{proof}
        That \(\HCD(\kappa) = \HOD_{\UF_\kappa(\Ord)}\) is immediate by \cref{prp:cd_od}.
        The structure
        \(\HOD_{\UF_\kappa(\Ord)}\) 
        is a model of \ZF\ since \(\UF_\kappa(\Ord)\) is itself definable from
        an ordinal. 
    \end{proof}
\end{prp}

Let \(\kappa_\alpha\) denote the supremum of the first \(\alpha\) measurable cardinals.
We have a decreasing sequence of inner models:
\[V = \HCD(\omega)\supseteq \HCD(\omega_1) = \HCD(\kappa_1)\supseteq \HCD(\kappa_2)\supseteq\cdots\supseteq
\HCD(\kappa_\alpha)\supseteq\cdots \supseteq\HCD\supseteq \HOD\]

One reason the \(\kappa\)-completely definable sets are interesting is that for certain
large cardinals \(\kappa\), \(\HCD(\kappa)\) is a model of \ZFC. 
\begin{thm}\label{thm:hcd_zfc}
    If \(\kappa\) is a strongly compact cardinal, then \(\HCD(\kappa)\) is a model
    of \ZFC.
\end{thm}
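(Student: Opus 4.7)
By \cref{prp:hcd_hod}, $\HCD(\kappa)=\HOD_{\UF_\kappa(\Ord)}$ already satisfies ZF, so only the Axiom of Choice needs to be verified. The central observation is a reduction: whenever $U$ is a single set, $\HOD_{\{U\}}$ is an inner model of ZFC with a canonical definable wellorder (enumerated by formulas and ordinal parameters with $U$ fixed); and for $U\in\UF_\kappa(\Ord)$ one has $\HOD_{\{U\}}\subseteq\HCD(\kappa)$. Hence it is enough to show that every $x\in\HCD(\kappa)$ belongs to $\HOD_{\{U\}}$ for some $U\in\UF_\kappa(\Ord)$: any wellorder of $x$ coming from $\HOD_{\{U\}}$ then already lives inside $\HCD(\kappa)$.

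Fix $x\in\HCD(\kappa)$. Each element of the transitive closure of $\{x\}$ is ordinal definable from some finite tuple of $\kappa$-complete ultrafilters on ordinals; let $S\subseteq\UF_\kappa(\Ord)$ be the set collecting all ultrafilters that occur in any such tuple. By the strong compactness of $\kappa$, there is a $\kappa$-complete fine ultrafilter $W$ on $\mathcal P_\kappa(S)$, which, after coding its base set as an ordinal, may be taken to lie in $\UF_\kappa(\Ord)$. From $W$ one recovers $S$ canonically as $\bigcup\bigcup W$, and the fineness of $W$ produces, inside the ultrapower $j_W:V\to\Ult(V,W)$, a single set covering $j_W[S]$ of $\Ult(V,W)$-cardinality less than $j_W(\kappa)$. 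The plan is to leverage this cover, together with the fact that elements of $S$ are themselves $\kappa$-complete ultrafilters, to produce an $\OD_{\{W\}}$ wellordering of $S$. Once $S$ is wellordered from $W$, each element of the transitive closure of $\{x\}$ receives a canonical $\OD_{\{W\}}$ definition by taking the least pair of formula and tuple from $S^{<\omega}$ defining it, so $x\in\HOD_{\{W\}}$.

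The main obstacle is constructing the $\OD_{\{W\}}$ wellordering of $S$. The standard wellorder of $\Ult(V,W)$ is built from a global wellorder of $V$, which is not itself ordinal definable, so one cannot directly compare the images $j_W(U)$ for $U\in S$ using it. Overcoming this requires using the strong compactness of $\kappa$ essentially --- for instance by sorting the ultrafilters in $S$ via their Rudin--Keisler projections through $W$, or via the positions of their images inside the fine cover in $\Ult(V,W)$ --- and it is here, rather than in the mere recovery of $S$ from $W$, that strong compactness (as opposed to mere measurability) does its work.
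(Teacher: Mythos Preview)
Your reduction to finding a single $U \in \UF_\kappa(\Ord)$ with $x \in \HOD_{\{U\}}$ is sound, but the execution has two gaps. First, the coding: once you push $W$ forward along a bijection $P_\kappa(S) \to \lambda$ to land in $\UF_\kappa(\Ord)$, you lose the identity $S = \bigcup\bigcup W$, so $S$ is no longer recoverable from the coded ultrafilter. (The fix is \cref{lma:hcd_ultrapower}: anything ordinal definable from the ultrapower map $j_W$ of a $\kappa$-complete ultrafilter is already $\kappa$-completely definable, so you need not code $W$ onto an ordinal at all.) Second, and more seriously, the ``main obstacle'' is never actually overcome: a fine ultrafilter on $P_\kappa(S)$ yields a small cover $\sigma \supseteq j_W[S]$ in $M_W$, but $\sigma$ carries no canonical wellorder, and neither of your two suggestions produces a definable injection from $S$ into the ordinals.

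The paper fills this gap by choosing the fine ultrafilter differently: take $\delta$ large enough that every $U \in S$ is an ultrafilter on an ordinal below $\delta$, and let $\mathcal U$ be a $\kappa$-complete fine ultrafilter on $P_\kappa(P(\delta))$. Now $j_{\mathcal U}[P(\delta)]$---not merely $j_{\mathcal U}[S]$---is covered by a small set, and Kunen's argument (\cref{thm:kunen}) shows that each $\kappa$-complete ultrafilter on $\delta$ is the one derived from $j_{\mathcal U}$ at some $\alpha < j_{\mathcal U}(\delta)$. The map sending $U$ to the least such $\alpha$ is the $\kappa$-completely definable injection you were looking for, and this is precisely \cref{cor:uf_wo}. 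From there the paper wellorders $\HCD(\kappa) \cap V_\alpha$ directly via a pigeonhole argument rather than passing through $\HOD_{\{W\}}$, but the essential missing ingredient in your proof is Kunen's theorem applied to a fine cover of $P(\delta)$ rather than of $S$.
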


For this we will use
the following facts.
\begin{lma}\label{lma:hcd_ultrapower}
    A set \(x\) is \(\kappa\)-completely definable if and only if
    there is an ultrapower embedding \(j : V\to M\) with \(\crit(j) \geq \kappa\)
    such that \(x\) is definable in the structure  \((V,\in,j)\) from ordinal parameters.
    \begin{proof}
        For the forwards direction, note that any \(\kappa\)-complete ultrafilter \(W\)
        on an ordinal is definable in the structure \((V,\in,j_W)\) from the ordinal \(\id_W\);
        hence any set definable in \(V\) from \(W\) is definable from ordinal parameters 
        in the structure \((V,\in,j_W)\).

        For the converse, note that if \(j : V\to M\) is an 
        ultrapower embedding with \(\crit(j) \geq \kappa\),
        then (by the wellordering theorem)
        there is a \(\kappa\)-complete ultrafilter \(W\) on an ordinal such that \(j = j_W\).
    \end{proof}
\end{lma}
\begin{thm}[Kunen]\label{thm:kunen}
    Suppose \(\mathcal U\) is a fine ultrafilter on \(P_\kappa(P(\delta))\)
    and \(W\) is a \(\kappa\)-complete ultrafilter on \(\delta\). Then there is some
    \(\alpha < j_\mathcal U(\delta)\) such that 
    \(W = \{A\subseteq \delta : M_\mathcal U\vDash \alpha\in j_{\mathcal U}(A)\}\).
    \begin{proof}
        Let \(\sigma = \id_{\mathcal U}\). 
        Since \(\mathcal U\) is a fine ultrafilter on \(P_\kappa(P(\delta))\),
        \(j_\mathcal U[P(\delta)]\subseteq \sigma\subseteq j_\mathcal U(P(\delta))\)
        and \(|\sigma|^{M_\mathcal U} < j_\mathcal U(\kappa)\). 
        Let \(B = j_\mathcal U(W)\cap \sigma\),
        so that \(B\in M_\mathcal U\), \(j_\mathcal U[W]\subseteq B \subseteq j_\mathcal U(W)\),
        and \(|B|^{M_\mathcal U} < j_\mathcal U(\kappa)\). 
        Since \(W\) is \(\kappa\)-complete, \(j_\mathcal U(W)\) is \(j_\mathcal U(\kappa)\)-complete,
        and hence \(\bigcap B\in j_\mathcal U(W)\), and in particular, there is some
        \(\alpha\in \bigcap B\). Using that \(j_\mathcal U[W]\subseteq B\), it is easy to see that 
        \(W\subseteq \{A \subseteq \delta : M_\mathcal U\vDash \alpha\in j_\mathcal U(A)\}\),
        and so by the maximality of \(W\), equality holds.
    \end{proof}
\end{thm}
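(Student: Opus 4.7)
The plan is the standard derived-ultrafilter construction: fineness lets us approximate the external set $j_\mathcal U[W]$ by an internal subset of $j_\mathcal U(W)$ of small $M_\mathcal U$-cardinality, and then $j_\mathcal U(\kappa)$-completeness inside $M_\mathcal U$ extracts the desired $\alpha$. Write $j = j_\mathcal U$, $M = M_\mathcal U$, and $\sigma = \id_\mathcal U$. I would begin by unpacking what fineness and the definition of $P_\kappa(P(\delta))$ say about $\sigma$: fineness gives $j[P(\delta)] \subseteq \sigma$, while $\sigma$ representing an element of $P_\kappa(P(\delta))$ forces $\sigma \subseteq j(P(\delta))$ with $|\sigma|^M < j(\kappa)$.

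Next, I would set $B := \sigma \cap j(W)$ and record that $B \in M$, $B \subseteq j(W)$, $|B|^M < j(\kappa)$, and, crucially, $j[W] \subseteq B$ (since each $j(A)$ for $A \in W$ lies in $\sigma$ by fineness and in $j(W)$ by elementarity). The point of this step is that the external, non-internal object $j[W]$ has been replaced by an internal object $B$ of the same flavor, small enough in $M$ to be intersected inside $j(W)$.

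Then I would apply elementarity and completeness. Since $W$ is $\kappa$-complete, $j(W)$ is $j(\kappa)$-complete in $M$; as $B \subseteq j(W)$ and $|B|^M < j(\kappa)$, we obtain $\bigcap B \in j(W)$, and in particular $\bigcap B \neq \emptyset$. Pick any $\alpha \in \bigcap B$. Then for every $A \in W$, $j(A) \in B$, so $\alpha \in j(A)$, giving $W \subseteq U_\alpha$ where $U_\alpha := \{A \subseteq \delta : \alpha \in j(A)\}$. Since $U_\alpha$ is a filter on $\delta$ and $W$ is an ultrafilter, equality holds.

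There is no serious obstacle; the only genuine content is noticing that fineness is exactly the hypothesis needed to swap the non-internal image $j[W]$ for the internal approximant $\sigma \cap j(W)$ while preserving both smallness (relative to $j(\kappa)$) and containment in $j(W)$. The remaining checks — elementarity upgrading $\kappa$-completeness of $W$ to $j(\kappa)$-completeness of $j(W)$, and maximality of $W$ forcing equality with the derived filter — are bookkeeping.
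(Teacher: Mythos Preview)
Your proof is correct and follows essentially the same approach as the paper: set \(\sigma=\id_{\mathcal U}\), use fineness to get \(j[W]\subseteq B:=\sigma\cap j(W)\) with \(|B|^M<j(\kappa)\), apply \(j(\kappa)\)-completeness of \(j(W)\) to find \(\alpha\in\bigcap B\), and conclude by maximality of \(W\). The arguments are identical up to exposition.
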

\begin{cor}\label{cor:uf_wo}
    Suppose \(\kappa\leq \delta\) are cardinals 
    and \(\kappa\) is \(2^\delta\)-strongly compact. Then there is a \(\kappa\)-completely
    definable wellorder of \(\UF_\kappa(\delta)\).
    \begin{proof}
        Since \(\kappa\) is \(2^\delta\)-strongly compact,
        there is a \(\kappa\)-complete fine ultrafilter \(\mathcal U\)
        on \(P_\kappa(P(\delta))\). \cref{thm:kunen} permits us to
        define a function \(g: \UF_\kappa(\delta)\to j_\mathcal U(\delta)\)
        by setting \(g(W)\) equal to the least \(\alpha < j_\mathcal U(\delta)\)
        such that \(W = \{A\subseteq \delta : \alpha \in j_\mathcal U(A)\}\).
        Then \(g\) is an injection and \(g\) is \(\kappa\)-completely definable
        by \cref{lma:hcd_ultrapower}. Set \(W_0\preceq W_1\) if \(g(W_0)\preceq g(W_1)\).
        Then \(\preceq\) is a wellorder of \(\UF_\kappa(\delta)\)
        since it order-embeds into the wellorder \(j_\mathcal U(\delta)\), and \(\preceq\) is
        \(\kappa\)-completely definable since \(\preceq\) is definable from \(g\).
    \end{proof}
\end{cor}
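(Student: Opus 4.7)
The plan is to build the wellorder of \(\UF_\kappa(\delta)\) by injecting it into the ordinals in a \(\kappa\)-completely definable way, invoking Kunen's theorem as the key ingredient. Since \(\kappa\) is \(2^\delta\)-strongly compact and \(|P_\kappa(P(\delta))| \leq 2^\delta\), I first extract a \(\kappa\)-complete fine ultrafilter \(\mathcal U\) on \(P_\kappa(P(\delta))\); then \(j_{\mathcal U}: V\to M_{\mathcal U}\) is an ultrapower embedding with critical point at least \(\kappa\), so by \cref{lma:hcd_ultrapower} every object definable from \(j_{\mathcal U}\) and ordinal parameters will automatically be \(\kappa\)-completely definable.

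Next, I apply \cref{thm:kunen}: for every \(W \in \UF_\kappa(\delta)\), there exists some ordinal \(\alpha < j_{\mathcal U}(\delta)\) with \(W = \{A\subseteq \delta : \alpha \in j_{\mathcal U}(A)\}\). Define
\[
g(W) = \text{the least such } \alpha < j_{\mathcal U}(\delta).
\]
The function \(g : \UF_\kappa(\delta) \to j_{\mathcal U}(\delta)\) is well-defined and injective because the recipe \(\alpha \mapsto \{A\subseteq\delta : \alpha\in j_{\mathcal U}(A)\}\) uniquely recovers \(W\) from \(g(W)\). Now pull back the canonical ordinal wellorder along \(g\): set \(W_0 \preceq W_1\) if and only if \(g(W_0) \leq g(W_1)\). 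This is a wellorder of \(\UF_\kappa(\delta)\) since it is order-isomorphic to a subset of \(j_{\mathcal U}(\delta)\).

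It remains only to check that \(\preceq\) is \(\kappa\)-completely definable. But \(g\) is defined uniformly in the parameters \(\delta\) and \(j_{\mathcal U}\), using only membership and the least-ordinal operator, so \(g\), and therefore \(\preceq\), is definable in the structure \((V,\in,j_{\mathcal U})\) from ordinal parameters; \cref{lma:hcd_ultrapower} then yields that \(\preceq \in \CD(\kappa)\), as required.

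The only subtle point — and the one I expect to be the main obstacle to articulate cleanly — is packaging the appeal to \cref{lma:hcd_ultrapower}: one must keep track of the fact that \(\mathcal U\) is an ultrafilter on \(P_\kappa(P(\delta))\) rather than on an ordinal, but this is dispensed with by the lemma, which translates freely between \(\kappa\)-complete ultrafilters on ordinals and ultrapower embeddings with critical point at least \(\kappa\). Everything else is a direct application of \cref{thm:kunen}.
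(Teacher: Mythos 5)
Your proposal is correct and matches the paper's own proof essentially step for step: the same fine ultrafilter \(\mathcal U\) on \(P_\kappa(P(\delta))\), the same injection \(g\) via the least seed supplied by \cref{thm:kunen}, the same pullback of the ordinal order, and the same appeal to \cref{lma:hcd_ultrapower} (whose converse direction only needs that \(j_{\mathcal U}\) is an ultrapower embedding with critical point at least \(\kappa\), so the fact that \(\mathcal U\) lives on \(P_\kappa(P(\delta))\) rather than an ordinal is indeed harmless). No changes needed.
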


\begin{proof}[Proof of \cref{thm:hcd_zfc}]
        By \cref{cor:uf_wo}, for any ordinal \(\delta\),
        \(\UF_\kappa(\delta)\) admits a \(\kappa\)-completely definable wellorder.
        As a consequence, the class \(\OD_{\UF_\kappa(\delta)}\) admits a \(\kappa\)-completely
        definable wellorder. By \cref{prp:hcd_hod},
        \(\CD(\kappa) = \OD_{\UF_\kappa(\Ord)} = \bigcup_{\delta\in \Ord}\OD_{\UF_\kappa(\delta)}\).

        Now fix an ordinal \(\alpha\), and we will show that there is a
        wellorder of \(\HCD(\kappa)\cap V_\alpha\) in
        \(\HCD(\kappa)\).
        Since \(\CD(\kappa)\) is the increasing union of the classes \(\OD_{\UF_\kappa(\delta)}\), 
        the pigeonhole principle implies that
        for any ordinal \(\alpha\), \(\CD(\kappa)\cap V_\alpha = \OD_{\UF_\kappa(\delta)}\cap V_\alpha\)
        for some cardinal \(\delta\geq \kappa\).
        The restriction of any \(\kappa\)-completely definable wellorder
        of \(\OD_{\UF_\kappa(\delta)}\) to \(\CD(\kappa)\cap V_\alpha\) yields
        a \(\kappa\)-completely definable wellorder of the latter set. Restricting further, 
        \(\HCD(\kappa)\cap V_\alpha\subseteq \CD(\kappa)\cap V_\alpha\) admits a \(\kappa\)-completely
        definable wellorder. This wellorder is trivially hereditarily \(\kappa\)-completely
        definable (its transitive closure is equal to \(\HCD(\kappa)\cap V_\alpha\),
        at least if \(\alpha\) is a limit ordinal), and there is a
        wellorder of \(\HCD(\kappa)\cap V_\alpha\) in
        \(\HCD(\kappa)\), as claimed.

        Since \(\HCD(\kappa)\) satisfies that for any \(\alpha\), \(\HCD(\kappa)\cap V_\alpha\) 
        is wellorderable, \(\HCD(\kappa)\) satisfies the Axiom of Choice.
\end{proof}

Note that while \(\HCD(\kappa)\) is an inner model of ZFC whenever \(\kappa\) is strongly compact,
it is not provable in ZFC that the entire class \(\HCD(\kappa)\) 
can be definably wellordered from any parameter whatsoever. 
(Indeed, by \cref{thm:hcd_ground}, this holds if and only if \(V\) itself can be definably 
wellordered from a parameter.)

We now show that when \(\kappa\) is strongly compact, \(\HCD(\kappa)\) is a very large model.
In fact, \(\HCD(\kappa)\) is a ground of the universe, in the sense of set theoretic geology.
Recall that if \(N\subseteq M\) are models of set theory, \(N\) is said to be a \textit{ground of \(M\)}
if there is a partial order \(\mathbb P\in N\) and an \(N\)-generic filter \(G\subseteq \mathbb P\) in
\(M\) such that \(M = N[G]\).

For any set \(x\), let \(\CD(\kappa)_x\) denote the class of sets that 
are \(\kappa\)-completely definable from \(x\), and let \(\HCD(\kappa)_x\) denote
the class of all sets hereditarily \(\kappa\)-completely definable from \(x\).

\begin{prp}\label{prp:v_hcd_x}
    If \(\kappa\) is strongly compact and \(x\) is
    a set such that \(V_\kappa\subseteq \CD(\kappa)_x\),
    then \(V = \HCD(\kappa)_x\).
    \begin{proof}
        We first claim that for every cardinal \(\lambda\) such that
        \(\lambda^{<\kappa} = \lambda\), there is a \(\kappa\)-independent family of subsets
        of \(\lambda\) of cardinality \(\lambda\) that belongs to \(\HCD(\kappa)_x\).
        To see this, let \(j : V\to M\) be an ultrapower embedding with \(\crit(j) = \kappa\) 
        and \(j(\kappa) > \lambda\). Then \(M\) is closed under \(\kappa\)-sequences.
        We claim that \(V_{j(\kappa)}\cap M\subseteq \HCD(\kappa)_x\). By elementarity, 
        \[V_{j(\kappa)}\cap M \subseteq j(V_{\kappa})\subseteq j(\CD(\kappa)_x)  = \CD^M(j(\kappa))_{j(x)}
        \subseteq \CD^M(\kappa)_{j(x)}\]
        So it suffices to show that \(\CD^M(\kappa)_{j(x)}\subseteq \CD(\kappa)_{x}\).
        
        Nte that \(M\) 
        and \(j(x)\) are definable over the structure \((V,\in,j)\) from \(x\).
        Also every ultrapower embedding \(i : M\to N\) with \(\crit(i)\geq \kappa\)
        is definable over \((V,\in,j,k)\) from ordinal parameters 
        for some ultrapower embedding \(k :V\to N\)
        with \(\crit(k)\geq \kappa\). For this, take \(k = i\circ j\), let
        \(\alpha\in \Ord\) be a seed of \(j\) (so \(M = H^M(j[V]\cup \{\alpha\})\)),
        and let \(\beta = i(\alpha)\).
        Then given \(a\in M\), \(i(a)\) can be computed by
        choosing any \(f\in V\) such that \(a = j(f)(\alpha)\) 
        and noting that 
        \[i(a) = i(j(f)(\alpha)) = k(f)(\beta)\]
        This defines \(i\) in the structure 
        \((V,\in,j,k)\) from the ordinals \(\alpha\) and \(\beta\).

        Since \(M\), \(j(x)\), and every ultrapower embedding of \(M\)
        are each ordinal definable from \(x\) over a structure of the form \((V,\in,j,k)\) 
        where \(j\) and \(k\) are ultrapower embeddings with critical point at least \(\kappa\),
        a slight generalization of \cref{lma:hcd_ultrapower} yields \(\CD^M(\kappa)_{j(x)}\subseteq 
        \CD(\kappa)_{x}\).

        Since \(M\) is a model of \ZFC\ that is
        closed under \(\kappa\)-sequences, there is a \(\kappa\)-independent family
        of subsets of \(\lambda\) in \(M\). Since \(V_{j(\kappa)}\cap M\subseteq \HCD(\kappa)_x\),
        and \(\lambda < j(\kappa)\), this \(\kappa\)-independent family belongs to \(\HCD(\kappa)_x\).
        
        Finally, fix a set of ordinals \(A\). We will prove that
        \(A\in \HCD(\kappa)_x\). Fix \(\lambda\geq \sup A\) such that \(\lambda^{<\kappa} = \lambda\), 
        and let \(\langle S_\alpha : \alpha < \lambda\rangle\in \HCD(\kappa)_x\) be a
        \(\kappa\)-independent family of subsets of \(\lambda\).
        Let \(F\) be the \(\kappa\)-complete filter on \(\lambda\) generated by 
        \[B = \{S_\alpha :\alpha\in A\}\cup \{\lambda\setminus A : \alpha\notin A\}\]
        Since \(\kappa\) is strongly compact,
        there is a \(\kappa\)-complete ultrafilter \(U\) extending \(F\).
        Let \(W = U\cap \HCD(\kappa)_x\).
        Since \(U\) is a \(\kappa\)-complete ultrafilter on an ordinal,
        \(W \in \HCD(\kappa)_x\). Now \(A = \{\alpha : S_\alpha\in W\}\), and so
        \(A\in \HCD(\kappa)_x\).

        Since we have shown that every set of ordinals belongs to \(\HCD(\kappa)_x\),
        the Axiom of Choice yields that \(V = \HCD(\kappa)_x\).
    \end{proof}
\end{prp}

\begin{thm}[Vopenka]\label{thm:vopenka}
    Suppose \(C\) is a class such that for all ordinals \(\alpha\), 
    there is a wellorder of \(C\cap V_\alpha\) in \(\OD_C\). 
    Then for any set of ordinals \(x\), \(\HOD_C\) is a ground of 
    \(\HOD_{C\cup \{x\}}\).
    \begin{proof}[Sketch]
        Suppose \(x\subseteq \beta\).
        By our assumption, there is some ordinal \(\gamma\) 
        such that there is an \(\OD_C\)
        bijection \(f : \gamma\to P^{\OD_C}(P(\beta))\).
        Let \(\mathbb B\) be the Boolean algebra on \(\gamma\) induced by the
        Boolean algebra structure on \(P^{\OD_C}(P(\beta))\). 
        Then \(\mathbb B\) is a complete Boolean algebra
        in \(\HOD_C\). Let \(U\subseteq P^{\OD_C}(P(\beta))\)
        be the principal ultrafilter on \(P(\beta)\)
        concentrated at \(x\); that is, 
        \[U = \{X\in P^{\OD_C}(P(\beta)) : x\in X\}\]
        Let \(G = f^{-1}[U]\). 
        Then \(G\subseteq \mathbb B\) is a \(\HOD_C\)-generic ultrafilter.
        Moreover, one can check that \(\HOD_{C\cup \{x\}} = \HOD_C[G]\).
    \end{proof}
\end{thm}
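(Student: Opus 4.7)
The plan is to run the standard Vop\v{e}nka-style construction, treating \(x\) as a ``generic'' element for the Boolean algebra of \(\OD_C\)-definable subsets of \(P(\beta)\). Fix \(\beta\) with \(x\subseteq \beta\) and let \(\mathcal{A} = P^{\OD_C}(P(\beta))\) denote the class of subsets of \(P(\beta)\) that are ordinal definable from \(C\). Since any \(\OD_C\)-indexed union of \(\OD_C\)-sets is again \(\OD_C\), the class \(\mathcal{A}\) forms a complete Boolean subalgebra of \(P(P(\beta))\).

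First I would wellorder \(\mathcal{A}\) definably. By the hypothesis on \(C\), each \(C\cap V_\alpha\) admits an \(\OD_C\)-wellorder; a pigeonhole argument analogous to the one appearing in the proof of \cref{thm:hcd_zfc} shows that every element of \(\mathcal{A}\) is \(\OD_C\) from parameters inside some fixed \(V_\alpha\), and hence that \(\mathcal{A}\) itself admits an \(\OD_C\)-wellorder. This produces an \(\OD_C\) bijection \(f : \gamma \to \mathcal{A}\) for some ordinal \(\gamma\). Transporting the Boolean operations of \(\mathcal{A}\) through \(f\) yields a complete Boolean algebra \(\mathbb{B}\) on the ordinal \(\gamma\); since the operations are \(\OD_C\) and the universe is an ordinal, \(\mathbb{B}\in \HOD_C\).

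Second, I would introduce the candidate generic:
\[ U = \{ A \in \mathcal{A} : x \in A \}, \qquad G = f^{-1}[U] \subseteq \gamma. \]
Because \(x\) is a single element of \(P(\beta)\) deciding membership in every \(A\in \mathcal{A}\), \(U\) is an ultrafilter on \(\mathcal{A}\), so \(G\) is an ultrafilter on \(\mathbb{B}\); clearly \(G\in \HOD_{C\cup \{x\}}\). To verify genericity over \(\HOD_C\), let \(D\in \HOD_C\) be a maximal antichain of \(\mathbb{B}\). Then \(f[D]\) is an \(\OD_C\)-partition of \(P(\beta)\), so exactly one piece of this partition contains \(x\), and its preimage under \(f\) lies in \(G\cap D\).

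Third, I would identify \(\HOD_C[G] = \HOD_{C\cup \{x\}}\). The inclusion \(\subseteq\) is immediate since \(G,\HOD_C\in \HOD_{C\cup \{x\}}\). For the reverse, \(x\) can be recovered from \(f\in \HOD_C\) and \(G\) by
\[ x = \{ \delta < \beta : f^{-1}(\{ y \subseteq \beta : \delta \in y \}) \in G \}, \]
so \(x\in \HOD_C[G]\), and hence \(\HOD_{C\cup \{x\}}\subseteq \HOD_C[G]\). The step I expect to require the most care is the first one: confirming that the hypothesis really yields an \(\OD_C\)-wellorder of all of \(\mathcal{A}\) (not merely of \(\mathcal{A}\cap V_\alpha\) for each individual \(\alpha\)) and that the resulting complete Boolean algebra \(\mathbb{B}\) lies in \(\HOD_C\) rather than merely in \(\OD_C\), which amounts to chasing the ordinal parameters through the coding supplied by the hypothesis.
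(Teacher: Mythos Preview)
Your proposal is correct and follows essentially the same route as the paper's sketch: both transport the Boolean algebra \(P^{\OD_C}(P(\beta))\) onto an ordinal via an \(\OD_C\) bijection, take the principal ultrafilter at \(x\) as the generic, and then identify \(\HOD_C[G]\) with \(\HOD_{C\cup\{x\}}\). Your write-up is in fact more detailed than the paper's (which simply says ``one can check'' for the last equality); the only step you pass over with ``hence'' is the inclusion \(\HOD_{C\cup\{x\}}\subseteq \HOD_C[G]\), which needs the observation that for any \(\OD_{C\cup\{x\}}\) set of ordinals one can replace \(x\) by a variable ranging over \(P(\beta)\) to obtain an \(\OD_C\) family of elements of \(\mathcal A\)---exactly the trick you already used to recover \(x\) itself.
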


\begin{thm}\label{thm:hcd_ground}
    If \(\kappa\) is strongly compact, then \(\HCD(\kappa)\) is a ground of \(V\).
    \begin{proof}
        The hypotheses of \cref{thm:vopenka} hold
        for \(C = \UF_\kappa(\Ord)\) by \cref{cor:uf_wo}.
        Fix \(x\subseteq \kappa\) such that \(V_\kappa\subseteq L[x]\).
        Then \(\HCD(\kappa)\) is a ground of 
        \(\HCD(\kappa)_x\) by \cref{thm:vopenka}
        and \(V = \HCD(\kappa)_x\) by \cref{prp:v_hcd_x},
        so \(\HCD(\kappa)\) is a ground of \(V\).
    \end{proof}
\end{thm}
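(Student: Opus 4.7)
The plan is to chain together Vop\v{e}nka's theorem (\cref{thm:vopenka}) and \cref{prp:v_hcd_x} via a carefully chosen coding parameter $x \subseteq \kappa$. More precisely, I would set $C = \UF_\kappa(\Ord)$, so that $\HOD_C = \HCD(\kappa)$ by \cref{prp:hcd_hod}, and try to exhibit $V$ as the $\HOD_{C \cup \{x\}}$ extension of $\HOD_C$ by a Vop\v{e}nka forcing. To make this work I need two inputs: that the hypothesis of \cref{thm:vopenka} is met by $C$, and that there exists a choice of $x$ for which $\HOD_{C \cup \{x\}} = V$.

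For the first input, I would verify that $C \cap V_\alpha$ is wellorderable in $\OD_C$ for every $\alpha$. This is essentially done by the pigeonhole argument that already appeared in the proof of \cref{thm:hcd_zfc}: since $\UF_\kappa(\Ord) \cap V_\alpha = \UF_\kappa(\delta)$ for some single cardinal $\delta$, an application of \cref{cor:uf_wo} supplies a $\kappa$-completely definable (hence $\OD_C$) wellorder of it.

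For the second input, I would fix $x \subseteq \kappa$ with $V_\kappa \subseteq L[x]$ (any set coding a wellorder of $V_\kappa$ will do). Since every element of $L[x]$ is ordinal definable from $x$, and ordinal parameters may be regarded as principal $\kappa$-complete ultrafilters, we get $V_\kappa \subseteq \OD_x \subseteq \CD(\kappa)_x$. Now \cref{prp:v_hcd_x} applies and yields $V = \HCD(\kappa)_x = \HOD_{C \cup \{x\}}$.

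Putting the pieces together, \cref{thm:vopenka} (applied with this $C$ and $x$) says that $\HCD(\kappa) = \HOD_C$ is a ground of $\HOD_{C \cup \{x\}}$, which equals $V$ by the previous paragraph. The likeliest friction point is making sure the two ``$\OD_{C}$''-style notions line up cleanly---that is, confirming $\HOD_{\UF_\kappa(\Ord) \cup \{x\}} = \HCD(\kappa)_x$ and that the pigeonhole step truly places the wellorder of $C \cap V_\alpha$ inside $\OD_C$ rather than merely inside some $\OD_{\UF_\kappa(\delta)}$. Both are bookkeeping, so I would expect no genuinely hard step: the theorem is really a two-line corollary of the machinery already set up.
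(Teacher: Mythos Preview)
Your proposal is correct and matches the paper's proof essentially line for line: set \(C = \UF_\kappa(\Ord)\), invoke \cref{cor:uf_wo} to verify the hypothesis of \cref{thm:vopenka}, choose \(x\subseteq \kappa\) with \(V_\kappa\subseteq L[x]\), and combine \cref{thm:vopenka} with \cref{prp:v_hcd_x}. The only minor slip is the claim that \(\UF_\kappa(\Ord)\cap V_\alpha = \UF_\kappa(\delta)\) for a single \(\delta\); it is really a union \(\bigcup_{\gamma<\beta}\UF_\kappa(\gamma)\), but since any finite sequence of ultrafilters on ordinals is coded by one such ultrafilter, \cref{cor:uf_wo} still yields the required \(\OD_C\) wellorder, so the argument goes through.
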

The next proposition follows from the proof of \cref{thm:hcd_ground}.
\begin{prp}\label{prp:vop_bound}
    Suppose \(\kappa\) is strongly compact, let \(\delta = (2^{2^\kappa})^+\).
    Then \(\HCD(\kappa)\) is a ground of \(V\) for a
    forcing in \(\HCD(\kappa)\) of cardinality less than \(\delta\).\qed
\end{prp}

This yields a new consequence of the Ground Axiom:
\begin{thm}[Ground Axiom]\label{thm:ga_hcd}
    Assume there is a proper class of strongly compact cardinals.
    Then \(V = \HCD\).\qed
\end{thm}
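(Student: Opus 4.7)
The plan is to combine \cref{thm:hcd_ground} with the Ground Axiom to show $V = \HCD(\kappa)$ for every strongly compact $\kappa$, and then exploit the monotonicity of the $\HCD(\kappa)$ hierarchy together with the proper class assumption to conclude $V = \HCD$.

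First I would fix an arbitrary strongly compact cardinal $\kappa$. \cref{thm:hcd_ground} tells us directly that $\HCD(\kappa)$ is a ground of $V$: there is some forcing $\mathbb P \in \HCD(\kappa)$ and a $\HCD(\kappa)$-generic filter $G \in V$ with $V = \HCD(\kappa)[G]$. The Ground Axiom asserts that $V$ has no proper ground, i.e.\ that the only such $G$ is the trivial one, so we conclude $V = \HCD(\kappa)$. By hypothesis this holds for a proper class of $\kappa$.

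Next I would use the trivial monotonicity $\HCD(\kappa) \subseteq \HCD(\delta)$ whenever $\delta \leq \kappa$, which holds because any $\kappa$-complete ultrafilter on an ordinal is automatically $\delta$-complete, so $\CD(\kappa) \subseteq \CD(\delta)$, and the implication descends to the hereditary versions. Given any infinite cardinal $\delta$, pick a strongly compact $\kappa \geq \delta$ from the proper class provided by the hypothesis; then $V = \HCD(\kappa) \subseteq \HCD(\delta) \subseteq V$, so $V = \HCD(\delta)$. Taking the intersection over all $\delta$ yields $V = \bigcap_\delta \HCD(\delta) = \HCD$, as a set lies in $\HCD$ exactly when it is hereditarily $\delta$-completely definable for every infinite cardinal $\delta$.

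There is really no substantial obstacle here: the theorem is a formal consequence of \cref{thm:hcd_ground} (which did the real work), the Ground Axiom, and the monotonicity of the $\HCD(\kappa)$ filtration. The only point one needs to be slightly careful about is that the Ground Axiom is being applied to a class-sized collection of potential grounds, one for each strongly compact $\kappa$, but each individual application is to a single ground $\HCD(\kappa)$ given by a set-forcing (indeed, by \cref{prp:vop_bound}, by a forcing of size less than $(2^{2^\kappa})^+$ in $\HCD(\kappa)$), so the Ground Axiom applies in its standard formulation.
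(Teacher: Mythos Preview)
Your proposal is correct and is exactly the argument the paper intends: the theorem is marked with \qed\ precisely because it follows immediately from \cref{thm:hcd_ground} plus the Ground Axiom and the monotonicity of the \(\HCD(\kappa)\) hierarchy, just as you spell out. Your care in verifying \(\HCD = \bigcap_\delta \HCD(\delta)\) is warranted but routine, and your remark about each application of the Ground Axiom being to a single set-forcing ground is a nice touch.
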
 

One can use also use \cref{thm:hcd_ground} to prove that large cardinals 
are downwards absolute to \(\HCD(\kappa)\).
\begin{thm}[\cite{UA}]\label{thm:cov_to_appx}
    Suppose \(\kappa\) is strongly compact and \(M\) is an inner model with the \(\kappa\)-cover property.
    Then \(M\) has the \(\kappa\)-approximation property if and only if every \(\kappa\)-complete
    ultrafilter on an ordinal is amenable to \(M\).\qed
\end{thm}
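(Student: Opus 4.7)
My plan is to split the biconditional into its two implications and handle them separately, expecting the reverse direction to be the hard one.

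For the forward direction, assume $M$ has the $\kappa$-approximation property and let $U$ be a $\kappa$-complete ultrafilter on an ordinal $\delta$. I would show that $U \cap P(\delta)^M$ is $\kappa$-approximated by $M$ and hence lies in $M$. Given any $x \in M$ with $x \subseteq P(\delta)^M$ and $|x|^M < \kappa$, the $\kappa$-completeness of $U$ yields
\[\bigcap(U \cap x) \setminus \bigcup(x \setminus U) \in U,\]
so this set contains some ordinal $\alpha$; then $U \cap x = \{A \in x : \alpha \in A\} \in M$. Amenability to an arbitrary $X \in M$ then follows because every $A \in U \cap X$ already lies in $X \cap P(\delta)^M$, a set in $M$, so $U \cap X = (U \cap P(\delta)^M) \cap X \in M$.

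For the reverse direction, assume every $\kappa$-complete ultrafilter on an ordinal is amenable to $M$, and let $A$ be a $\kappa$-approximated set of ordinals, say $A \subseteq \lambda$ (the general case reduces to this by coding). The plan is to encode $A$ by a $\kappa$-complete ultrafilter that amenability will pull into $M$. Set $\mathcal{P} = P(\lambda)^M$, and for each $X \in [\lambda]^{<\kappa} \cap M$ define
\[Y_X = \{B \in \mathcal{P} : B \cap X = A \cap X\}.\]
Since $A \cap X \in M$ by approximation, each $Y_X \in M$. The family $\{Y_X : X \in [\lambda]^{<\kappa} \cap M\}$ generates a $\kappa$-complete filter $F$ on $\mathcal{P}$: for any $\beta < \kappa$ and any $\langle X_i : i < \beta\rangle$, the $\kappa$-cover property supplies $X^* \in [\lambda]^{<\kappa} \cap M$ containing $\bigcup_i X_i$, and then $Y_{X^*} \subseteq \bigcap_i Y_{X_i}$. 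The filter is proper since $A \cap X^* \in Y_{X^*}$. By the strong compactness of $\kappa$, extend $F$ to a $\kappa$-complete ultrafilter $U$ on $\mathcal{P}$, transported via a bijection in $M$ to an ultrafilter on an ordinal. By amenability, $W := U \cap M \in M$, and then
\[A = \{\alpha < \lambda : \{B \in \mathcal{P} : \alpha \in B\} \in W\}\]
is definable in $M$ from $W$ and $\lambda$, so $A \in M$.

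The main obstacle is the reverse direction: the key insight is that the $M$-definable local data $\langle A \cap X : X \in [\lambda]^{<\kappa} \cap M\rangle$ can be bundled into a single $\kappa$-complete filter whose amenable ultrafilter extension encodes $A$ inside $M$. Once this encoding is set up, strong compactness produces the ultrafilter and the cover property secures $\kappa$-completeness of the generated filter, so that amenability delivers $A$ into $M$.
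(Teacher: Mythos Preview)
The paper does not supply its own proof of this theorem; it is quoted from \cite{UA} and closed with a \qed. Your argument is correct and is essentially the standard one. The forward direction is routine, and in the reverse direction your idea of packaging the local data \(\{A\cap X : X\in M,\ |X|^M<\kappa\}\) into a \(\kappa\)-complete filter on \(P(\lambda)^M\) and then invoking strong compactness is exactly the intended mechanism.

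Two minor points worth tightening. First, when you write \(X\in[\lambda]^{<\kappa}\cap M\), you should mean \(|X|^M<\kappa\); the \(\kappa\)-cover property ensures this coincides with \(|X|^V<\kappa\) for \(X\in M\), so your use of cover to produce \(X^*\) goes through, but it is worth saying explicitly. Second, when you transport \(U\) to an ordinal via a bijection \(f\in M\), spell out that amenability of the transported ultrafilter yields \(U\cap P(\mathcal P)^M\in M\) (pull back along \(f\)), which is what you actually use to recover \(A\).
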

An inner model \(M\) is a \textit{weak extender model for the supercompactness of \(\kappa\)}
if for all \(\lambda \geq \kappa\), there is a normal fine ultrafilter \(\mathcal U\)
on \(P_\kappa(\lambda)\) such that \(P_\kappa(\lambda)\cap M\in \mathcal U\)
and \(\mathcal U\cap M \in M\).
\begin{thm}\label{thm:appxcov}
     If \(\kappa\) is strongly compact, 
     then \(\HCD(\kappa)\) has the \(\kappa\)-approximation and cover properties.
     Moreover, if \(\kappa\) is supercompact, then \(\HCD(\kappa)\)
     is a weak extender model for the supercompactness of \(\kappa\).
     \begin{proof}
        It suffices to show the \(\kappa\)-cover property
        by \cref{thm:cov_to_appx}.
        We proceed by showing that for all cardinals \(\lambda \geq \kappa\), 
        there is a \(\kappa\)-complete
        fine ultrafilter on \(P_\kappa(\lambda)\) concentrating on \(\HCD(\kappa)\).
        It follows that \(\HCD(\kappa)\) has the cover property 
        for subsets of \(\lambda\):
        if \(\sigma\in P_\kappa(\lambda)\),
        then since \(\mathcal U\) is fine, 
        \(\{\tau\in P_\kappa(\lambda) : \sigma\subseteq \tau\}\in \mathcal U\)
        and hence intersects the \(\mathcal U\)-large set
        \(P_\kappa(\lambda)\cap \HCD(\kappa)\); in other words,
        for some \(\tau\in \HCD(\kappa)\), \(\sigma\subseteq \tau\).

        It suffices to find such an ultrafilter for all 
        regular \(\lambda\) large enough that \(\HCD(\lambda)\) is
        stationary correct at \(\lambda\). For such a \(\lambda\),
        there is a stationary partition
        \(\langle S_\alpha : \alpha < \lambda\rangle\) of 
        \(S^\lambda_\omega\) that belongs to \(\HCD(\kappa)\).
        Let \(j : V\to M\) be an elementary embedding
        such that \(\crit(j) = \kappa\) and 
        \(\cf^M(\sup j[\lambda]) < j(\kappa)\). 
        Let 
        \(\langle T_\alpha : \alpha < \lambda\rangle =j(\langle S_\alpha : \alpha < \lambda\rangle)\),
        and in \(M\), let
        \(\sigma\) be the set of \(\alpha < \lambda\)
        such that \(T_\alpha\) reflects to \(\sup j[\lambda]\).

        Then \(j[\lambda]\subseteq \sigma\)
        since \(j[S_\alpha]\subseteq j(S_\alpha)\)
        and \(j[S_\alpha]\) is truly stationary in \(\sup j[\lambda]\).
        Moreover, \(|\sigma|^M < j(\kappa)\).
        To see this, fix a closed cofinal set \(C\subseteq \sup j[\lambda]\) 
        of ordertype less than \(j(\kappa)\). 
        Define \(f : \sigma\to C\) by \(f(\alpha) = \min C\cap S_\alpha\).
        Then \(f\) is injective since \(\langle S_\alpha : \alpha < \lambda\rangle\)
        is a partition. Hence \(|\sigma|^M \leq |C|^M < j(\kappa)\).

        Let \(\mathcal U\) be the ultrafilter
        on \(P_\kappa(\lambda)\) derived from \(j\) using \(\sigma\).
        Then since \(j[\lambda]\subseteq \sigma\), 
        \(\mathcal U\) is a \(\kappa\)-complete fine ultrafilter.
        Since \(\sigma\in j(\HCD(\kappa))\), 
        \(P_\kappa(\lambda)\cap \HCD(\kappa) \in\mathcal U\).

        If \(\kappa\) is supercompact, we could have assumed 
        \(j[\lambda]\in M\), in which case, one can prove
        \(\sigma = j[\lambda]\). Then \(\mathcal U\) 
        is a normal fine ultrafilter on \(P_\kappa(\lambda)\).
        Note that \(\mathcal U\) is definable
        from \(j\), so \(\mathcal U\in \CD(\kappa)\)
        and hence \(\mathcal U\cap \HCD(\kappa)\) 
        belongs to \(\HCD(\kappa)\).
        This suffices to conclude that \(\HCD(\kappa)\) is a weak
        extender model for the supercompactness of \(\kappa\).
     \end{proof}
\end{thm}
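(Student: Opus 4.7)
The plan is to reduce the first clause to the cover property via \cref{thm:cov_to_appx}, since every \(\kappa\)-complete ultrafilter \(U\) on an ordinal is trivially amenable to \(\HCD(\kappa)\): the set \(U\cap \HCD(\kappa)\) is \(\kappa\)-completely definable from \(U\), and its transitive closure consists of hereditarily \(\kappa\)-completely definable sets, so it lies in \(\HCD(\kappa)\). To prove the \(\kappa\)-cover property, I would exhibit, for each sufficiently large cardinal \(\lambda \geq \kappa\), a \(\kappa\)-complete fine ultrafilter on \(P_\kappa(\lambda)\) concentrating on \(P_\kappa(\lambda)\cap \HCD(\kappa)\); fineness then places every \(\sigma\in P_\kappa(\lambda)\) inside some \(\tau\in \HCD(\kappa)\), which is exactly cover.

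To construct such an ultrafilter I would run a Solovay-style stationary partition argument. From strong compactness, fix an embedding \(j:V\to M\) with \(\crit(j)=\kappa\) and \(\cf^M(\sup j[\lambda]) < j(\kappa)\). Restricting to regular \(\lambda\) large enough that \(\HCD(\kappa)\) is stationary correct at \(\lambda\), fix inside \(\HCD(\kappa)\) a partition \(\vec S = \langle S_\alpha : \alpha < \lambda\rangle\) of \(S^\lambda_\omega\) into stationary sets, and let \(\sigma\in M\) be the set of \(\alpha < \lambda\) for which \(j(\vec S)_\alpha\) reflects to \(\sup j[\lambda]\). The usual Solovay analysis gives \(j[\lambda]\subseteq \sigma\) (each \(j[S_\alpha]\) is truly stationary in \(\sup j[\lambda]\)), and one bounds \(|\sigma|^M < j(\kappa)\) by injecting \(\sigma\) into a short closed cofinal \(C\subseteq \sup j[\lambda]\) via \(\alpha\mapsto \min(C\cap j(\vec S)_\alpha)\), using that the \(S_\alpha\) are pairwise disjoint. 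The ultrafilter on \(P_\kappa(\lambda)\) derived from \(j\) using \(\sigma\) is then \(\kappa\)-complete (from \(|\sigma|^M < j(\kappa)\)), fine (from \(j[\lambda]\subseteq \sigma\)), and concentrates on \(\HCD(\kappa)\) because \(\sigma\in j(\HCD(\kappa))\).

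For the supercompactness clause, I would strengthen \(j\) to a \(\lambda\)-supercompactness embedding with \(j[\lambda]\in M\); then the Solovay analysis pins \(\sigma\) down to exactly \(j[\lambda]\), so the derived ultrafilter \(\mathcal U\) on \(P_\kappa(\lambda)\) is normal as well as fine. The remaining requirement for \(\HCD(\kappa)\) to be a weak extender model is \(\mathcal U \cap \HCD(\kappa)\in \HCD(\kappa)\); but \(\mathcal U\) is definable from \(j\), hence \(\mathcal U\in \CD(\kappa)\), and therefore its intersection with the transitive class \(\HCD(\kappa)\) is itself hereditarily \(\kappa\)-completely definable.

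The main obstacle I anticipate is arranging stationary correctness of \(\HCD(\kappa)\) at the relevant \(\lambda\), since that is what justifies placing the partition \(\vec S\) inside \(\HCD(\kappa)\) and preserves stationarity of \(j[S_\alpha]\) in \(\sup j[\lambda]\). One expects this for a tail of regular \(\lambda\) from general properties of \(\HCD(\kappa)\) (in particular, from cover at smaller cardinals and the existence of many \(\kappa\)-complete ultrafilters available in \(\HCD(\kappa)\)), so while there is a mild bootstrapping to untangle, it is not a serious obstruction.
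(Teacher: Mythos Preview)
Your proposal is correct and follows essentially the same route as the paper's proof: reduce approximation to cover via \cref{thm:cov_to_appx}, then run Solovay's stationary-partition argument with a partition chosen in \(\HCD(\kappa)\) to produce a fine \(\kappa\)-complete ultrafilter concentrating on \(\HCD(\kappa)\), and in the supercompact case identify \(\sigma\) with \(j[\lambda]\) to obtain normality. Your explicit verification that \(\kappa\)-complete ultrafilters are amenable to \(\HCD(\kappa)\) is a small improvement over the paper, which leaves that step implicit; one tiny quibble is that \(\kappa\)-completeness of the derived ultrafilter comes from \(\crit(j)=\kappa\), while the bound \(|\sigma|^M<j(\kappa)\) is what places \(\sigma\) in \(j(P_\kappa(\lambda))\).
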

This has a number of surprising corollaries. 
For example,
if \(E\) is an \(\HCD(\kappa)\)-extender with
\(\crit(j_E) \geq \kappa\) and 
\(j_E(A)\cap [\len(E)]^{< \omega} \in \HCD(\kappa)\)
for all \(A\subseteq [\len(E)]^{<\omega}\),
then by a theorem of Woodin, \(E\)
actually belongs to \(\HCD(\kappa)\), despite the fact that
\(\HCD(\kappa)\) is defined in terms of ultrafilters and not 
extenders. Is there a direct proof of this fact?

We turn now to the structure of \(\HCD\) itself under large cardinal assumptions.
The proof is based on the proof of Usuba's theorem \cite{Usuba},
although the result does not literally follow from his theorem.
\begin{thm}\label{thm:hcd_extendible}
    Suppose \(\kappa\) is an extendible cardinal. Then 
    \(\HCD(\kappa) = \HCD\).
    \begin{proof}
        Suppose \(A\) is a \(\kappa\)-completely
        definable set of ordinals. We will show
        that for any regular cardinal \(\delta \geq \kappa\), \(A\) 
        is \(\delta\)-completely definable.
        For this, fix \(\lambda > \delta\)
        such that \(x\in (\HCD(\kappa))^{V_\lambda}\)
        and let \(j : V_{\lambda+1}\to V_{j(\lambda) + 1}\)
        be an elementary embedding with \(\crit(j) = \kappa\) and
        \(j(\kappa) > \lambda\). Note that \((\HCD({j(\kappa))})^{V_{j(\lambda)}}
        \subseteq \HCD(\delta)\). But \(j(A)\)
        and \(j\restriction \delta\) belong to \((\HCD({j(\kappa)}))^{V_{j(\lambda)}}\),
        the latter by the stationary splitting argument from \cref{thm:appxcov}.
        It follows that \(A = j^{-1}[j(A)]\in \HCD(\delta)\).
    \end{proof}
\end{thm}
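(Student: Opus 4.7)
The plan is to prove $\HCD(\kappa) \subseteq \HCD(\delta)$ for every regular cardinal $\delta \geq \kappa$; together with the trivial reverse inclusion $\HCD \subseteq \HCD(\kappa)$ and the identity $\HCD = \bigcap_\delta \HCD(\delta)$, this yields $\HCD(\kappa) = \HCD$. Since each $\HCD(\delta)$ is a transitive inner model of ZF, it suffices to treat a single set of ordinals $A \in \HCD(\kappa)$, and after a harmless reindexing I may assume $A \subseteq \delta$.

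First I would pick $\lambda > \delta$ large enough that $A$ is witnessed to be $\kappa$-completely definable already inside $V_\lambda$, so $A \in (\HCD(\kappa))^{V_\lambda}$. Extendibility of $\kappa$ then furnishes an elementary embedding $j : V_{\lambda+1} \to V_{j(\lambda)+1}$ with $\crit(j) = \kappa$ and $j(\kappa) > \lambda$. The key bookkeeping observation is that every $j(\kappa)$-complete ultrafilter on an ordinal is in particular $\delta$-complete (since $\delta < j(\kappa)$), whence
\[(\HCD(j(\kappa)))^{V_{j(\lambda)}} \;\subseteq\; \HCD(\delta).\]
It therefore suffices to deposit $A$ inside $(\HCD(j(\kappa)))^{V_{j(\lambda)}}$.

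The device I would use is the identity $A = (j\restriction \delta)^{-1}[j(A)]$. By elementarity, $j(A)$ is definable in $V_{j(\lambda)}$ from $j(U)$, a $j(\kappa)$-complete ultrafilter on an ordinal, and so $j(A) \in (\HCD(j(\kappa)))^{V_{j(\lambda)}}$. The main obstacle is placing $j\restriction \delta$ in the same inner model. For this I would lift the stationary-splitting argument from the proof of \cref{thm:appxcov}: since $j(\kappa)$ is strongly compact in $V_{j(\lambda)}$ by elementarity, $(\HCD(j(\kappa)))^{V_{j(\lambda)}}$ is a model of ZFC and, as in that proof, is stationary correct at $\sup j[\delta^+]$. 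Fix within it a partition $\vec S = \langle S_\alpha : \alpha < \delta\rangle$ of $S^{\delta^+}_\omega$ into stationary sets, and apply Solovay's lemma to identify $j[\delta^+]$ with the set of $\alpha < j(\delta^+)$ for which $j(\vec S)_\alpha \cap \sup j[\delta^+]$ is stationary. This description is internal to $(\HCD(j(\kappa)))^{V_{j(\lambda)}}$, so $j[\delta^+]$, and hence its order-isomorphism $j\restriction \delta$ with $\delta$, belongs to that model. With both $j(A)$ and $j\restriction \delta$ in place, $A = (j\restriction \delta)^{-1}[j(A)] \in (\HCD(j(\kappa)))^{V_{j(\lambda)}} \subseteq \HCD(\delta)$, as required.
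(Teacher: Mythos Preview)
Your argument follows the paper's exactly: reflect via extendibility, observe \((\HCD(j(\kappa)))^{V_{j(\lambda)}}\subseteq \HCD(\delta)\), and recover \(A\) from \(j(A)\) together with \(j\restriction\delta\), capturing the latter by the Solovay stationary-splitting trick cited from \cref{thm:appxcov}. One small slip in your expansion of that step: you place the partition \(\vec S\) in \((\HCD(j(\kappa)))^{V_{j(\lambda)}}\), but the description of \(j[\delta^+]\) is in terms of \(j(\vec S)\), and membership of \(\vec S\) in the target model does not by itself put \(j(\vec S)\) there; instead choose \(\vec S\in(\HCD(\kappa))^{V_\lambda}\), so that elementarity of \(j\) yields \(j(\vec S)\in(\HCD(j(\kappa)))^{V_{j(\lambda)}}\), after which your definability argument goes through (no stationary correctness of the inner model is needed, since the set of reflecting indices is computed in \(V_{j(\lambda)}\) and is definable there from \(j(\vec S)\) and an ordinal).
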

\subsection{Embeddings above an extendible cardinal}
We will need the following consequence of 
Kunen's commuting ultrapowers lemma:
\begin{thm}[Kunen]\label{thm:kunen_commute}
    Suppose \(j : V\to M\) is the ultrapower embedding 
    associated to an extender in \(V_\delta\) and \(i : V\to N\) is 
    an ultrapower embedding with \(\crit(i)\geq \delta\). Then
    \(j(i) = i\restriction M\).\qed
\end{thm}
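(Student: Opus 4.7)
The plan is to reduce the statement to Kunen's commuting ultrapowers lemma by a seed-by-seed comparison. Write $j = j_E$ for the extender $E \in V_\delta$, so $M = \Ult(V,E)$, and express $i$ as $j_W$ for a $V$-ultrafilter $W$ on an ordinal at least $\delta$; under this presentation, $j(i)$ denotes the ultrapower embedding $j^M_{j(W)} \colon M \to \Ult^M(M,j(W))$, and the claim ``$j(i) = i \restriction M$'' packages the identifications $\Ult^M(M,j(W)) = \Ult(N,E) = i(M)$ together with the agreement of the two maps on $M$.

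First I would collect the fixed-point facts. Since $E \in V_\delta$ and $\crit(i) \geq \delta$, $i$ fixes $E$ and every finite seed of $E$ pointwise, so $i(E) = E$ and $i(M) = \Ult(N,E)$. Applying $j$ to the $V$-statement ``$\crit(i) \geq \delta$'' yields $\crit(j(i)) \geq j(\delta) \geq \delta$ inside $M$, so $j(i)$ also fixes every seed of $E$.

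Next I would represent an arbitrary $a \in M$ in the form $a = j(f)(s)$ with $f \in V$ of domain in $V_\delta$ and $s$ a seed of $E$, and compute both maps on $a$. Elementarity of $i$, together with $i(s) = s$, gives $(i\restriction M)(a) = i(j(f))(s)$. Elementarity of $j$ applied to the statement that $i$ commutes with function application, together with $j(i)(s) = s$, gives $j(i)(a) = j(i)(j(f))(s)$. So the proof reduces to the identity $j(i)(j(f)) = i(j(f))$ for every such $f$, together with the target-model identification $j(N) = i(M)$.

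This last reduction is the substantive step. To handle it, I would unfold $j(f)$ as the $E$-equivalence class of the constant function with value $f$, observe that $i$ preserves this representation because it fixes $E$ and all the relevant structure below $\delta$, and thereby compute $i(j(f)) = j^N_E(i(f))$. On the other side, $j$-elementarity rewrites $j(i)(j(f))$ via the $j(W)$-presentation of $j(i)$ as $j(j_W(f)) = j(i(f))$. The equality of these two expressions is exactly Kunen's commuting ultrapowers lemma, in the favorable configuration that $E$ is concentrated below $\delta$ while $W$ lives on ordinals at least $\delta$; invoking it completes the proof. The main obstacle is thus not the bookkeeping above but the genuine ultrapower identification at the end, which is precisely why we need the hypothesis that $E$ lies entirely below the critical point of $i$.
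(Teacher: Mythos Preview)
Your bookkeeping is fine, but the proof is circular. After steps 1--4 you have correctly reduced the claim \(j(i) = i\restriction M\) to the single equality \(j_E^N(i(f)) = j_E(i(f))\) for all \(f\in V\), i.e., to \(i\circ j = j\circ i\). You then say this ``is exactly Kunen's commuting ultrapowers lemma'' and invoke it. But \(i\circ j = j\circ i\) (for \(j = j_E\) with \(E\in V_\delta\) and \(\crit(i)\geq\delta\)) is precisely the theorem you are trying to prove: once you know \(j(i)\) and \(i\) agree on the seeds of \(E\)---which you have already shown---the statements \(j(i) = i\restriction M\) and \(i\circ j = j\circ i\) are equivalent. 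So your final step assumes the conclusion.

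Compare with the paper's route via \cref{lma:commuting_ultrapowers}. Matching letters (the lemma's \(M,N\) are the theorem's \(N,M\)), the lemma takes as \emph{hypothesis} exactly the statement you black-box, namely \(i(j) = j\restriction N\), and from it (plus the easy generator condition) derives \(j(i) = i\restriction M\) by the same seed-and-image decomposition you carry out. The substantive content, left implicit by the paper but not supplied by your argument either, is the verification of that hypothesis: since \(\crit(i)\geq\delta\), the ultrapower \(N\) is closed under \({<}\delta\)-sequences; since \(E\in V_\delta\), every function \(h\colon [\lambda]^{<\omega}\to \text{tc}(\{a\})\) used to compute \(j_E(a)\) for \(a\in N\) has domain of size \({<}\delta\) and range contained in \(N\), hence lies in \(N\). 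Therefore \(j_E^N(a) = j_E(a)\) for all \(a\in N\), which is the missing step. Inserting this closure argument in place of your appeal to ``Kunen's lemma'' would complete your proof along essentially the same lines as the paper's.
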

For a proof, see \cref{lma:commuting_ultrapowers} below.
\begin{lma}\label{lma:cd_agree}
    If \(j_0,j_1:V\to M\) are elementary embeddings 
    associated to extenders in \(V_\delta\), then 
    \(j_0\restriction \CD(\delta) = j_1\restriction \CD(\delta)\).
    \begin{proof}
        Suppose \(x\in \CD(\delta)\).
        By \cref{lma:hcd_ultrapower},
        \(x\) is definable from finitely many ordinal parameters \(\vec \alpha\) 
        in the structure \((V,\in,i)\) for some
        ultrapower embedding \(i : V\to N\). 
        By \cref{thm:Woodin}, \(j_0(\vec \alpha) = j_1(\vec \alpha)\),
        and by \cref{thm:kunen_commute},
        \(j_0(i) = i\restriction M = j_1(i)\). 
        Hence \(j_0(x) = j_1(x)\).
    \end{proof}
\end{lma}

\begin{thm}\label{thm:unique_ground}
    Suppose there is a proper class of strongly compact cardinals.
    Then any two elementary embeddings from the universe into the same
    inner model agree on a ground.
    \begin{proof}
        Fix elementary embeddings \(j_0,j_1 : V\to M\). By \cref{thm:ultrapowers}
        and \cref{cor:almost_ultra},
        there are ultrapower embeddings \(i_0,i_1 : V\to N\)
        and an elementary embedding \(k : N\to M\)
        such that \(j_0 = k\circ i_0\) and \(j_1 = k\circ i_1\).
        Let \(\delta\) be a strongly compact cardinal such that
        \(i_0\) and \(i_1\) are the embeddings associated to
        ultrafilters in \(V_\delta\). By \cref{lma:cd_agree}, 
        \(i_0\) and \(i_1\) agree on \(\CD(\delta)\),
        and hence \(j_0\) and \(j_1\) agree on \(\CD(\delta)\).
        Hence \(j_0\) and \(j_1\) agree on \(\HCD(\delta)\), which is 
        a ground by \cref{thm:hcd_ground}.
    \end{proof}
\end{thm}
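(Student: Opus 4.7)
The plan is to reduce to the ultrapower case via Theorem~\ref{thm:ultrapowers}, then invoke Lemma~\ref{lma:cd_agree} and Theorem~\ref{thm:hcd_ground} to identify the ground.

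First I would apply Theorem~\ref{thm:ultrapowers} to the pair $j_0, j_1 : V \to M$ to produce almost ultrapower embeddings $i_0, i_1 : V \to N$ and an elementary embedding $k : N \to M$ with $j_\ell = k \circ i_\ell$ for $\ell = 0, 1$. Since a proper class of strongly compact cardinals gives eventual SCH (above the first strongly compact, by Solovay's classical theorem on singular cardinals), Corollary~\ref{cor:almost_ultra} upgrades each $i_\ell$ to a genuine ultrapower embedding, which by the wellordering theorem corresponds to some $V$-ultrafilter on an ordinal.

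Next I would fix a strongly compact cardinal $\delta$ large enough that the ultrafilters inducing $i_0$ and $i_1$ both lie in $V_\delta$; this is available because there are arbitrarily large strongly compact cardinals and each ultrapower embedding comes from a single ultrafilter on an ordinal. By Lemma~\ref{lma:cd_agree}, $i_0$ and $i_1$ agree on $\CD(\delta)$, and in particular on the inner model $\HCD(\delta) \subseteq \CD(\delta)$. Post-composition with $k$ preserves agreement, so $j_0$ and $j_1$ also agree on $\HCD(\delta)$. Finally, by Theorem~\ref{thm:hcd_ground}, $\HCD(\delta)$ is a ground of $V$, which yields the conclusion.

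The only step requiring real care is the passage from almost ultrapower embeddings to honest ultrapower embeddings: one must confirm that a proper class of strongly compact cardinals suffices to activate Corollary~\ref{cor:almost_ultra}, but this reduces to Solovay's SCH theorem. The remainder is a direct assembly of the infrastructure already in hand.
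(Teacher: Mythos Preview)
Your proposal is correct and follows essentially the same route as the paper's proof: reduce to ultrapowers via \cref{thm:ultrapowers} and \cref{cor:almost_ultra}, pick a strongly compact \(\delta\) above the relevant ultrafilters, apply \cref{lma:cd_agree}, and conclude with \cref{thm:hcd_ground}. You even make explicit the appeal to Solovay's SCH theorem needed to invoke \cref{cor:almost_ultra}, which the paper leaves implicit.
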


\begin{thm}\label{thm:ga}
    Assume the Ground Axiom and a proper class of strongly compact cardinals.
    Then the uniqueness of elementary embeddings holds.\qed
\end{thm}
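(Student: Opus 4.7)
The plan is to derive \cref{thm:ga} immediately as a corollary of \cref{thm:unique_ground}. Given any two elementary embeddings \(j_0, j_1 : V \to M\) into the same inner model, \cref{thm:unique_ground} (which uses the proper class of strongly compact cardinals) produces a ground \(W\) of \(V\) on which \(j_0\) and \(j_1\) agree---concretely, \(W = \HCD(\delta)\) for any strongly compact \(\delta\) large enough that the two ultrapower embeddings extracted via \cref{thm:ultrapowers} come from ultrafilters in \(V_\delta\). The Ground Axiom asserts precisely that \(V\) has no proper ground, so \(W = V\), and hence \(j_0 = j_1\).

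Thus nothing new needs to be proved: the Ground Axiom plays the purely structural role of collapsing the one remaining degree of freedom in the conclusion of \cref{thm:unique_ground} by forcing the agreement ground to equal \(V\) itself. The real work has already been carried out in the preceding machinery---\cref{thm:ultrapowers} and \cref{cor:almost_ultra} (the latter invoking eventual SCH, which is valid here above the first strongly compact by Solovay's theorem) to reduce to ultrapower embeddings; \cref{lma:cd_agree}, via Kunen's commuting ultrapowers lemma and Woodin's \cref{thm:Woodin}, to force agreement on \(\CD(\delta)\); and \cref{thm:hcd_ground}, via \cref{cor:uf_wo} and Vop\v{e}nka's theorem, to identify \(\HCD(\delta)\) as a ground. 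There is no substantive remaining obstacle; the proof is essentially a single invocation followed by a \qed.
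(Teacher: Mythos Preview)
Your proposal is correct and matches the paper's approach exactly: the theorem is stated with a bare \qedsymbol{} precisely because it is an immediate corollary of \cref{thm:unique_ground} together with the Ground Axiom, and your summary of the ingredients feeding into \cref{thm:unique_ground} is accurate.
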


\begin{thm}\label{thm:extendible_uniqueness}
    The uniqueness of elementary embeddings holds above the least extendible cardinal.
    \begin{proof}
        By \cref{thm:ultrapowers}, \cref{cor:almost_ultra},
        and the fact that the eventual singular cardinals hypothesis holds
        (\cref{lma:solovay_SCH}), it suffices to prove the 
        uniqueness of elementary embeddings for ultrapower embeddings 
        \(j_0,j_1 : V\to M\) whose
        critical points lie above the least extendible cardinal \(\kappa\). 
        By \cref{lma:cd_agree}, 
        \(j_0\restriction \HCD = j_1\restriction \HCD\). By \cref{thm:hcd_extendible},
        \(\HCD(\kappa) = \HCD\). Hence \(j_0\) and \(j_1\) agree on \(\HCD(\kappa)\).
        Let \(i\) be their common restriction to \(\HCD(\kappa)\), and note that
        \(i\) is definable over \(\HCD(\kappa)\) by the downwards
        L\'evy-Solovay theorem. Hence \(i\) extends uniquely to any forcing
        extension of \(\HCD(\kappa)\) by a forcing of size less than \(\crit(i)\).
        But by \cref{prp:vop_bound}, \(V\) is such a forcing extension.
        Thus \(j_0 = j_1\), since each extends \(i\).
    \end{proof}
\end{thm}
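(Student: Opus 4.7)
The plan is to layer the reductions developed in this section: first reduce to ultrapower embeddings, then pin them down on \(\HCD(\kappa)\) using the agreement lemma, and finally lift uniqueness to \(V\) by exploiting the presentation of \(V\) as a small forcing extension of \(\HCD(\kappa)\).

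Concretely, given elementary embeddings \(j_0, j_1 : V \to M\) with critical points above the least extendible cardinal \(\kappa\), I would first apply \cref{thm:ultrapowers} to factor them through almost ultrapower embeddings \(i_0, i_1 : V \to N\) via a common \(k : N \to M\). Since \(\kappa\) is strongly compact, the eventual SCH holds by \cref{lma:solovay_SCH}, so \cref{cor:almost_ultra} promotes each \(i_\ell\) to a genuine ultrapower embedding. This reduces the problem to the case in which \(j_0, j_1\) are themselves ultrapower embeddings, associated to ultrafilters living in some \(V_\delta\) for a regular \(\delta \geq \kappa\). \cref{lma:cd_agree} then yields \(j_0 \restriction \CD(\delta) = j_1 \restriction \CD(\delta)\), and in particular agreement on \(\HCD(\delta)\). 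By \cref{thm:hcd_extendible}, \(\HCD(\delta) = \HCD(\kappa) = \HCD\), so \(j_0\) and \(j_1\) share a common restriction \(i\) to \(\HCD(\kappa)\).

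To conclude \(j_0 = j_1\), I would invoke \cref{thm:hcd_ground} and \cref{prp:vop_bound} to present \(V\) as a forcing extension \(\HCD(\kappa)[G]\) by a set forcing \(\mathbb{P} \in \HCD(\kappa)\) of bounded size. The downwards L\'evy--Solovay theorem then makes \(i\) definable over \(\HCD(\kappa)\), so \(i\) lifts uniquely to any forcing extension by a forcing that is small relative to \(\crit(i)\), and both \(j_0\) and \(j_1\) must coincide with this unique lift of \(i\). The main obstacle is exactly this last step: \cref{prp:vop_bound} only bounds \(|\mathbb{P}|\) below \((2^{2^\kappa})^+\), whereas \(\crit(i)\) is only guaranteed above \(\kappa\), so making the unique-lift argument go through requires leveraging the \(\kappa\)-approximation and cover properties of \(\HCD(\kappa)\) established in \cref{thm:appxcov} to see that the Vop\v{e}nka forcing is genuinely small in the sense needed, rather than relying on a crude cardinality comparison.
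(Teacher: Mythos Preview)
Your outline is essentially the paper's own proof: the same reduction to ultrapower embeddings via \cref{thm:ultrapowers} and \cref{cor:almost_ultra} (using that eventual SCH follows from \cref{lma:solovay_SCH} above a strongly compact), the same use of \cref{lma:cd_agree} together with \cref{thm:hcd_extendible} to obtain agreement on \(\HCD(\kappa)\), and the same appeal to downwards/upwards L\'evy--Solovay over the ground \(\HCD(\kappa)\). The only cosmetic difference is that you route agreement through \(\HCD(\delta)\) for a \(\delta\) containing the relevant ultrafilters before invoking \cref{thm:hcd_extendible}, whereas the paper just observes \(\HCD\subseteq\CD(\delta)\) directly; these come to the same thing.

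You are also right to flag the final step as the delicate one. The paper simply writes ``by \cref{prp:vop_bound}, \(V\) is such a forcing extension,'' without explicitly reconciling the bound \((2^{2^\kappa})^+\) with the requirement that the forcing lie below \(\crit(i)\). So you have located precisely the point where the written argument is not self-evidently complete, and your instinct that the \(\kappa\)-approximation and cover properties from \cref{thm:appxcov} supply the missing leverage is a reasonable one --- though note that approximation-and-cover will not literally shrink the Vop\v{e}nka algebra below \(\kappa\); whatever it buys is more indirect, and spelling that out is where any remaining work lies.
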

It would be interesting to know whether there is a combinatorial proof of
this theorem avoiding the use forcing and ordinal definability.

We say the uniqueness of elementary embeddings \textit{fails cofinally}
if it fails above every cardinal.
\begin{thm}\label{thm:superconsistency}
    It is (relatively) consistent with a proper class of supercompact cardinals
    that the uniqueness of elementary embeddings fails cofinally.
    \begin{proof}[Sketch]
    First class force to make every supercompact cardinal indestructible by
    \(\kappa\)-directed closed forcing. 
    Let \(\mathbb P\) the (class) Easton product of the forcings adding a Cohen subset of every
    inaccessible non Mahlo cardinal \(\kappa\). This preserves the supercompacts
    by standard arguments. Moreover, for each \(\kappa\) of Mitchell order 1,
    one can factor \(\mathbb P\) as \(\mathbb P_{\kappa,\infty}\times \mathbb P_{0,\kappa}\)
    and run a essentially the same argument as \cref{thm:consistency}
    in \(V^{\mathbb P_{\kappa,\infty}}\) 
    to prove the uniqueness of elementary embeddings fails at \(\kappa\)
    in \(V^\mathbb P\).
    \end{proof}
\end{thm}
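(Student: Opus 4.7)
The plan is to globalize \cref{thm:consistency} via a class Easton forcing that preserves every supercompact cardinal while enabling the bit-flipping construction at arbitrarily large critical points. First I would apply an Apter--Hamkins-style class Laver preparation to make every supercompact cardinal indestructible under \(\kappa\)-directed closed forcing. Then I would let \(\mathbb P\) denote the class Easton product \(\prod_{\alpha \in I} \Add(\alpha,1)\), where \(I\) is the class of inaccessible non-Mahlo cardinals. For any supercompact \(\delta\), the factorization \(\mathbb P \cong \mathbb P_{<\delta} \times \mathbb P_{\geq \delta}\) shows that \(\mathbb P\) preserves the supercompactness of \(\delta\): the tail is \(\delta\)-directed closed and so preserves the indestructibility of \(\delta\), while the head is sufficiently small to preserve the residual supercompactness by a standard L\'evy--Solovay-style argument. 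In particular \(V^{\mathbb P}\) still carries a proper class of supercompacts.

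Now fix an arbitrary cardinal \(\lambda\); I will produce distinct elementary embeddings of \(V^{\mathbb P}\) with the same target and critical point above \(\lambda\). Choose a cardinal \(\kappa > \lambda\) of Mitchell order \(1\), which exists cofinally beneath any supercompact. Factor \(\mathbb P \cong \mathbb P_{<\kappa} \times \mathbb P_{\geq \kappa}\), set \(V_{1} = V[G_{\geq \kappa}]\), and carry out the template of \cref{thm:consistency} inside \(V_{1}\). Since \(\mathbb P_{\geq \kappa}\) is \(\kappa^{+}\)-closed, a normal measure \(U\) on \(\kappa\) of Mitchell order \(0\) survives from \(V\) to \(V_{1}\); let \(j : V_{1} \to M\) be its ultrapower. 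In \(M\), \(j(\mathbb P_{<\kappa})\) factors as \(\mathbb P_{<\kappa} \times Q\), where \(Q\) is an Easton product of Cohen forcings at the inaccessible non-Mahlo cardinals of \(M\) in the interval \((\kappa, j(\kappa))\). Closure of \(M\) under \(\kappa\)-sequences in \(V_{1}\) makes \(Q\) \({\leq}\kappa\)-closed, and \(|j(\kappa)|^{V_{1}} = \kappa^{+}\), so in \(V_{1}[G_{<\kappa}]\) one can diagonalize through the \(\kappa^{+}\)-many maximal antichains of \(Q\) lying in \(M[G_{<\kappa}]\) to construct an \(M[G_{<\kappa}]\)-generic filter \(H \subseteq Q\). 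Flipping the bits of each component of \(H\) yields a second \(M[G_{<\kappa}]\)-generic filter \(H^{*}\) with \(M[G_{<\kappa}][H] = M[G_{<\kappa}][H^{*}]\); as in \cref{thm:consistency}, \(j\) then extends to two elementary embeddings \(j_{0}, j_{1} : V_{1}[G_{<\kappa}] \to M[G_{<\kappa}][H]\) satisfying \(j_{0}(G_{<\kappa}) \neq j_{1}(G_{<\kappa})\). Since \(V_{1}[G_{<\kappa}] = V^{\mathbb P}\), this yields two distinct normal measures on \(\kappa\) in \(V^{\mathbb P}\) with a common ultrapower, and hence a failure of uniqueness above \(\lambda\).

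The main obstacle is bookkeeping: every ingredient of the proof of \cref{thm:consistency} --- closure of \(M\) under \(\kappa\)-sequences in \(V_{1}\), the \(\kappa^{+}\)-chain condition and antichain count of \(Q\) from \(M\)'s perspective, existence of a master condition allowing \(j\) to be lifted, and most importantly the nontriviality of \(Q\) --- must be reverified in the presence of the auxiliary forcing \(\mathbb P_{\geq \kappa}\). The Mitchell-order-\(1\) hypothesis on \(\kappa\) enters through the nontriviality of \(Q\): since \(U\) has Mitchell order zero, \(\kappa\) fails to be measurable in \(M\), and by elementarity the interval \((\kappa, j(\kappa))\) from \(M\)'s perspective contains enough inaccessible non-Mahlo cardinals to make \(Q\) a genuine product of Cohen forcings supporting the bit-flipping involution. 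Beyond these technical verifications the argument is essentially that of \cref{thm:consistency} lifted along the factorization \(\mathbb P \cong \mathbb P_{\geq \kappa} \times \mathbb P_{<\kappa}\), with no new combinatorial ideas required.
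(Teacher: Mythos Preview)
Your proposal is correct and follows essentially the same approach as the paper's sketch: a class Laver-style preparation making the supercompacts indestructible, followed by the class Easton product of Cohen forcings at inaccessible non-Mahlo cardinals, with supercompactness preserved via the factorization at each supercompact and uniqueness violated at each measurable \(\kappa\) by factoring as \(\mathbb P_{\geq\kappa}\times\mathbb P_{<\kappa}\) and rerunning the bit-flipping argument of \cref{thm:consistency} over \(V[G_{\geq\kappa}]\). Your write-up simply fleshes out the details the paper leaves implicit; the strategy is identical.
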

Note that a model with a proper class of strongly compact cardinals
in which the uniqueness of elementary embeddings fails cofinally
must have a proper class of grounds by \cref{thm:unique_ground}.

\subsection{Application: the Kunen inconsistency for ultrapowers}
This section concerns the following open question.
\begin{qst}\label{qst:kunen_ultrapowers}
    Suppose \(j : V\to M\) is an elementary embedding 
    such that \(M\) is closed under \(\omega\)-sequences.
    Can there be a nontrivial elementary embedding \(k :M \to M\)?
\end{qst}
Note that the requirement that \(M\) be closed under \(\omega\)-sequences is necessary
since given any elementary embedding \(j\), one can construct an iterated ultrapower
\(j_{0\omega} : V\to M_\omega\) such that \(j\) restricts to an elementary embedding
from \(M_\omega\) to itself.
\begin{defn}
    The Rudin-Keisler order is defined on extenders \(E\) and \(F\) 
    by setting \(E\sRK F\) if there is a nontrivial elementary embedding \(k : M_E\to M_F\)
    such that \(k\circ j_E = j_F\).
\end{defn}
Combinatorially, \(E\sRK F\) if there is a nonidentity function \(g : \len(E)\to \len(F)\)
such that \(E_{a} = F_{g[a]}\).

The Rudin-Keisler order is a preorder, 
and it is well-known that if \(E\) is the extender of an ultrafilter
then \(E\not\sRK E\). In other words, if \(j: V\to M\) is an ultrapower embedding,
then there is no nontrivial elementary embedding
\(k : M\to M\) such that \(k\circ j = j\). In fact, a theorem of Solovay states
that the Rudin-Keisler order is \textit{wellfounded} on countably complete ultrafilters.
We will generalize this to extenders. The issue, however, is that the Rudin-Keisler order
is \textit{not} wellfounded, or even irreflexive, on arbitrary (wellfounded) extenders.
(See the remarks following \cref{qst:kunen_ultrapowers}.)
\begin{defn}
An extender \(E\) is said to be \textit{countably closed}
if its associated ultrapower \(M_E\) is closed under \(\omega\)-sequences.
\end{defn}
\begin{thm}\label{thm:RK_WF}
    The Rudin-Keisler order is wellfounded on countably closed extenders.
\end{thm}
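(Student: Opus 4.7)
My plan is to try to derive a contradiction by extracting a descending sequence of ordinals in the countably closed ultrapower $M_{E_0}$. Suppose toward contradiction there is an infinite strictly descending chain $\cdots \sRK E_2 \sRK E_1 \sRK E_0$ of countably closed extenders, witnessed by nontrivial elementary embeddings $k_n : M_{E_{n+1}} \to M_{E_n}$ with $k_n \circ j_{E_{n+1}} = j_{E_n}$. Write $M_n = M_{E_n}$, set $\mu_n = \crit(k_n)$, and let $\bar k_n = k_0 \circ \cdots \circ k_{n-1} : M_n \to M_0$.

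The first observation: whenever $\mu_{n+1} < \mu_n$, the map $k_n$ fixes $\mu_{n+1}$ (since $\mu_{n+1} < \crit(k_n)$), and then by elementarity of $\bar k_n$, $\bar k_{n+1}(\mu_{n+1}) < \bar k_n(\mu_n)$ as ordinals of $M_0$. Each $\mu_n$ is an ordinal and therefore lies in $M_0$; by countable closure of $M_0$ (from the countable closure of $E_0$), the whole sequence $\langle \mu_n : n < \omega\rangle$ lies in $M_0$. Since no sequence of ordinals has an infinite strictly decreasing subsequence, $\mu_n$ must be eventually weakly nondecreasing --- and this is the situation demanding real work.

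I would handle the eventually-weakly-nondecreasing case by splitting into two sub-cases. If $\mu_n$ is eventually constant at some value $\mu$, the derived ultrafilters
\(W_n := \{ A \subseteq \mu : A \in M_{n+1},\, \mu \in k_n(A) \}\)
are countably complete ultrafilters on $\mu$ in $M_{n+1}$. Translating them into $M_0$ via the $\bar k_{n+1}$'s, and using Kunen's commuting ultrapowers (\cref{thm:kunen_commute}) to realign them onto a common base, one should obtain a strictly descending Rudin-Keisler chain of countably complete ultrafilters inside the ZFC model $M_0$, contradicting Solovay's classical wellfoundedness theorem. If instead $\mu_n$ is strictly increasing, I would form the inverse limit $M_\omega$ of the tower $\cdots \to M_2 \to M_1 \to M_0$ along the $k_n$'s: it is wellfounded (since any descending chain in $M_\omega$ projects via $k_{0\omega}$ to a descending chain in $M_0$) and countably closed (coherent $\omega$-sequences in $M_\omega$ pull back to each $M_n$); by \cref{cor:almost_ultra} together with the eventual SCH (\cref{lma:solovay_SCH}), the diagonal embedding $j_\omega : V \to M_\omega$ is an extender embedding, producing a countably closed extender $F$ with $F \sRK E_n$ for every $n$. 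Combining this with the uniqueness of elementary embeddings on the ordinals (\cref{thm:unique_ords}) via a diagonal argument using the projections $k_{n\omega}$ should supply the final contradiction. Completing these tail sub-cases --- especially aligning derived ultrafilters to a common base in the eventually-constant case --- is the principal obstacle.
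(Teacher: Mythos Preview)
Your proposal has genuine gaps at several points, and the overall strategy diverges from the paper's in a way that does not obviously close.

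\textbf{The case split is invalid.} From ``no sequence of ordinals has an infinite strictly decreasing subsequence'' you conclude that $\langle \mu_n\rangle$ is eventually weakly nondecreasing. This is false: $0,1,0,1,\ldots$ is a counterexample. Your first observation only shows that \emph{consecutive} drops $\mu_{n+1}<\mu_n$ produce drops in the images $\bar k_n(\mu_n)$; it does not force the sequence to stabilize into either of your two sub-cases. Passing to a subsequence does not help, since the critical points of the composed maps become minima over blocks rather than the $\mu_{n_i}$'s themselves.

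\textbf{Both sub-cases are incomplete.} In the eventually-constant case, you never establish that the translated ultrafilters $\bar k_{n+1}(W_n)$ sit on a common base or form a strictly descending Rudin--Keisler chain; the invocation of \cref{thm:kunen_commute} is unjustified, since that lemma concerns ultrapower embeddings of $V$, while the $k_n$'s are merely embeddings between the $M_n$'s. In the increasing case, you appeal to the eventual SCH to apply \cref{cor:almost_ultra}, but \cref{lma:solovay_SCH} does not prove SCH outright---it needs an embedding discontinuous at $\lambda^+$, which you have not produced. Even granting an extender $F\sRK E_n$ for all $n$, that is only a lower bound, not a contradiction; the closing phrase ``should supply the final contradiction'' is not an argument.

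The paper's proof avoids all of this with a single device: the definability preorder $\leq_{j_0}$ on $M_0$, which is wellfounded by \cref{lma:j_wf}. Setting $s_n = \langle \bar k_m(\mu_m) : m\geq n\rangle\in M_0$ (using countable closure), one shows $s_{n+1}<_{j_0} s_n$ directly: $s_{n+1}$ lies in the definably closed hull $\bar k_{n+1}[M_{n+1}]$, whereas $s_n$ does not, since $\bar k_n(\mu_n) \leq_{j_0} s_n$ and $\bar k_n(\mu_n)\notin \bar k_{n+1}[M_{n+1}]$. No case split, no SCH, no ultrafilter bookkeeping.
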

Actually, in the spirit of this paper, we prove a slightly stronger second-order theorem
(\cref{thm:inv_lim}), although \cref{lma:extender} suggests that this extra strength is an illusion.
\begin{defn}
    Suppose \(P\) and \(Q\) are models of \text{ZF}
    and \(j : P \to Q\) is a cofinal elementary embedding.
    For \(a,b\in Q\), set \(a\leq_j b\) if there is a structure
    \(\mathcal M\in P\) such that \(b\in j(\mathcal M)\) and 
    \(a\) is definable in \(j(\mathcal M)\)
    using \(b\) as a parameter. For any \(a\in Q\), let \(\nu_j(a)\) denote 
    the least ordinal \(\nu\) such that \(a \leq_j \nu\).
\end{defn}
By reflection, one can prove the schema: if \(a\) is definable in \(Q\) from \(b\) and parameters
in \(j[P]\), then \(a\leq_j b\). If \(P\) and \(Q\) are models of ZFC, one can prove that \(\nu_j(a)\)
is defined for all \(a\in Q\) using the Wellordering Theorem. Using Los's Theorem, one can show that
for any \(b\in Q\), the set \(X_b = \{a\in Q : a\leq_j b\}\) is an elementary substructure
of \(Q\).

The following remarkable fact about elementary embeddings of transitive models
of ZFC may be due to Solovay. In any case, it is closely related to his proof of the wellfoundedness
of the Rudin-Keisler order on countably complete ultrafilters.
\begin{lma}[Folklore]\label{lma:j_wf}
    Suppose \(P\) and \(Q\) are wellfounded models of \textnormal{ZFC}
    and \(j : P \to Q\) is an elementary embedding.
    Then \(\leq_j\) is wellfounded and has rank bounded by \(\Ord\cap Q\).
    \begin{proof}
        Let \(\nu = \nu_j(a)\). We first show that \(\nu \leq_j a\). 
        Let \(D\) be the \(P\)-ultrafilter derived from \(j\) using \(a\), 
        let \(k : M_D^P\to Q\) be the canonical factor embedding,
        and let \(\bar a = k^{-1}(a)\).
        Let \(\bar \nu = \nu_{j_D}(\bar a)\). Thus \(\bar a \leq_{j_D}\bar \nu\) 
        and moreover \(\bar \nu\leq_{j_D}(\bar a)\)
        since this is true of every \(x\in M_D^P\). 
        It follows that \(k(\bar \nu) \leq_j a\) and \(a \leq_j k(\bar \nu)\).
        The latter fact implies \(k(\bar \nu)\geq \nu\).
        Assume towards a contradiction that \(k(\bar \nu) > \nu\).
        Fix a structure \(\mathcal M\in P\) such that \(\nu \in j(\mathcal M)\) and \(a\) is 
        definable from \(\nu\) in \(j(\mathcal M)\). 
        Then \(Q\) satisfies that there is an ordinal \(\xi < k(\bar \nu)\)
        such that \(a\) is definable from \(\xi\) in \(j(\mathcal M)\).
        By elementarity, 
        \(M_D^{P}\) satisfies that there is an ordinal \(\xi < \bar \nu\)
        such that \(\bar a\) is definable from \(\xi\) in \(j_D(\mathcal M)\).
        This contradicts that \(\bar \nu = \nu_{j_D}(\bar a)\).

        It follows that the function \(\nu_j : Q\to \Ord^Q\) ranks the preorder \(\leq_j\).
        Indeed, suppose \(a <_j b\) (in the sense that \(a\leq_j b\) but \(b\not\leq_j a\)).
        Obviously \(\nu_j(a) \leq \nu_j(b)\) (since \(\leq_j\) is transitive).
        But \(\nu_j(a)\neq \nu_j(b)\) or else \(b \leq_j \nu_j(b) = \nu_j(a) \leq_j a\).
        Thus \(\nu_j(a) < \nu_j(b)\).
    \end{proof}
\end{lma}
\begin{thm}\label{thm:inv_lim}
    Suppose \(\langle j_n : n < \omega\rangle\) is a sequence of elementary 
    embeddings \(j_n : V\to M_n\) where \(M_n\) is closed under \(\omega\)-sequences.
    Suppose \(\langle i_{n,m} : m \leq n < \omega\rangle\) is a sequence
    of elementary embeddings such that
    for all \(\ell \leq m \leq n < \omega\), 
    \begin{align*}
        i_{m,\ell}\circ i_{n,m} &= i_{n,\ell}\\
        i_{n,m}\circ j_n &= j_m
    \end{align*}
    Then for all sufficiently large \(m\leq n < \omega\), \(M_m = M_n\) and \(i_{n,m}\)
    is the identity.
    \begin{proof}
        For each \(n < \omega\), let \(\kappa_n = \crit(i_{n+1,n})\).
        Consider the sequence \(s = \langle i_{n,0}(\kappa_n) :  n < \omega\rangle\).
        For each \(n < \omega\), let \(s_n = s\restriction [n,\omega)\).
        Since \(M_0\) is closed under \(\omega\)-sequences, \(s_n\in M_0\) for all \(n < \omega\).

        Clearly \(s_{n+1}\leq_j s_{n}\). We claim that \(s_n\not \leq_j s_{n+1}\).
        Note that \(s_{n+1}\in i_{n+1,0}[M_{n+1}]\).
        Since \(j_0[V]\subseteq i_{n+1,0}[M_{n+1}]\) and \(i_{n+1,0}[M_{n+1}]\) is definably closed,
        \(i_{n+1,0}[M_{n+1}]\) is downwards closed under \(\leq_j\).
        Now \(i_{n,0}(\kappa_n)\notin i_{n+1,0}[M_{n+1}]\) 
        since 
        \[i_{n,0}^{-1}(i_{n,0}(\kappa_n)) = 
        \kappa_n\notin i_{n+1,n}[M_{n+1}] = i_{n,0}^{-1}[i_{n+1,0}[M_{n+1}]]\]
        But \(\kappa_n = s_n(n) \leq_j s_n\). Hence \(s_n\notin i_{n+1,n}[M_{n+1}]\).
        In particular, \(s_n\not \leq_j s_{n+1}\).

        Thus for all \(n < \omega\), \(s_{n+1} <_{j_0} s_n\), 
        and this contradicts \cref{lma:j_wf}.
    \end{proof}
\end{thm}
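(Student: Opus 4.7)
The plan is to adapt Solovay's proof of the wellfoundedness of the Rudin--Keisler order on countably complete ultrafilters, using the rank function \(\nu_{j_0}\) provided by \cref{lma:j_wf} as the device that converts ``descent in \(\leq_{j_0}\)'' into an impossible descent in the ordinals of \(M_0\). Assume toward a contradiction that the conclusion fails; by passing to an \(\omega\)-subsequence of the \(n\)'s we may assume each connecting embedding \(i_{n+1,n}\) is nontrivial, and set \(\kappa_n = \crit(i_{n+1,n})\) and \(\alpha_n = i_{n,0}(\kappa_n)\in M_0\). For each \(k<\omega\) define the tail sequence \(s_k = \langle \alpha_n : n\geq k\rangle\). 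Since \(M_0\) is closed under \(\omega\)-sequences, each \(s_k\) lies in \(M_0\), so the ranks \(\nu_{j_0}(s_k)\) are defined ordinals in \(M_0\). The strategy is to show that \(s_{k+1} <_{j_0} s_k\) for every \(k\), contradicting the wellfoundedness of \(\leq_{j_0}\).

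The direction \(s_{k+1}\leq_{j_0} s_k\) is immediate, since \(s_{k+1}\) is obtained from \(s_k\) by the parameter-free operation of dropping the first entry. For the strict inequality, the key observation is that \(i_{k+1,0}[M_{k+1}]\) is downwards closed under \(\leq_{j_0}\): by the commutativity hypothesis \(j_0 = i_{k+1,0}\circ j_{k+1}\), for any structure \(\mathcal N\in V\) we have \(j_0(\mathcal N) = i_{k+1,0}(j_{k+1}(\mathcal N))\), so \(j_0(\mathcal N)\) lies in the range of \(i_{k+1,0}\); since that range is an elementary substructure of \(M_0\), anything definable in \(j_0(\mathcal N)\) from a parameter in the range also lies in the range. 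Next observe that \(s_{k+1}\) itself is in \(i_{k+1,0}[M_{k+1}]\): it is the image under \(i_{k+1,0}\) of the sequence \(\langle i_{n,k+1}(\kappa_n):n\geq k+1\rangle\), which lies in \(M_{k+1}\) by the \(\omega\)-closure of \(M_{k+1}\). If \(s_k\leq_{j_0} s_{k+1}\), then \(s_k\in i_{k+1,0}[M_{k+1}]\), and hence so is its first coordinate \(\alpha_k = i_{k,0}(\kappa_k)\). Writing \(\alpha_k = i_{k+1,0}(\beta)\) and using \(i_{k+1,0} = i_{k,0}\circ i_{k+1,k}\) together with the injectivity of \(i_{k,0}\), we conclude \(\kappa_k = i_{k+1,k}(\beta)\), contradicting that \(\kappa_k\) is the critical point of \(i_{k+1,k}\).

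The bookkeeping with sequences, extraction, and image-under-\(i_{k+1,0}\) is routine. The main conceptual obstacle is the downward-closure assertion for \(i_{k+1,0}[M_{k+1}]\) under \(\leq_{j_0}\); this is where all the assumptions of the theorem are used at once, namely the commutativity of the system (to ensure \(j_0(\mathcal N)\) lies in the image), the elementarity of \(i_{k+1,0}\) (to ensure the image is an elementary substructure), and the \(\omega\)-closure of \(M_0\) and \(M_{k+1}\) (to place both \(s_k\) and the preimage of \(s_{k+1}\) inside the relevant models). Once this closure is in hand, Solovay's wellfoundedness lemma turns a single infinite descending chain of tail sequences into an impossible infinite descending chain of ordinals, finishing the proof.
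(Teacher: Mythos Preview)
Your proof is correct and follows essentially the same approach as the paper's: define the tail sequences \(s_k\) of the ordinals \(i_{n,0}(\kappa_n)\), show \(s_{k+1}<_{j_0} s_k\) by using that \(i_{k+1,0}[M_{k+1}]\) is downward closed under \(\leq_{j_0}\) while \(\alpha_k\notin i_{k+1,0}[M_{k+1}]\), and invoke \cref{lma:j_wf}. Your write-up is in fact a bit more careful than the paper's in places: you make the subsequence reduction explicit, you exhibit the preimage \(\langle i_{n,k+1}(\kappa_n):n\geq k+1\rangle\in M_{k+1}\) witnessing \(s_{k+1}\in i_{k+1,0}[M_{k+1}]\), and you spell out via \(i_{k+1,0}=i_{k,0}\circ i_{k+1,k}\) why \(\alpha_k\) cannot lie in that range.
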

This yields \cref{thm:RK_WF}:
\begin{proof}[Proof of \cref{thm:RK_WF}]
    Take ultrapowers and apply \cref{thm:inv_lim}.
\end{proof}
\begin{cor}
    Suppose \(M\) is an inner model closed under \(\omega\)-sequences 
    and \(j : V\to M\) and \(k : M\to M\) are elementary embeddings
    such that \(k\circ j = j\). Then \(k\) is the identity.
    \begin{proof}
        Apply \cref{thm:inv_lim}.
    \end{proof}
\end{cor}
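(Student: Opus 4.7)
The plan is to derive this corollary as a direct application of \cref{thm:inv_lim}, using the single embedding $k$ to generate the required tower. Assume toward a contradiction that $k$ is not the identity. Since $k:M\to M$ is an elementary embedding of a model of ZFC into itself, $k$ then has a critical point $\kappa$, and iterating $k$ strictly increases $\kappa$ (so $k^n(\kappa) < k^{n+1}(\kappa)$ for all $n$); in particular, no positive power of $k$ is the identity.

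Next, I would set $M_n = M$ and $j_n = j$ for every $n<\omega$, and define $i_{n,m} = k^{n-m}:M\to M$ for $m\leq n$, where $k^0$ is the identity. The two algebraic conditions of \cref{thm:inv_lim} are immediate: $i_{m,\ell}\circ i_{n,m} = k^{m-\ell}\circ k^{n-m} = k^{n-\ell} = i_{n,\ell}$, and by induction on $n-m$, using the hypothesis $k\circ j = j$, we get $i_{n,m}\circ j_n = k^{n-m}\circ j = j = j_m$. The closure hypothesis on the $M_n$ is just the assumption that $M$ is closed under $\omega$-sequences.

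Now \cref{thm:inv_lim} produces some $N<\omega$ such that for all $N\leq m\leq n<\omega$, the map $i_{n,m}$ is the identity. Taking $m = N$ and $n = N+1$ gives $k = i_{N+1,N} = \mathrm{id}_M$, contradicting the observation in the first paragraph that no positive power of a nonidentity $k$ can be the identity. Hence $k$ must have been the identity to begin with.

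There is really no substantive obstacle here: the whole point of the setup of \cref{thm:inv_lim} is to axiomatize the data of an inverse system of embeddings sitting over a fixed $j$, and a single self-embedding $k$ of $M$ over $j$ generates such a system trivially by iteration. The content of the result is entirely in \cref{thm:inv_lim} (and ultimately in \cref{lma:j_wf}); the corollary is the one-embedding specialization of that wellfoundedness statement.
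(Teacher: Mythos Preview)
Your proof is correct and is exactly the intended application of \cref{thm:inv_lim}: the paper's proof is literally the single line ``Apply \cref{thm:inv_lim},'' and you have simply spelled out the inverse system $M_n = M$, $j_n = j$, $i_{n,m} = k^{n-m}$ that makes this work. The first paragraph about critical points and powers of $k$ is harmless but unnecessary, since once \cref{thm:inv_lim} gives $i_{N+1,N} = k = \mathrm{id}$ you already have the desired contradiction directly.
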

\begin{thm}
    Suppose \(\delta\) is extendible, \(M\) is an inner model closed under \(\omega\)-sequences, 
    and \(j : V\to M\) is an elementary embedding with critical point above \(\delta\). 
    Then there is no nontrivial elementary embedding from \(M\) to \(M\).
    \begin{proof}
        Suppose \(k : M\to M\) is an elementary embedding. Note that \(k\circ j\) and \(j\)
        agree on the ordinals by \cref{thm:unique_ords}, and therefore \(\crit(k\circ j) > \delta\).
        The theorem now follows from \cref{thm:extendible_uniqueness}.
    \end{proof}
\end{thm}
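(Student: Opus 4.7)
The plan is to suppose $k : M \to M$ is an arbitrary elementary embedding and to deduce that $k$ must be the identity, by playing off the two embeddings $j$ and $k \circ j$ from $V$ into the same inner model $M$ against each other. The extendibility of $\delta$ enters solely as a hypothesis license to invoke the global uniqueness theorem \cref{thm:extendible_uniqueness}, while closure of $M$ under $\omega$-sequences enters solely to invoke the inverse-limit / Rudin--Keisler wellfoundedness machinery (\cref{thm:inv_lim}) packaged in the corollary immediately preceding this theorem.

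First I would note that $k \circ j : V \to M$ is a composition of elementary embeddings and hence elementary, with codomain the same inner model $M$. By \cref{thm:unique_ords}, any two elementary embeddings from $V$ into $M$ agree on the ordinals, so $k(j(\alpha)) = j(\alpha)$ for every ordinal $\alpha$. In particular, $k \circ j$ and $j$ have the same critical point on the ordinals, so $\crit(k \circ j) = \crit(j) > \delta$. Since $\delta$ is itself extendible it is at least the least extendible cardinal, hence \cref{thm:extendible_uniqueness} applies to the pair $j, k \circ j$ and forces $k \circ j = j$.

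Second, the corollary directly preceding this theorem asserts that whenever $M$ is closed under $\omega$-sequences, any elementary embedding $k : M \to M$ with $k \circ j = j$ is the identity. Applying this corollary gives that $k$ is trivial and completes the proof.

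There is essentially no hard step in this deduction once the three structural theorems \cref{thm:unique_ords}, \cref{thm:extendible_uniqueness}, and \cref{thm:inv_lim} are available; the whole argument is the formal observation that uniqueness of embeddings into $M$ upgrades ``$k$ fixes the ordinals in $j[\Ord]$'' (a consequence of ordinal-agreement) into the much stronger ``$k \circ j = j$'' (via extendible uniqueness), at which point the inverse-limit lemma collapses $k$ to the identity. If there is any subtlety worth flagging, it is only the verification that extendibility of $\delta$ suffices rather than ``above the least extendible,'' which is automatic since an extendible cardinal is certainly no smaller than the least extendible cardinal.
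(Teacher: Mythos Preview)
Your proof is correct and follows the same route as the paper's. The only difference is that you make explicit the final step---invoking the preceding corollary (an application of \cref{thm:inv_lim}) to pass from $k\circ j = j$ to $k = \mathrm{id}$---whereas the paper compresses this into the phrase ``the theorem now follows from \cref{thm:extendible_uniqueness}.''
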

\subsection{Weaker hypotheses}
The models \(\HCD(\kappa)\) of the previous section are particularly interesting,
being models of the Axiom of Choice, but in fact certain applications
of these models can be carried out under hypotheses of lower consistency strength
than a strongly compact cardinal.
Here we will show:
\begin{thm}\label{thm:strong}
    Assume there is a proper class of strong cardinals. Then
    any pair of ultrapower embeddings from the universe into an inner model
    agree on a ground of \(V\).
\end{thm}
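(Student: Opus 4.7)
The plan is to follow the template of \cref{thm:unique_ground}, replacing strong compactness with strength. Given ultrapower embeddings $j_0, j_1 \colon V \to M$, each arises from an ultrafilter in $V_\delta$ for some $\delta$. By \cref{lma:cd_agree}, they agree on $\CD(\delta)$, and hence on the transitive class $\HCD(\delta)$. Fixing a strong cardinal $\kappa > \delta$ (using the proper class hypothesis) and noting that $\HCD(\kappa) \subseteq \HCD(\delta)$, it suffices to establish the strong-cardinal analog of \cref{thm:hcd_ground}: that $\HCD(\kappa)$ is itself a ground of $V$.

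This task breaks into two ingredients mirroring the proof of \cref{thm:hcd_ground}. The first is an analog of \cref{cor:uf_wo}: for each $\lambda \geq \kappa$, the set $\UF_\kappa(\lambda)$ admits a $\kappa$-completely definable wellorder. Here one uses a $(\kappa,\lambda^+)$-extender $E$; since $V_{\lambda^+} \subseteq M_E$ we have $\UF_\kappa(\lambda) \subseteq M_E$, and restricting the $M_E$-internal wellorder of $V_{\lambda^+}\cap M_E$ (which is $\kappa$-completely definable from $j_E$ by \cref{lma:hcd_ultrapower}) yields what is needed. With this in hand, \cref{thm:hcd_zfc} and the Vop\v{e}nka-theoretic portion of \cref{thm:hcd_ground} go through verbatim, so $\HCD(\kappa) \vDash \ZFC$ and $\HCD(\kappa)$ is a ground of $\HCD(\kappa)_x$ for every set $x$.

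The second and harder ingredient is the analog of \cref{prp:v_hcd_x}: $V = \HCD(\kappa)_x$ for a suitable $x$ with $V_\kappa \subseteq \HCD(\kappa)_x$. The original argument uses strong compactness to extend the $\kappa$-complete filter coded by $A \subseteq \lambda$ and a $\kappa$-independent family to a $\kappa$-complete ultrafilter, a step that strong cardinals do not automatically provide. My plan is to substitute an extender-based construction: given such $A$ and a $(\kappa,\lambda^+)$-extender $E$ with $V_{\lambda^+}\subseteq M_E$, both $A$ and the $\kappa$-independent family $\langle S_\alpha : \alpha < \lambda\rangle$ lie in $M_E$, so one can derive from $j_E$ a $\kappa$-complete ultrafilter on an ordinal that codes $A$---for example, by pairing $A$ as a seed in $M_E$ with the extender's action on $\langle S_\alpha\rangle$. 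The principal obstacle is arranging this coding so that recovery of $A$ uses only ordinal parameters (without covert reference to $j_E$), uniformly in $A$ and with a single parameter $x$ (such as a wellorder of $V_\kappa$) suffic\-ing for all sets of ordinals. Once $V = \HCD(\kappa)_x$ is in place, \cref{thm:vopenka} yields that $\HCD(\kappa)$ is a ground of $V$, and $j_0, j_1$ agree on this ground.
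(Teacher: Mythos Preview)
Your template is reasonable, but there is a genuine gap in the first ingredient that propagates through the rest. You invoke \cref{lma:hcd_ultrapower} to conclude that the wellorder read off from \(j_E\) is \(\kappa\)-completely definable, but that lemma characterizes \(\CD(\kappa)\) in terms of \emph{ultrapower} embeddings. The embedding \(j_E\) coming from a \((\kappa,\lambda^+)\)-extender is an extender embedding, not an ultrapower embedding, so sets defined from \(j_E\) have no reason to lie in \(\CD(\kappa)\). Thus your wellorder of \(\UF_\kappa(\lambda)\) is not \(\kappa\)-completely definable, and the Vop\v{e}nka-style argument that \(\HCD(\kappa)\vDash\ZFC\) and is a ground does not go through. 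The same issue reappears in your second ingredient: the coding of \(A\) via seeds of \(j_E\) again only yields extender-definable data, not \(\kappa\)-completely definable data, so you are not building \(\HCD(\kappa)_x\) but something larger.

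The paper confronts exactly this mismatch by abandoning \(\HCD(\kappa)\) in favor of a new class \(\HED(\kappa)\), the hereditarily \(\kappa\)-\emph{extender} definable sets, where definability from extender embeddings \(j:V\to M\) with \(\crit(j)\geq\kappa\) and \(M^{<\kappa}\subseteq M\) is built in. Agreement of \(j_0,j_1\) on \(\ED(\lambda)\) then requires a generalization of Kunen's commuting ultrapowers lemma to extender embeddings (\cref{lma:commuting_ultrapowers} and \cref{cor:ed_agree}). More interestingly, the paper does \emph{not} try to show \(\HED(\kappa)\vDash\ZFC\) and does not use Vop\v{e}nka's theorem; instead it proves that \(\HED(\kappa)\) has the \(\theta\)-uniform cover property, invokes Bukovsky's theorem to conclude it is a \(\ZF\)-ground, and then appeals to Usuba's result that every \(\ZF\)-ground contains a \(\ZFC\)-ground. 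This sidesteps precisely the wellordering obstacle you ran into. The analog of \cref{prp:v_hcd_x} also becomes much easier: since \(V_\lambda\subseteq M\) for a strong extender, one immediately gets \(V_\lambda\subseteq\HED^M(j(\kappa))_{j(A)}\subseteq\HED(\kappa)_{j(A)}\), with no need for independent families or the filter-extension step that you correctly flagged as an obstacle.
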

We immediately obtain a proof of the uniqueness of elementary embeddings
from the Ground Axiom under (consistency-wise) weaker hypotheses:
\begin{cor}\label{thm:strong_ga}
    Assume the Ground Axiom and a proper class of strong cardinals.
    Then the uniqueness of ultrapower embeddings holds.\qed
\end{cor}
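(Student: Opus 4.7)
The plan is to derive \cref{thm:strong_ga} as an immediate consequence of \cref{thm:strong}, following exactly the pattern used to deduce \cref{thm:ga} from \cref{thm:unique_ground}. Let \(j_0, j_1 : V \to M\) be two ultrapower embeddings into the same inner model. Under our hypothesis of a proper class of strong cardinals, \cref{thm:strong} supplies a ground \(N\) of \(V\) such that \(j_0 \restriction N = j_1 \restriction N\). The Ground Axiom asserts that \(V\) has no proper grounds, so the only ground of \(V\) is \(V\) itself; hence \(N = V\), and therefore \(j_0 = j_1\).

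There is essentially no obstacle inside the corollary itself: all of the real content is packaged into \cref{thm:strong}, and the Ground Axiom plays the single role of collapsing the ``agreement on a ground'' conclusion of that theorem into outright equality of the embeddings. The only thing worth verifying is that the hypotheses of \cref{thm:strong} are satisfied, which is automatic, since the given \(j_0, j_1\) are by assumption ultrapower embeddings into a common inner model and the proper class of strong cardinals is part of the standing hypothesis. Accordingly, my ``proof'' is nothing more than the two-sentence deduction above, and the substantive difficulty — namely, producing the ground \(N\) from a proper class of strong cardinals, presumably by building an analog of \(\HCD(\kappa)\) out of extender embeddings rather than \(\kappa\)-complete ultrafilters on ordinals — is entirely confined to \cref{thm:strong}.
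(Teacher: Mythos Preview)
Your proof is correct and matches the paper's approach exactly: the corollary is stated with a \qed\ and no further argument, being an immediate consequence of \cref{thm:strong} together with the Ground Axiom, precisely as you describe.
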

By \cref{thm:ultrapowers}, 
the uniqueness of ultrapower embeddings implies
the uniqueness of extender embeddings.
In the context of the eventual SCH, one can improve this to arbitrary elementary embeddings
using \cref{cor:almost_ultra}.

We begin by defining a ZF-ground of \(V\) on which the embeddings agree.
\begin{defn}
    Suppose \(\kappa\) is a cardinal. A set is \textit{\(\kappa\)-extender definable}
    if it is definable over \((V,j)\) for some 
    extender embedding \(j : V\to M\) such that \(\crit(j)\geq \kappa\)
    and \(M^{<\kappa} \subseteq M\). We denote the class of \(\kappa\)-extender definable
    sets by \(\ED(\kappa)\). The class of \textit{hereditarily \(\kappa\)-extender definable sets},
    denoted by \(\HED(\kappa)\), is the largest transitive subclass of \(\ED(\kappa)\).
\end{defn}
Everything we prove about \(\ED(\kappa)\) can also be proven
about the conceivably smaller class of sets definable from short strong extenders,
or in other words definable from an elementary embedding 
\(j : V\to M\) such that \(\crit(j)= \kappa\), \(j(\kappa) > \lambda\)
and \(M^\kappa \subseteq M\), \(V_\lambda\subseteq M\), and \(M = H^M(j[V]\cup V_\lambda)\).
The relationship between the two notions is unclear.

The proof uses a generalization of Kunen's commuting ultrapowers lemma:
\begin{lma}\label{lma:commuting_ultrapowers}
    Suppose \(i : V\to M\) and \(j : V\to N\) are elementary embeddings such that
    \(i(j) = j\restriction M\) and \(i(\nu) = j(i)(\nu)\) 
    for all generators \(\nu\) of \(j\). 
    Then \(j(i) = i\restriction N\).
    \begin{proof}
        In general given elementary embeddings \(i : V\to M\) and \(j : V\to N\),
        one has \(i\circ j = i(j)\circ i\) because \(i(j(x)) = i(j)(i(x))\).
        But note that in our case, 
        \(i\circ j = j(i)\circ j\)
        since \(i\circ j = i(j)\circ j = j\circ i = j(i)\circ j\).
        In particular, \(i(N) = j(i)(N)\) since 
        \(N = j(V)\).
        Therefore \(i,j(i) : N\to i(N)\) are elementary embeddings with the same target model.

        Note that \(i\restriction j[V] = j(i)\restriction j[V]\), since this is just another
        way of saying that \(i\circ j = j(i)\circ j\).
        Our hypothesis states that \(i\restriction G = j(i)\restriction G\)
        where \(G\) is the class of generators of \(j\).
        Now \(N = H^N(j[V]\cup G)\),
        and \(i\) and \(j(i)\) coincide on \(j[V]\cup G\). Hence \(i = j(i)\).
    \end{proof}
\end{lma}
\begin{cor}\label{cor:ed_agree}
    Suppose \(\lambda\) is a cardinal and 
    \(j_0,j_1 : V\to M\) are extender embeddings such that \(M= H^M(j_n[V]\cup V_\alpha)\),
    for some \(\alpha < \lambda\).
    Then \(j_0\) and \(j_1\) agree on \(\ED(\lambda)\).\qed
\end{cor}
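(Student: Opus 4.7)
The plan is to imitate the proof of \cref{lma:cd_agree} nearly verbatim, substituting the general commuting ultrapowers lemma \cref{lma:commuting_ultrapowers} for Kunen's single-extender theorem \cref{thm:kunen_commute}. Fix $x \in \ED(\lambda)$. Unfolding the definition, there is an extender embedding $i : V \to N$ with $\crit(i) \geq \lambda$ and $N^{<\lambda} \subseteq N$, a formula $\phi$, and ordinal parameters $\vec\alpha$ such that $x$ is the unique $y$ satisfying $(V,\in,i) \vDash \phi(y,\vec\alpha)$. By elementarity of $j_n$, the value $j_n(x)$ is the unique $y \in M$ satisfying $(M,\in,j_n(i)) \vDash \phi(y,j_n(\vec\alpha))$ for each $n \in \{0,1\}$. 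To conclude $j_0(x) = j_1(x)$ it therefore suffices to show $j_0(\vec\alpha) = j_1(\vec\alpha)$ and $j_0(i) = j_1(i)$.

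The ordinal agreement is immediate from \cref{thm:unique_ords}. For the agreement on the class function $i$, the strategy is to apply \cref{lma:commuting_ultrapowers} with $j_n$ in the role of ``$j$'' and $i$ in the role of ``$i$'', for each $n$, in order to conclude $j_n(i) = i\restriction M$. Both outputs are then the same restriction of $i$, so $j_0(i) = j_1(i)$ as desired. The hypotheses of the lemma are $i(j_n) = j_n \restriction N$ and $i(\nu) = j_n(i)(\nu)$ for every generator $\nu$ of $j_n$. The assumption $M = H^M(j_n[V] \cup V_\alpha)$ with $\alpha < \lambda$ means that $j_n$ is (isomorphic to) the ultrapower by an extender $E_n$ of rank strictly below $\lambda$ and that the generators of $j_n$ may be taken to be ordinals below some $\beta < \lambda$. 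Since $\crit(i) \geq \lambda$, the embedding $i$ fixes $E_n$ and every ordinal below $\beta$ pointwise, and this easily yields both hypotheses.

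The only thing to check carefully is the rank bookkeeping: that the assumption $\alpha < \lambda$ with $\lambda$ an infinite cardinal really does force the coding extender $E_n$ and every generator of $j_n$ into $V_\lambda$, well below $\crit(i)$. This is routine, and I expect no substantive obstacle---\cref{cor:ed_agree} is essentially just the extender-embedding analogue of \cref{lma:cd_agree}, with \cref{lma:commuting_ultrapowers} playing the role \cref{thm:kunen_commute} played there.
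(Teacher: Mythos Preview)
Your proposal is correct and matches the paper's intended argument: the corollary is marked with \qed\ precisely because it follows from \cref{lma:commuting_ultrapowers} in exactly the way you describe, mirroring the proof of \cref{lma:cd_agree}. One small remark: for the first hypothesis \(i(j_n) = j_n\restriction N\), you need not only that \(i\) fixes \(E_n\) but also that \(N^{<\lambda}\subseteq N\) (so that \(N\) computes the \(E_n\)-ultrapower correctly), which is part of the definition of \(\ED(\lambda)\); and for the ordinal agreement you could equally cite \cref{thm:Woodin} as in \cref{lma:cd_agree}, since extender embeddings are definable.
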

The following theorem, generalizing \cref{thm:hcd_ground}, is the key:
\begin{thm}\label{thm:zf_ground}
    Suppose \(\kappa\) is strong. Then 
    \(\HED(\kappa)\) is a \ZF-ground.
\end{thm}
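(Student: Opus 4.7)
The plan is to adapt the proof of \cref{thm:hcd_ground}, with the strongness of \(\kappa\) replacing strong compactness and extenders replacing \(\kappa\)-complete ultrafilters on ordinals. Let \(C\) denote the class of extenders \(E\) with \(\crit(j_E) \geq \kappa\) and \(M_E^{<\kappa} \subseteq M_E\), so that \(\HOD_C = \HED(\kappa)\); the strategy is to verify the hypothesis of \cref{thm:vopenka} for \(C\) and then produce a set \(x\) with \(V = \HED(\kappa)_x\), at which point the theorem follows by the same application of Vopenka's theorem as in \cref{thm:hcd_ground}.

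For the Vopenka hypothesis, I would prove the analogue of \cref{cor:uf_wo}: for each \(\alpha\) there is a \(\kappa\)-extender definable wellorder of \(C \cap V_\alpha\). Given \(\alpha\), pick \(\lambda > \alpha\) and a \(\lambda\)-strong extender embedding \(j : V \to M\) with \(\crit(j) = \kappa\) and \(j(\kappa) > \lambda\). Since \(V_\lambda \subseteq M\), the canonical \(M\)-wellorder of \(V_\lambda\) is a set in \(M\), and its restriction to \(C \cap V_\alpha\) is definable in \((V, \in, j)\) from the ordinal \(\alpha\), placing this wellorder in \(\ED(\kappa)\). Moreover every \(y \in V_\lambda\) is the \(\xi\)-th element of the \(M\)-wellorder for some ordinal \(\xi\), so \(y \in \ED(\kappa)\); a transfinite induction on rank shows the entire transitive closure of the wellorder lies in \(\ED(\kappa)\), whence the wellorder belongs to \(\HED(\kappa)\).

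For the analogue of \cref{prp:v_hcd_x}, fix \(x \subseteq \kappa\) with \(V_\kappa \subseteq L[x]\) and show \(V = \HED(\kappa)_x\). Given a set of ordinals \(A \subseteq \lambda\), take a \(\lambda\)-strong embedding \(j : V \to M\) with \(\crit(j) = \kappa\) and \(j(\kappa) > \lambda\). Then \(A \in V_\lambda \subseteq V_{j(\kappa)}^M\), and \(j(x)\) induces a definable wellorder of \(V_{j(\kappa)}^M\) in \(M\); thus \(A\) is the \(\xi\)-th element of that wellorder for some ordinal \(\xi < j(\kappa)\). Hence \(A\) is definable in \((V, \in, j)\) from \(x\) and \(\xi\), giving \(A \in \ED(\kappa)_x\). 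By the Axiom of Choice every set belongs to \(\HED(\kappa)_x\), so \(V = \HED(\kappa)_x\); applying \cref{thm:vopenka} to \(C\) and \(x\) then yields that \(\HED(\kappa)\) is a ground of \(V\).

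The main obstacle I anticipate is the bookkeeping needed to confirm that our constructions really land in the hereditary class \(\HED(\kappa)\) (respectively \(\HED(\kappa)_x\)) rather than merely \(\ED(\kappa)\); in particular one must check uniformly that every element of \(V_\lambda\) is ordinal definable from a valid extender embedding (with the closure property \(M_E^{<\kappa} \subseteq M_E\) preserved along the way), so that both the wellorders produced for Vopenka's hypothesis and the witnesses recovering arbitrary sets of ordinals have the required hereditary status.
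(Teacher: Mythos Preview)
Your proposal has a genuine gap in the first step, and it is exactly the obstacle the paper flags: ``the theorem cannot be proved in exactly the same way as \cref{thm:hcd_ground} since Vop\v{e}nka's theorem does not seem to go through.'' The problem is your appeal to ``the canonical \(M\)-wellorder of \(V_\lambda\).'' There is no such thing. The target \(M\) of a strong extender embedding is merely an inner model of ZFC; it contains \emph{some} wellorder of \(V_\lambda\), but not one that is definable in \(M\) (or in \((V,\in,j)\)) from ordinal parameters alone. Your subsequent claim that every \(y\in V_\lambda\) lies in \(\ED(\kappa)\) would, if true, give \(V=\HED(\kappa)\) outright, which is certainly not provable from a strong cardinal. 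In the strongly compact case the wellorder hypothesis of \cref{thm:vopenka} was supplied by Kunen's argument (\cref{thm:kunen}, \cref{cor:uf_wo}): a single fine ultrafilter on \(P_\kappa(P(\delta))\) indexes all \(\kappa\)-complete ultrafilters on \(\delta\) by ordinals. Strongness provides no analogous mechanism for indexing extenders, and without it there is no reason to expect an \(\OD_C\)-wellorder of \(C\cap V_\alpha\), nor that \(\HED(\kappa)\) satisfies AC at all.

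Your second step (showing \(V=\HED(\kappa)_x\) for \(x\subseteq\kappa\) coding \(V_\kappa\)) is essentially correct and matches the paper's \cref{prp:hed_A}. But the paper then combines this with Bukovsk\'y's theorem (\cref{prp:bukovsky}) rather than Vop\v{e}nka's: one shows that \(\HED(\kappa)\) has the \(\theta\)-uniform cover property inside \(\HED(\kappa)_A\) for \(\theta=(2^\kappa)^+\), by replacing the specific parameter \(A\subseteq\kappa\) with an existential quantifier over all \(B\subseteq\kappa\). This yields only that \(\HED(\kappa)\) is a \textit{ZF}-ground, which is all the theorem claims; the ZFC ground is then extracted afterwards via Usuba's theorem (\cref{cor:zfc_ground}).
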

The theorem cannot be proved in exactly the same way as \cref{thm:hcd_ground} 
since Vop\v{e}nka's theorem does not seem to go through. But one can instead use Bukovsky's Theorem. 
Suppose \(\theta\) is a cardinal.
An inner model \(M\) is said to have the \textit{\(\theta\)-uniform cover property}
for all \(X\in M\) and \(f : X\to M\), there
is a function \(F : X\to M\) in \(M\) such that
\(f(x)\in F(x)\) for all \(x\in X\) and 
\(F(x)\) does not surject onto \(\theta\).

The following proposition is a version of Bukovsky's theorem
which follows the proof from \cite{FriedmanSakai} in order to deal with ZF-grounds.
Our situation is nominally different, since our definition of the \(\theta\)-uniform cover property
is somewhat weaker than the one employed there.
\begin{prp}\label{prp:bukovsky}
    Suppose \(M\) is an inner model of \ZF\ with the 
    \(\theta\)-uniform cover property for some cardinal \(\theta\).
    Then every set of ordinals is generic over \(M\).
    \begin{proof}
        Let \(\gamma\) be an ordinal and \(A\) a subset of \(\gamma\). 
        We will show \(A\) is generic over \(M\).

        Let \(\mathcal L\)
        denote the class of infinitary propositional formulae in with \(\gamma\)
        indeterminates \(\langle x_\alpha : \alpha < \gamma\rangle\).
        Let \(\mathcal L_M = \mathcal L\cap M\).
        Let \(\lambda > \gamma\) be a Beth fixed point of cofinality
        at least \(\theta\).
        Let \(\mathcal L_\lambda = \mathcal L_M \cap V_\lambda\).
        Let \(f : P(\mathcal L_\lambda)\to\mathcal L_\lambda\)
        assign to each \(\Gamma\subseteq \mathcal L_\lambda\)
        such that \(A\vDash \bigvee \Gamma\)
        some \(\varphi\in \Gamma\) such that \(A\vDash \varphi\).
        Let \(F : P(\mathcal L_\lambda)\to P(\mathcal L_\lambda)\) be a function in \(M\)
        witnessing the \(\theta\)-uniform cover property for \(f\).
        Note that our assumption that \(\cf(\lambda) \geq \theta\)
        yields that \(\bigvee F(\Gamma) \in \mathcal L_\lambda\)
        for all \(\Gamma\subseteq \mathcal L_\lambda\).

        Let \(T\) be the theory consisting of formulae of the form 
        \(\bigvee \Gamma\to \bigvee F(\Gamma)\) for 
        nonempty \(\Gamma\subseteq \mathcal L_\lambda\).
        Note that \(A\vDash T\).
        Let \(\mathbb P\) be the set of \(\varphi\in \mathcal L_\lambda\) 
        such that \(T\) does not prove \(\neg\varphi\)
        (using any valid proof system for infinitary logic 
        that is definable in \(M\)
        and suffices for the argument in the final paragraph of the
        proof of this proposition). Partially order
        \(\mathbb P\) by setting
        \(\varphi \leq \psi\) if \(T\vdash \varphi\to \psi\).

        Let \(G\subseteq \mathbb P\) be the set of
        \(\varphi\in \mathbb P\) such that \(A\vDash \varphi\).
        We claim \(G\) is an \(M\)-generic filter. We leave the verification that
        \(G\) is a filter to the reader. (See \cite{FriedmanSakai}.) 
        
        Suppose \(D\subseteq \mathbb P\) 
        is a dense set that lies in \(M\). We claim \(A\vDash \bigvee D\).
        Otherwise \(T\) does not prove \(\bigvee D\). Since \(F(D)\subseteq D\),
        it follows that \(T\) does not prove
        \(\bigvee F(D)\). Let \(\varphi = \bigvee F(D)\). 
        Then \(\neg\varphi\in \mathbb P\).
        By density, fix \(\psi\in D\) such that \(\psi\leq \neg\varphi\). 
        By contraposition, \(T\) proves \(\neg\varphi\) implies \(\neg\left(\bigvee D\right)\).
        Therefore since \(T\vdash \psi\to \neg\varphi\),
        \(T\vdash \psi\to \neg(\bigvee D)\). Since
        \(\psi\in D\), \(T\vdash \neg(\bigvee D)\to \neg\psi\),
        and therefore \(T\vdash \psi\to \neg\psi\).
        As a consequence, \(T\vdash \neg\psi\), contradicting that \(\psi\in \mathbb P\).
    \end{proof}
\end{prp}

Relativizing extender definability gives rise to the classes \(\ED_A\)
and \(\HED_A\) for every set parameter \(A\).
\begin{prp}\label{prp:hed_A}
    Suppose \(\kappa\) is strong and \(A\subseteq \kappa\) is such that
    \(V_\kappa\subseteq\HED(\kappa)_A\).
    Then \(V = \HED(\kappa)_A\).
    \begin{proof}
        Fix \(\lambda \geq \kappa\), and we will show \(V_\lambda\subseteq \HED(\kappa)_A\).
        For this, let \(j: V\to M\) be an elementary embedding such that 
        \(\crit(j) = \kappa\), \(j(\kappa) > \lambda\), 
        \(V_\lambda\subseteq M\), and \(M^{<\kappa}\subseteq M\). 
        Then \(j(A)\in \HED(\kappa)_A\)
        and \(V_\lambda \subseteq \HED^M(j(\kappa))_{j(A)} \subseteq \HED(\kappa)_{j(A)}\).
    \end{proof}
\end{prp}
\begin{proof}[Proof of \cref{thm:zf_ground}]
    By \cref{prp:bukovsky} and \cref{prp:hed_A}, it suffices to
    prove that \(\HED(\kappa)\) has the \(\theta\)-uniform cover property
    inside \(\HED(\kappa)_A\) for some \(\theta\). Let \(\theta = (2^\kappa)^+\). 
    
    Suppose \(f : X\to \HED(\kappa)\)
    is \(\ED(\kappa)_A\), and we will find \(F : X\to \HED(\kappa)\) 
    in \(\HED(\kappa)\) witnessing the \(\theta\)-uniform cover property.
    Fix an extender \(E\) and a formula \(\varphi\)
    such that \(f(x) = y\) if and only if \(\varphi(x,y,A,E)\) holds.
    Define \(F : X\to \HED(\kappa)\) by setting 
    \[F(x) = \{y : \exists B\subseteq \kappa\, \varphi(x,y,B,E)\}\]
    Clearly \(F\) is \(\ED(\kappa)\), and so \(F\in \HED(\kappa)\).
    In \(V\), \(F(x)\) is the surjective image of \(P(\kappa)\),
    and so in \(\HED(\kappa)\), \(F(x)\) does not surject onto \((2^\kappa)^+\).
    Since \(f(x) \in F(x)\) for all \(x\in X\), \(F\) is as desired.
\end{proof}

\begin{cor}\label{cor:zfc_ground}
    If \(\kappa\) is a strong cardinal, then \(\HED(\kappa)\) contains a ground.
    \begin{proof}
        A theorem of Usuba \cite{UsubaZF} shows (in ZFC) that every ZF-ground contains
        a ground.
    \end{proof}
\end{cor}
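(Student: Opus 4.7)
The plan is to deduce this directly from \cref{thm:zf_ground} by invoking Usuba's theorem on ZF-grounds. By \cref{thm:zf_ground}, the hypothesis that \(\kappa\) is strong implies that \(\HED(\kappa)\) is a ZF-ground of \(V\); that is, there is some partial order \(\mathbb P\in \HED(\kappa)\) and an \(\HED(\kappa)\)-generic filter \(G\subseteq \mathbb P\) such that \(V = \HED(\kappa)[G]\), with the caveat that \(\HED(\kappa)\) is only known to satisfy ZF (not necessarily AC). Usuba's theorem from \cite{UsubaZF} then guarantees that every ZF-ground \(W\) of a model of ZFC contains a ZFC-ground; applying this to \(W = \HED(\kappa)\) yields an inner model \(N\subseteq \HED(\kappa)\) that is a ground of \(V\), which is exactly the conclusion.

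The main work in this corollary is therefore entirely packaged inside the two cited inputs: \cref{thm:zf_ground}, which is the substantive step (using Bukovsky's theorem together with the strongness of \(\kappa\) to produce the uniform cover property and the relativization argument in \cref{prp:hed_A}), and Usuba's ZF-ground theorem. The main obstacle, assuming one did not wish to cite Usuba as a black box, would be to reconstruct his argument internally, essentially showing that the class of ZFC-grounds contained in a given ZF-ground is downward directed and has a minimum. This would involve reproducing the DDG-style arguments in the ZF setting, which is substantially harder than the rest of the proof and not necessary here given that \cite{UsubaZF} is available.

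An alternative approach, which I would not pursue, would be to try to sharpen \cref{thm:zf_ground} directly to produce a ZFC-ground, rather than only a ZF-ground. This would presumably require wellordering \(\HED(\kappa)\) in a definable way, analogously to \cref{cor:uf_wo} in the strongly compact case. Without strong compactness, however, \cref{thm:kunen} is not available, and it is not at all clear that such a wellorder exists; this is exactly the obstruction that forced the weakening from \(\HCD(\kappa)\) to \(\HED(\kappa)\) and from ground to ZF-ground in the first place. Hence routing through Usuba's theorem is both the shortest and the most natural path.
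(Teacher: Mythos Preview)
Your proof is correct and follows exactly the same route as the paper: invoke \cref{thm:zf_ground} to get that \(\HED(\kappa)\) is a ZF-ground, then apply Usuba's theorem from \cite{UsubaZF} to extract a ZFC-ground inside it. The additional commentary on alternatives is reasonable context but not part of the argument itself.
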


\begin{proof}[Proof of \cref{thm:strong}]
    The theorem is now immediate from \cref{cor:ed_agree} and \cref{cor:zfc_ground}.
\end{proof}
\section{Ultrapower axioms}\label{UASection}
The Ultrapower Axiom is a combinatorial principle that clarifies the theory of
countably complete ultrafilters. Here we will show it implies the uniqueness of
elementary embeddings. We will also consider a slight weakening of the Ultrapower Axiom
called the Weak Ultrapower Axiom, which until this work 
was not known to have any consequences at all.

An elementary embedding \(i : P\to Q\) is \textit{close} if 
for all \(A\in Q\), \(i^{-1}[A]\in P\). If \(i\) is an ultrapower
embedding, we say in this case that \(i\) is \textit{internal},
since for ultrapower embeddings, closeness is equivalent to
the existence of an ultrafilter \(U\in P\) and 
an isomorphism \(k : \Ult(P,U)\to Q\) such that \(k\circ j_U= i\).
\begin{defn}
    The Ultrapower Axiom states that for any inner models \(P_0\) and \(P_1\) 
    admitting internal ultrapower embeddings \(j_0 : V\to P_0\)
    and \(j_1 : V\to P_1\), there exists an inner model \(N\) admitting 
    internal ultrapower embeddings \(k_0 : P_0\to N\)
    and \(k_1 : P_1\to N\) such that \(k_0\circ j_0 = k_1\circ j_1\).
\end{defn}
Although it is not immediate from our formulation here,
UA is a first-order statement. In fact, it is equivalent to a \(\Pi_2\)-sentence.
\subsection{The Ultrapower Axiom}
In this subsection, we show that UA implies the uniqueness of elementary embeddings.
\begin{thm}\label{thm:ua}
    Assume the Ultrapower Axiom. Then the uniqueness of elementary embeddings holds.
\end{thm}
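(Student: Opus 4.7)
The plan is to reduce \cref{thm:ua} to the case where \(j_0, j_1\) are ultrapower embeddings, apply UA directly to obtain a common refinement, and then exploit the agreement on ordinals (\cref{thm:unique_ords}) to force the seeds of the two ultrapowers to coincide.

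First, I would reduce to the ultrapower case. Given elementary \(j_0, j_1 : V \to M\), \cref{thm:ultrapowers} produces almost-ultrapower embeddings \(i_0, i_1 : V \to N\) and \(k : N \to M\) with \(j_0 = k \circ i_0\) and \(j_1 = k \circ i_1\). Under UA the eventual singular cardinals hypothesis holds (one can verify this from the structure of countably complete ultrafilters under UA), so \cref{cor:almost_ultra} upgrades \(i_0, i_1\) to genuine ultrapower embeddings. Since \(k\) is injective, it suffices to show \(i_0 = i_1\); so assume \(j_0, j_1 : V \to M\) are themselves ultrapower embeddings, and fix ordinal seeds \(a_0, a_1 \in M\) for which \(M = H^M(j_s[V] \cup \{a_s\})\) and \(U_s = \{A : a_s \in j_s(A)\}\) is the derived \(V\)-ultrafilter representing \(j_s\).

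Next, apply UA to \(j_0, j_1\) to obtain internal ultrapower embeddings \(k_0, k_1 : M \to N\) with \(k_0 \circ j_0 = k_1 \circ j_1\); call this common map \(j : V \to N\). The key observation is that \(U_s\) equals the \(V\)-ultrafilter derived from \(j\) using the ordinal \(k_s(a_s) \in N\), since \(k_s(a_s) \in j(A)\) if and only if \(a_s \in j_s(A)\). Therefore, showing \(k_0(a_0) = k_1(a_1)\) gives \(U_0 = U_1\); since the Mostowski collapses of wellfounded ultrapowers are unique, this forces \(j_0 = j_1\).

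The main obstacle is thus the identification \(k_0(a_0) = k_1(a_1)\) inside \(N\). My approach here is to use \cref{thm:unique_ords} together with the UA-theoretic structure of the comparison. Concretely, the pair \((k_0, k_1)\) supplied by UA can be taken to be the \emph{minimal} comparison in the seed order; the seeds \(a_0, a_1\) are ordinal generators of \(j_0, j_1\) respectively, and minimality propagates this property so that \(k_0(a_0)\) and \(k_1(a_1)\) are each definable in \(N\) from parameters in \(j[V]\) via \emph{the same} ordinal data arising from \(j \restriction \mathrm{Ord}\). Combined with the uniqueness on the ordinals applied to the tautologically equal embeddings \(k_0 \circ j_0 = k_1 \circ j_1\), this should pin down the two seeds to a single ordinal in \(N\). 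Verifying this minimality and the resulting ordinal definability of the seeds is the delicate step, drawing on the same Kunen-style commuting-ultrapowers arguments used in \cref{lma:cd_agree}.
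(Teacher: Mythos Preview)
Your reduction to the ultrapower case is essentially the paper's, though the paper establishes directly (\cref{lma:ua_extender}) that under UA every embedding into a countably closed model is an extender embedding, rather than routing through eventual SCH.

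The genuine gap is in the ultrapower case. Your plan is to show \(k_0(a_0)=k_1(a_1)\) and hence \(U_0=U_1\), but this target is wrong: even when \(j_0=j_1\), distinct ordinal seeds \(a_0\neq a_1\) (for example \(\kappa\) versus \(\kappa+1\) in a normal ultrapower) give distinct derived ultrafilters \(U_0\neq U_1\). So unless you canonically fix the seeds you are trying to prove something false, and even with a canonical choice your ``minimality propagates so that \(k_0(a_0)\) and \(k_1(a_1)\) are definable from the same ordinal data'' is not an argument; no minimality property of UA comparisons produces this identification, and invoking \cref{thm:unique_ords} for the single embedding \(k_0\circ j_0=k_1\circ j_1\) is vacuous.

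The paper's argument bypasses seeds entirely. Since \(k_0,k_1:M\to N\) are \emph{internal} ultrapower embeddings of \(M\), they are definable over \(M\), so Woodin's theorem (\cref{thm:Woodin}) applied inside \(M\) gives \(k_0\restriction\Ord=k_1\restriction\Ord\). Now for any set of ordinals \(A\),
\[
j_0(A)=k_0^{-1}[k_0(j_0(A))]=k_0^{-1}[k_1(j_1(A))]=k_1^{-1}[k_1(j_1(A))]=j_1(A),
\]
using \(k_0\circ j_0=k_1\circ j_1\) for the second equality and \(k_0\restriction\Ord=k_1\restriction\Ord\) for the third (the preimage of a set of ordinals under either map is the same). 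Since every set is coded by a set of ordinals, \(j_0=j_1\). That one-line preimage computation is the missing idea.
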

The Ultrapower Axiom in general is only really useful for ultrapower embeddings,
so the generality of this theorem may seem surprising until one realizes 
that by \cref{thm:ultrapowers},
at least under cardinal arithmetic assumptions,
the uniqueness of elementary embeddings reduces 
to the uniqueness of ultrapower embeddings,
which is easily proved from UA:
\begin{lma}[UA]\label{lma:ua_unique_ultrapowers}
    Suppose \(j_0,j_1 : V\to M\) are ultrapower embeddings.
    Then \(j_0 = j_1\).
    \begin{proof}
        Let \((k_0,k_1) : M\to N\) be an internal ultrapower comparison of 
        \((j_0,j_1)\). Note that 
        \(k_0\circ j_0 = k_1\circ j_1\) by the definition of a comparison
        and \(k_0\) and  \(k_1\) agree on the ordinals
        by \cref{thm:Woodin}. Since every set is constructible from a set
        of ordinals, it suffices to show that for all sets of ordinals
        \(A\), \(j_0(A) = j_1(A)\). But 
        \[j_0(A) = k_0^{-1}[k_0(j_0(A))] = k_0^{-1}[k_1(j_1(A))] 
        = k_1^{-1}[k_1(j_1(A))] = j_1(A)\qedhere\]
    \end{proof}
\end{lma}
All that remains to prove the uniqueness of elementary embeddings from UA
is to prove that the conclusion of \cref{lma:extender} 
follows from UA without appealing to SCH.
\begin{lma}[UA]\label{lma:ua_extender}
    Suppose \(M\) is a countably closed inner model and \(j : V\to M\) is an elementary embedding. 
    Then \(j\) is an extender embedding.
    \begin{proof}
        Suppose not. Then there is a strong limit cardinal \(\lambda\) of cofinality \(\omega\)
        that is closed under \(j\) and a limit of generators of \(j\). 
        Since \(j\) is continuous at ordinals of cofinality \(\omega\), \(j(\lambda) = \lambda\).
        Let \(\langle \nu_n : n < \omega\rangle\) be an increasing sequence
        of generators of \(j\) whose limit is \(\lambda\).
        Let \(U\) be the ultrafilter on \(\lambda^\omega\) derived from \(j\)
        using \(\langle \nu_n : n < \omega\rangle\). Then \(\lambda < \lambda_U \leq \lambda^\omega\).
        Let \(\gamma = \lambda^\sigma\) be the least cardinal greater than \(\lambda\)
        that carries a countably complete uniform ultrafilter. By \cite{UA},
        since \(\lambda\) is a strong limit cardinal, \(\gamma\) is either measurable
        or \(\gamma = \lambda^+\). Since \(\gamma \leq \lambda_U \leq \lambda^\omega\),
        \(\gamma = \lambda^+\). Since \(\lambda^+\) carries a countably complete uniform ultrafilter,
        \cref{lma:solovay_SCH} implies \(2^\lambda = \lambda^+\). 
        It follows that \(\lambda_U = \lambda^+\),
        but \(j_U(\lambda^+) \leq j(\lambda^+) = \lambda^+\), which is a contradiction.
    \end{proof}
\end{lma}

\begin{proof}[Proof of \cref{thm:ua}]
    By \cref{thm:ultrapowers},
    one can reduce to proving the uniqueness of embeddings that are almost ultrapowers,
    but by \cref{lma:ua_extender}, if an embedding is almost an ultrapower, it actually is an ultrapower.
    By \cref{lma:ua_unique_ultrapowers}, 
    the uniqueness of ultrapower embeddings is a consequence of UA.
\end{proof}
\subsection{The Weak Ultrapower Axiom}
A model \(Q\) is an \textit{internal ultrapower} of a model \(P\) if there is an internal ultrapower
embedding from \(P\) to \(Q\).
In slogan form, the Ultrapower Axiom states: \textit{any two ultrapowers of the universe
have a common internal ultrapower.} Like so many slogans, this is not completely accurate,
since the Ultrapower Axiom contains an additional requirement
amounting to the commutativity of a certain diagram of ultrapowers. 
This discrepancy raises a number of questions.
\begin{defn}
    The \textit{Weak Ultrapower Axiom} states that any two ultrapowers of the
    universe have a common internal ultrapower.
\end{defn}
By ultrapower, we here mean \textit{wellfounded} ultrapower. While UA is \(\Pi_2\),
it is not clear whether Weak UA is. It is first-order, though, and in fact it is
\(\Pi_3\).

Does the Weak Ultrapower Axiom imply the Ultrapower Axiom? Assuming the uniqueness of 
elementary embeddings, the answer is obviously yes.
\begin{prp}
    The Ultrapower Axiom is equivalent to the conjunction of the Weak Ultrapower Axiom
    and the uniqueness of ultrapower embeddings.\qed
\end{prp}
Using the results of this paper, one can prove some of the consequences of UA
assuming just Weak UA by increasing the large cardinal hypotheses.
We only sketch the proofs.
\begin{defn}
    If \(M_0\) and \(M_1\) are inner models and \(\alpha_0\) and \(\alpha_1\)
    are ordinals, we write \((M_0,\alpha_0) \sim (M_1,\alpha_1)\) if there 
    exist elementary embeddings \(k_0 : M_0\to N\) and \(k_1 : M_1\to N\)
    to a common inner model \(N\)
    such that \(k_0(\alpha_0) = k_1(\alpha_1)\).
\end{defn}
It is unclear whether this relation is first-order definable, but this will
not be an issue.
\begin{thm}\label{thm:sim}
    Suppose \(\kappa\) is an extendible cardinal and \(U_0\) and \(U_1\) are \(\kappa^+\)-complete 
    ultrafilters on ordinals 
    such that \((M_{U_0},\id_{U_0}) \sim (M_{U_1},\id_{U_1})\). Then \(U_0 = U_1\).
    \begin{proof}
        For \(n = 0,1\), let \(j_n :V\to M_n\)
        be the ultrapower embedding associated to \(U_n\) and let
        \(\alpha_n = \id_{U_n}\). 
        Let \(N\) be a model of set theory 
        admitting elementary embeddings \(k_0 : M_0\to N\) and \(k_1 : M_1\to N\)
        such that \(k_0(\alpha_0) = k_1(\alpha_1)\).
        Note that
        \(k_0\circ j_0\) and \(k_1\circ j_1\) agree on \(\HCD\)
        by \cref{lma:cd_agree}.
        Therefore \(W = U_0\cap \HCD = U_1\cap \HCD\). Since \(W\) is \(\kappa^+\)-complete and 
        \(\HCD = \HCD(\kappa)\),
        \(W \in \HCD\). Since \(V\) is a generic extension of \(\HCD\)
        for a forcing of size less than the completeness of \(W\) (\cref{prp:vop_bound}), 
        the upwards L\'evy-Solovay theorem implies that
        \(U_0\) is the filter generated by \(W\). Similarly,
        \(U_1\) is the filter generated by \(W\), so \(U_0 = U_1\).
    \end{proof}
\end{thm}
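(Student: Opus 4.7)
The plan is to show that $U_0$ and $U_1$ are both extensions of a common $\kappa^+$-complete ultrafilter $W$ lying inside $\HCD(\kappa)$, and then use the upwards L\'evy-Solovay theorem together with the fact that $V$ is a small generic extension of $\HCD(\kappa)$ to conclude $U_0 = U_1$. Let $j_n:V\to M_n$ be the ultrapower by $U_n$ with seed $\alpha_n = \id_{U_n}$, so that $A\in U_n$ iff $\alpha_n\in j_n(A)$. The hypothesis supplies an inner model $N$ and elementary embeddings $k_n:M_n\to N$ with $k_0(\alpha_0) = k_1(\alpha_1)$.

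The first main step is to show that the composed embeddings $k_0\circ j_0,\,k_1\circ j_1:V\to N$ agree on the inner model $\HCD$. By \cref{thm:hcd_extendible}, $\HCD = \HCD(\kappa)$, so it suffices to prove agreement on $\HCD(\kappa)\subseteq\CD(\kappa)$. This is essentially the content of \cref{lma:cd_agree}: its proof uses only the agreement of the underlying embeddings on ordinals (upgraded here from \cref{thm:Woodin} to the second-order \cref{thm:unique_ords}) and the Kunen commuting ultrapowers calculation \cref{thm:kunen_commute}, both of which can be transported through postcomposition with $k_0$ and $k_1$.

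Combining this agreement with $k_0(\alpha_0) = k_1(\alpha_1)$, the trace ultrafilters coincide: for $A\in\HCD$ a subset of the underlying ordinal,
\[
A\in U_0 \iff \alpha_0\in j_0(A) \iff k_0(\alpha_0)\in k_0(j_0(A)) = k_1(j_1(A)) \iff \alpha_1\in j_1(A) \iff A\in U_1.
\]
Set $W := U_0\cap\HCD = U_1\cap\HCD$. Being a $\kappa^+$-complete, hence $\kappa$-complete, ultrafilter on an ordinal, $W\in\CD(\kappa)$, so $W\in\HCD(\kappa)=\HCD$. Finally, by \cref{prp:vop_bound}, $V$ is a generic extension of $\HCD$ by a forcing in $\HCD$ small enough that the completeness of $W$ exceeds it; the upwards L\'evy-Solovay theorem then grants $W$ a unique $\kappa^+$-complete extension in $V$, and both $U_0$ and $U_1$ are such extensions, so they are equal.

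The step I expect to require the most care is the first: \cref{lma:cd_agree} is phrased for single extender embeddings into a common model, whereas here the two maps are compositions, so one has to reverify that the Kunen commuting calculation transports cleanly through $k_0$ and $k_1$. This should go through because the proof of \cref{lma:cd_agree} is really a statement about how any two embeddings into a common target that agree on ordinals must act the same way on sets definable from internal ultrapowers, and our compositions satisfy that ordinal agreement by \cref{thm:unique_ords}.
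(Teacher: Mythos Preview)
Your approach is exactly the paper's. Two small corrections are worth flagging. First, \(W = U_0\cap\HCD\) is only an \(\HCD\)-ultrafilter, not a \(V\)-ultrafilter, so the reason \(W\in\CD(\kappa)\) is that \(W\) is definable from \(U_0\in\UF_\kappa(\Ord)\), not that \(W\) itself lies in \(\UF_\kappa(\Ord)\). Second, your proposed justification for carrying \cref{lma:cd_agree} over to the compositions---``transport Kunen through \(k_0,k_1\)''---does not quite work as stated: after applying \cref{thm:kunen_commute} to get \(j_n(i) = i\restriction M_n\), one is left needing \(k_0(j_0(D)) = k_1(j_1(D))\) for the ultrafilter \(D\) giving \(i\), and \(D\) is not an ordinal, so \cref{thm:unique_ords} does not settle it. The clean fix (for which the paper's bare citation of \cref{lma:cd_agree} is shorthand; compare the proof of \cref{thm:unique_ground}) is to first factor \(k_n\circ j_n = h\circ e_n\) via \cref{thm:ultrapowers} and \cref{cor:almost_ultra}, obtaining genuine ultrapower embeddings \(e_0,e_1:V\to N'\) into a common target, and then apply \cref{lma:cd_agree} literally to \(e_0,e_1\).
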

\cref{thm:sim} enables us to define a wellorder of the \(\kappa^+\)-complete ultrafilters.
\begin{defn}
    If \(M_0\) and \(M_1\) are inner models and \(\alpha_0\) and \(\alpha_1\)
    are ordinals, then  \[(M_0,\alpha_0) \ke (M_1,\alpha_1)\] if there is an inner model 
    \(N\) admitting an elementary embedding \(k_0 : M_0\to N\) and
    an internal ultrapower embedding \(k_1 : M_1\to N\)
    with \(k_0(\alpha_0) < k_1(\alpha_1)\).
    The \textit{weak Ketonen order} is defined on countably complete ultrafilters \(U_0\) and \(U_1\) by
    setting \(U_0 \ke^* U_1\) if \((M_{U_0},\id_{U_0})\ke (M_{U_1},\id_{U_1})\).
\end{defn}
Since we did not require that \(k_0\) is an ultrapower embedding,
it is unclear whether the weak Ketonen order is first-order definable, but under the large cardinal hypotheses
we are assuming (or simply the eventual SCH), \cref{thm:ultrapowers} implies
that \(k_0\) must be an ultrapower embedding. (Actually, under Weak UA with no cardinal 
arithmetic hypothesis, one can show that the weak Ketonen order 
is always witnessed by a pair of internal ultrapower embeddings.)
In either context, it follows that
the Ketonen order on models is wellfounded. By \cref{thm:sim},
this yields:
\begin{cor}[Weak UA] If \(\kappa\) is an extendible cardinal,
    the class of \(\kappa^+\)-complete ultrafilters on ordinals is wellordered by the
    weak Ketonen order.\qed
\end{cor}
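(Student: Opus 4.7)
The plan is to verify the three ingredients of a wellorder on $\kappa^+$-complete ultrafilters on ordinals: wellfoundedness, trichotomy, and the absence of ties (so that equivalence under the comparison collapses to equality). Wellfoundedness has already been handled in the passage preceding the corollary: under Weak UA together with extendibility of $\kappa$, which via \cref{thm:ultrapowers} forces the ``non-ultrapower'' side of a $\ke$-comparison to be an ultrapower embedding anyway, the weak Ketonen order on pairs $(M,\alpha)$ is wellfounded. Wellfoundedness of $\ke^*$ on ultrafilters is immediate: an infinite descending chain $U_0,U_1,\dots$ would descend via $(M_{U_n},\id_{U_n})$.

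For trichotomy, I would fix $\kappa^+$-complete ultrafilters $U_0$ and $U_1$ on ordinals and let $j_{U_n}: V\to M_{U_n}$ be their ultrapowers. Applying Weak UA directly to $j_{U_0}$ and $j_{U_1}$ yields internal ultrapower embeddings $k_0 : M_{U_0}\to N$ and $k_1 : M_{U_1}\to N$ for some inner model $N$. The ordinals $k_0(\id_{U_0})$ and $k_1(\id_{U_1})$ now both lie in $N$ and are comparable. If $k_0(\id_{U_0}) < k_1(\id_{U_1})$, then since $k_1$ is internal, the definition of $\ke^*$ gives $U_0\ke^* U_1$; if the strict inequality is reversed, we get $U_1\ke^* U_0$ by symmetry, using that $k_0$ is also internal. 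In the remaining case $k_0(\id_{U_0}) = k_1(\id_{U_1})$, the triple $(N,k_0,k_1)$ is precisely a witness that $(M_{U_0},\id_{U_0})\sim(M_{U_1},\id_{U_1})$, and \cref{thm:sim} then yields $U_0 = U_1$.

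This gives trichotomy: for any two $\kappa^+$-complete ultrafilters on ordinals, exactly one of $U_0\ke^* U_1$, $U_1\ke^* U_0$, or $U_0 = U_1$ holds. The remaining order-theoretic properties are free: irreflexivity follows from wellfoundedness, and transitivity follows because any putative failure of transitivity, combined with trichotomy, produces a finite $\ke^*$-cycle, which wellfoundedness forbids. So $\ke^*$ is a strict linear wellorder on the class of $\kappa^+$-complete ultrafilters on ordinals.

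The only nontrivial step is the trichotomy argument, and the one place care is needed is making sure \cref{thm:sim} applies in the equality-of-ordinals case. That is painless here because \cref{thm:sim} was stated precisely in terms of the existence of a common target model with matching ordinal images, with no internality required of the comparing embeddings, so the situation produced by Weak UA slots directly into its hypothesis.
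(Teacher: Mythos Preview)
Your argument is correct and is precisely the approach the paper has in mind: the corollary is stated with a bare \qed\ because the paper regards it as immediate from the preceding paragraph (wellfoundedness) together with \cref{thm:sim} (antisymmetry/equality case), with Weak UA supplying the common internal ultrapower needed for totality. You have simply spelled out those three ingredients explicitly.
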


\begin{cor}[Weak UA]\label{thm:wua_od}
    If \(\kappa\) is an extendible cardinal,
    every \(\kappa^+\)-complete ultrafilter is ordinal definable.\qed
\end{cor}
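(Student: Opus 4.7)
The plan is to extract ordinal definability directly from the wellorder provided by the preceding corollary. That corollary, a consequence of \cref{thm:sim}, says that the weak Ketonen order $\ke^*$ wellorders the class of $\kappa^+$-complete ultrafilters on ordinals. Once one knows this wellorder is itself ordinal definable in $V$, each ultrafilter is characterized uniquely by its ordinal rank and so is ordinal definable. Thus the whole corollary reduces to checking that $\ke^*$ admits an ordinal-definable (indeed, parameter-free) presentation.

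The key step is therefore to verify this definability. The raw definition of $(M_0,\alpha_0)\ke (M_1,\alpha_1)$ existentially quantifies over an inner model $N$ and an elementary embedding $k_0 : M_0\to N$, which is a priori second-order. However, since $\kappa$ is extendible, the eventual SCH holds (via \cref{lma:solovay_SCH}), so \cref{thm:ultrapowers} together with \cref{cor:almost_ultra} allows any such external $k_0$ to be replaced by an ultrapower embedding of $M_0$ landing in a common target with $k_1$. With both $k_0$ and $k_1$ coded by ultrafilters on $M_0$ and $M_1$ respectively, the inequality $k_0(\alpha_0) < k_1(\alpha_1)$ becomes an ordinary first-order assertion. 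Consequently, restricted to the class of $\kappa^+$-complete ultrafilters on ordinals, $\ke^*$ is defined by a formula without parameters.

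With definability in hand, the conclusion is immediate: for any $\kappa^+$-complete ultrafilter $U$ on an ordinal, let $\rho(U)$ be its rank in $\ke^*$. Then $U$ is the unique $\kappa^+$-complete ultrafilter on an ordinal of weak Ketonen rank $\rho(U)$, so $U$ is definable from the ordinal parameter $\rho(U)$. For a $\kappa^+$-complete ultrafilter on an arbitrary set, one transports it along the ordinal-definable least-rank coding of its underlying set to reduce to the previous case.

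The only genuine obstacle is the definability of $\ke^*$ discussed in the second paragraph; I expect this to go through cleanly because the extendibility of $\kappa$ delivers both the cardinal arithmetic needed to invoke \cref{cor:almost_ultra} and the global reduction to ultrapowers that converts the second-order clause in the definition of $\ke$ into a first-order one.
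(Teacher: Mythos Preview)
Your approach is correct and matches the paper's: the corollary carries a \qed\ precisely because it is immediate from the preceding wellorder, and the paragraph just before that corollary makes exactly your point---under the eventual SCH (available here via extendibility and Solovay's theorem), the witness \(k_0\) in the definition of \(\ke\) may be taken to be an ultrapower embedding, so \(\ke^*\) is first-order and parameter-free, and each ultrafilter is then definable from its rank.

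One caution about your final sentence: the extension to ultrafilters on arbitrary underlying sets is both unnecessary and does not go through as written. There is no ``ordinal-definable least-rank coding'' of a set \(X\) that is not itself ordinal definable, and indeed a \(\kappa^+\)-complete (even principal) ultrafilter concentrated on a non-OD point cannot be ordinal definable, since it determines that point. In context the statement should be read as concerning ultrafilters on ordinals; this is all that is used downstream to conclude \(\HCD=\HOD\).
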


\begin{thm}[Weak UA]
    If there is an extendible cardinal, then \(V\) is a generic extension of \(\HOD\).
    \begin{proof}
        Since there is an extendible cardinal,
        \(V\) is a generic extension of \(\HCD\) by \cref{thm:hcd_ground} and 
        \cref{thm:hcd_extendible}.
        By \cref{thm:wua_od}, \(\HCD = \HOD\).
    \end{proof}   
\end{thm}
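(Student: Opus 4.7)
The plan is to chain together three results that have already been established in this section: Theorem \ref{thm:hcd_ground} (for strongly compact $\kappa$, $\HCD(\kappa)$ is a ground of $V$), Theorem \ref{thm:hcd_extendible} (for extendible $\kappa$, $\HCD(\kappa) = \HCD$), and Corollary \ref{thm:wua_od} (under Weak UA, if $\kappa$ is extendible then every $\kappa^+$-complete ultrafilter on an ordinal is ordinal definable). The Weak UA hypothesis enters only through the last of these; the first two are pure large cardinal results.

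First I would note that since every extendible cardinal is strongly compact, \cref{thm:hcd_ground} applies, giving that $\HCD(\kappa)$ is a ground of $V$. By \cref{thm:hcd_extendible}, this ground is exactly $\HCD$. So the whole question reduces to identifying $\HCD$ with $\HOD$: once we show $\HCD = \HOD$, the theorem follows immediately.

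The inclusion $\HOD \subseteq \HCD$ is automatic, since any ordinal-definable set is definable from a principal ultrafilter on an ordinal, and principal ultrafilters are $\delta$-complete for every cardinal $\delta$. For the reverse inclusion, the plan is to use \cref{thm:wua_od}. The subtlety to unpack is that \cref{thm:wua_od} gives ordinal definability only for $\kappa^+$-complete ultrafilters, whereas a priori $\HCD$ is defined using ultrafilters of all infinite completenesses. But the inner models $\HCD(\delta)$ are decreasing in $\delta$, and by the general definition $\HCD \subseteq \HCD(\kappa^+) \subseteq \HCD(\kappa)$; since $\HCD(\kappa) = \HCD$ by \cref{thm:hcd_extendible}, all three classes coincide, and in particular $\HCD = \HCD(\kappa^+)$. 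Thus every $x\in \HCD$ and every element of its transitive closure is definable from some $\kappa^+$-complete ultrafilter on an ordinal; by \cref{thm:wua_od} each such ultrafilter is itself ordinal definable, so $x$ is hereditarily ordinal definable, i.e. $x\in \HOD$.

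Having established $\HCD = \HOD$, the conclusion is immediate from the first paragraph: $V$ is a generic extension of $\HCD = \HOD$. There is no real obstacle to execute this plan beyond the bookkeeping of the previous paragraph — the genuine work has already been done in \cref{thm:hcd_ground}, \cref{thm:hcd_extendible}, and the preparation of \cref{thm:wua_od}. If anything, the only step requiring a moment's care is the observation that the completeness mismatch between $\HCD$ (defined via arbitrarily complete ultrafilters) and \cref{thm:wua_od} (which only handles $\kappa^+$-complete ones) disappears once one invokes $\HCD = \HCD(\kappa) = \HCD(\kappa^+)$.
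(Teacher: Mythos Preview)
Your proposal is correct and follows exactly the paper's approach: combine \cref{thm:hcd_ground} and \cref{thm:hcd_extendible} to see that \(V\) is a generic extension of \(\HCD\), then invoke \cref{thm:wua_od} to conclude \(\HCD = \HOD\). The paper simply asserts the last equality without comment, whereas you have (correctly) unpacked why the \(\kappa^+\)-completeness in \cref{thm:wua_od} suffices via \(\HCD \subseteq \HCD(\kappa^+)\); note that for this inclusion alone you do not even need the full equality \(\HCD = \HCD(\kappa^+)\), though what you wrote is of course fine.
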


We now bound the size of the forcing taking \(\HOD\) to \(V\).
Somewhat surprisingly, one can show that it is \textit{strictly smaller}
than the least extendible.
\begin{thm}[Weak UA]\label{thm:weak_ua_bound}
    Suppose there is an extendible cardinal. Then \(V\) is a generic extension of \(\HOD\)
    by a forcing in \(V_\delta\) where \(\delta\) is the least \(\Sigma_3\)-reflecting cardinal.
\end{thm}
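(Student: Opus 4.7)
My plan is to combine the preceding theorem with $\Sigma_3$-reflection in order to lower the bound on the generating forcing. The preceding theorem already delivers $V=\HOD[G_0]$ for some set-generic $G_0$ over a forcing $\mathbb{P}_0\in\HOD$, where Proposition~\ref{prp:vop_bound} bounds $|\mathbb{P}_0|$ in terms of the least extendible cardinal; the task is to exhibit such a witnessing pair already inside $V_\delta$.

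The first step is to express the assertion ``$V$ is a set-generic extension of $\HOD$'' as a $\Sigma_3$ sentence. Since $\HOD$ is $\Sigma_2$-definable, the condition ``$\mathbb{P}\in\HOD$'' is $\Sigma_2$, and ``$V=\HOD[G]$'' unpacks to the $\Pi_3$ statement ``$\forall x\,\exists\tau\in\HOD^{\mathbb{P}}(x=\tau^G)$''; existentially quantifying over $\mathbb{P}$ and $G$ yields a $\Sigma_3$ sentence, which is true in $V$ by the preceding theorem. Applying $V_\delta\prec_{\Sigma_3}V$, one obtains $\mathbb{P}\in\HOD^{V_\delta}\cap V_\delta$ and $G\in V_\delta$ with $V_\delta=\HOD^{V_\delta}[G]$. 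The $\Sigma_2$-correctness of $V_\delta$ gives $\HOD^{V_\delta}=\HOD\cap V_\delta$, and since $|\mathbb{P}|<\delta$ every maximal antichain of $\mathbb{P}$ lying in $\HOD$ already lies in $\HOD\cap V_\delta$. Consequently $G$ is genuinely $\HOD$-generic for $\mathbb{P}$, producing $\mathbb{P}\in\HOD\cap V_\delta$ with $V_\delta\subseteq\HOD[G]\subseteq V$.

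The hard part, and the main obstacle, is bootstrapping from $V_\delta\subseteq\HOD[G]$ to the full equality $V=\HOD[G]$. My plan is to invoke the intermediate-model theorem applied to $\HOD\subseteq\HOD[G]\subseteq V=\HOD[G_0]$: this yields a complete subalgebra $\mathbb{B}\subseteq\mathrm{ro}(\mathbb{P}_0)$ in $\HOD$ with $\HOD[G]=\HOD[G_0\cap\mathbb{B}]$, so that $V$ arises from $\HOD[G]$ by forcing with the quotient $\mathrm{ro}(\mathbb{P}_0)/\mathbb{B}$. The plan is to show this quotient is trivial by noting that it is ordinal-definable from parameters in $V_\delta\subseteq\HOD[G]$, using Laver--Hamkins--Woodin definability of $\HOD$ in each of its generic extensions together with the fact that the full $\Sigma_3$-reflection forces the definable data of the quotient to already be captured by $G$. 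An attractive alternative, if the quotient step proves troublesome, is to re-run the Vop\v{e}nka construction of Theorem~\ref{thm:hcd_ground} entirely inside $V_\delta$ -- replacing the strongly compact parameter by a reflected fragment below $\delta$ -- and then apply Usuba's downward-directed grounds theorem to identify the resulting small ground inside $V_\delta$ with $\HOD\cap V_\delta$, canonically lifting it to a ground of $V$.
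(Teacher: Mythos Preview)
Your quantifier count is off, and this is the crux of the matter: existentially quantifying a $\Pi_3$ matrix yields a $\Sigma_4$ sentence, not a $\Sigma_3$ one. With the global $\Sigma_2$ definition of $\HOD$, the formula ``$\forall x\,\exists\tau\in\HOD^{\mathbb P}\,(\tau^G=x)$'' is, as you say, $\Pi_3$; but then ``$\exists\mathbb P\,\exists G\,(V=\HOD[G])$'' is only $\Sigma_4$, and $\Sigma_3$-reflection at $\delta$ does not apply. This is not a minor slip---it is exactly the obstacle the paper's argument is designed to get around, and your proposal contains no mechanism for lowering the complexity.

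The missing ingredient is \cref{thm:weak_ua_appx}: under Weak UA with an extendible, $\HOD$ has the $\kappa$-approximation and cover properties for $\kappa$ the least strongly compact (and $\kappa<\delta$). Hamkins's pseudoground definability theorem then makes $\HOD$ uniformly definable in each $H(\gamma)$ from the parameter $H=\HOD\cap H(\kappa^+)\in V_\delta$. Replacing the global $\Sigma_2$ definition of $\HOD$ by this local one drops ``$V=\HOD[G]$'' to $\Pi_2$ in the parameters $H,\mathbb P,G$, so the full existential statement is $\Sigma_3$ in $H$ and genuinely reflects to $V_\delta$. Moreover, once you have specific $\mathbb P,G\in V_\delta$ witnessing the reflected sentence, the $\Pi_2$ assertion ``$V=\HOD[G]$'' (with those parameters) transfers straight back up to $V$ by correctness of $V_\delta$. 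So there is no bootstrapping step at all: your proposed intermediate-model and Usuba manoeuvres are unnecessary, and in any case the sketches you give for them (e.g.\ concluding triviality of the quotient from its ordinal definability) do not obviously go through.
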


Note that if \(U_0 \sim U_1\), then \(U_0\cap \HOD = U_1\cap \HOD\).
Combining this with the fact that the weak Ketonen order is a prewellorder
whose induced equivalence relation extends \(\sim\), one obtains:
\begin{lma}[Weak UA]\label{lma:hod_amenable}
    Suppose \(U\) is a countably complete ultrafilter on an ordinal. Then \(U\cap \HOD\in \HOD\).\qed
\end{lma}
We will use this to show that \(\HOD\) has the \(\kappa\)-approximation and cover properties
at the least strongly compact cardinal. This in turn yields that \(\HOD\) is locally
definable from parameters.
\begin{thm}[Weak UA]\label{thm:weak_ua_appx}
    Assume there is an extendible cardinal and let \(\kappa\) be a strongly compact
    cardinal. Then \(\HOD\) has the \(\kappa\)-approximation and cover properties.
    If \(\kappa\) is supercompact, then \(\HOD\) is a weak extender model for the supercompactness
    of \(\kappa\).
    \begin{proof}
        By the strongly compact version of the \(\HOD\) dichotomy theorem \cite{HODNote}, 
        since \(\HOD\) computes sufficiently large successor cardinals, \(\HOD\)
        has the \(\kappa\)-cover property. By \cref{lma:hod_amenable},
        every countably complete ultrafilter on an ordinal is amenable to \(\HOD\).
        Therefore by \cref{thm:cov_to_appx}, 
        \(\HOD\) has the \(\kappa\)-approximation and 
        cover properties. The second part of the theorem is similar to \cref{thm:appxcov}.
    \end{proof}
\end{thm}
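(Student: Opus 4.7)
}
The plan is to sharpen the forcing bound via $\Sigma_3$-reflection at $\delta$. By Theorem~\ref{thm:hcd_ground}, Theorem~\ref{thm:hcd_extendible}, and Corollary~\ref{thm:wua_od} we already know that $V = \HOD[G]$ for some set-generic $G$ over $\HOD$; Proposition~\ref{prp:vop_bound} gives the crude bound that the forcing lies in $V_{(2^{2^\kappa})^+}$. Equivalently, by Vop\v{e}nka's theorem, there is a cardinal $\mu$ with $V = \HOD_{V_\mu}$, i.e.\ every set is ordinal-definable from a parameter in $V_\mu$. The task is to replace $\mu$ by some $\lambda_0 < \delta$.

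The key observation is that ``$V$ is a set-generic extension of $\HOD$'' is equivalent, via the Laver-Woodin-Reitz analysis of grounds, to the assertion that \emph{there exists} a cardinal $\theta$ such that $\HOD$ has the $\theta$-cover and $\theta$-approximation properties. The latter is a $\Pi_2$-property of $\theta$ (relative to the uniformly $\Sigma_2$-definable class $\HOD$), making the existential statement $\Sigma_3$. Since it holds in $V$, and $V_\delta \prec_{\Sigma_3} V$, it reflects: there is $\theta < \delta$ such that $\HOD^{V_\delta}$ has the $\theta$-cover and $\theta$-approximation properties in $V_\delta$. Using $\Sigma_2$-correctness of $V_\delta$ in $V$, one gets $\HOD^{V_\delta} = \HOD \cap V_\delta$, so these properties hold for $\HOD$ relative to sets of rank less than $\delta$.

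The main obstacle —— and the real content of the theorem —— is to lift the $V_\delta$-level cover and approximation to global ones in $V$. Concretely, one knows \emph{a priori} that $\HOD$ satisfies cover and approximation at some possibly much larger cardinal $\theta^* < (2^{2^\kappa})^+$ (since $V$ is a set-generic extension of $\HOD$ via a known forcing), and one wants to glue this with the $V_\delta$-level information at $\theta < \delta$ to conclude that $\HOD$ has $\theta$-cover and $\theta$-approximation in all of $V$. The strategy is to use that sets of rank $\geq \delta$ are already controlled by the global forcing from Proposition~\ref{prp:vop_bound}, and argue that the rigidity of intermediate grounds forces the two candidate forcings (the reflected one of size $< \theta$ and the known global one) to produce the same extension $V$ over $\HOD$. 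Once this is in hand, Laver-Woodin gives the desired Vop\v{e}nka algebra $\mathbb{B} \in \HOD \cap V_\delta$ (using also that $\delta$ is inaccessible in $V$, and hence in $\HOD$, from $\Sigma_3$-reflection), and a $\HOD$-generic $U \subseteq \mathbb{B}$ with $V = \HOD[U]$.

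I expect that the hardest step is this transfer from local to global approximation/cover, as naive $\Sigma_3$-reflection only delivers information below $V_\delta$. The resolution should hinge on the uniqueness of the forcing extension $V / \HOD$ up to Boolean-algebra equivalence, together with the Laver-Woodin definability of $\HOD$ inside any of its set-generic extensions, which together pin down the small reflected forcing as a genuine ground of $V$ and not merely of $V_\delta$.
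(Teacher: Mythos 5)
Your proposal does not address the statement at hand. You have written an argument aimed at \cref{thm:weak_ua_bound} (that \(V\) is a generic extension of \(\HOD\) by a forcing in \(V_\delta\), \(\delta\) the least \(\Sigma_3\)-reflecting cardinal), whereas the statement to be proved is \cref{thm:weak_ua_appx}: that \(\HOD\) has the \(\kappa\)-approximation and cover properties at a strongly compact \(\kappa\), and is a weak extender model for supercompactness when \(\kappa\) is supercompact. Nothing in your text establishes the \(\kappa\)-cover property, the \(\kappa\)-approximation property, or the weak extender model conclusion; the entire discussion of \(\Sigma_3\)-reflection, the Laver--Woodin--Reitz characterization of grounds, and the local-to-global transfer of approximation below \(V_\delta\) is machinery for bounding the size of the forcing, not for establishing that the approximation and cover properties hold at the particular cardinal \(\kappa\) in the first place.

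The mismatch is not repairable by rearrangement, because in the paper the logical dependency runs in the opposite direction: the proof of \cref{thm:weak_ua_bound} \emph{uses} \cref{thm:weak_ua_appx} (the approximation and cover properties at the least strongly compact \(\kappa\) are what make \(\HOD\) uniformly definable from \(\HOD\cap H(\kappa^+)\) via Hamkins's pseudoground definability theorem, which is what renders ``\(V\) is a generic extension of \(\HOD\)'' a \(\Sigma_3\) statement in a small parameter, ready for reflection). Your proposal in fact presupposes exactly this kind of definability. What the actual proof of \cref{thm:weak_ua_appx} requires is quite different and specific to Weak UA: (i) the strongly compact version of the \(\HOD\) dichotomy theorem gives the \(\kappa\)-cover property once \(\HOD\) computes sufficiently large successor cardinals (which follows here since \(V\) is a set-generic extension of \(\HOD\)); (ii) \cref{lma:hod_amenable} --- a consequence of the weak Ketonen order wellordering the countably complete ultrafilters under Weak UA --- gives that every countably complete ultrafilter on an ordinal is amenable to \(\HOD\); and (iii) \cref{thm:cov_to_appx} converts cover plus amenability into the \(\kappa\)-approximation property. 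The supercompact case then follows the stationary-partition argument of \cref{thm:appxcov}. None of these three ingredients appears in your proposal.
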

Given \cref{thm:weak_ua_appx}, one obtains \cref{thm:weak_ua_bound} 
simply by counting quantifiers.
\begin{proof}[Proof of \cref{thm:weak_ua_bound}]
        Let \(\kappa\) be the least strongly compact cardinal, so \(\kappa < \delta\). 
        Let \(H = \HOD\cap H(\kappa^+)\).
        By Hamkins's pseudoground model definability theorem \cite{Fuchs}, \(\HOD\) is 
        uniformly definable from \(H\) in 
        \(H(\gamma)\) for any strong limit cardinal \(\gamma > \kappa\).
        Therefore the statement that \(V\) is a generic extension  of \(\HOD\) is \(\Sigma_3\)
        in the parameter \(H\), and so it reflects to \(V_\delta\).
        Then taking a generic \(G\in V_\delta\) such that \(V_\delta = (\HOD\cap V_\delta)[G]\),
        the correctness of \(V_\delta\) implies that in fact, \(V = \HOD[G]\). 
\end{proof}
Now repeating the proofs, we can state slightly nicer theorems:
\begin{thm}[Weak UA] If \(\kappa\) is an extendible cardinal,
    the class of \(\kappa\)-complete ultrafilters on ordinals is wellordered by the
    weak Ketonen order. In particular, every \(\kappa\)-complete ultrafilter is ordinal definable.\qed
\end{thm}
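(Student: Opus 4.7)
The plan is to repeat the proofs of \cref{thm:sim} and its corollaries verbatim, weakening the completeness hypothesis from ``$\kappa^+$-complete'' to ``$\kappa$-complete'' throughout. The only genuine change is that the invocation of \cref{prp:vop_bound} in the proof of \cref{thm:sim} is replaced by \cref{thm:weak_ua_bound}, which under the extendibility hypothesis gives a dramatically tighter bound on the size of the forcing realising $V$ as a generic extension of $\HOD$: less than the least $\Sigma_3$-reflecting cardinal, rather than less than $(2^{2^\kappa})^+$.

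Concretely, I would first re-prove \cref{thm:sim} for $\kappa$-complete ultrafilters $U_0,U_1$ satisfying $(M_{U_0},\id_{U_0}) \sim (M_{U_1},\id_{U_1})$. Exactly as in the original proof, let $k_0 : M_{U_0} \to N$ and $k_1: M_{U_1} \to N$ witness the $\sim$ relation. Then $k_0 \circ j_{U_0}$ and $k_1 \circ j_{U_1}$ agree on $\HCD$ by \cref{lma:cd_agree}, and under the extendibility hypothesis $\HCD = \HCD(\kappa) = \HOD$ by \cref{thm:hcd_extendible} and \cref{thm:wua_od}. Hence the common set $W := U_0 \cap \HOD = U_1 \cap \HOD$ lies in $\HOD$ by \cref{lma:hod_amenable}. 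Now \cref{thm:weak_ua_bound} gives $V = \HOD[G]$ for some $G \in V_\delta$ with $\delta$ the least $\Sigma_3$-reflecting cardinal; since $\kappa$ is extendible and therefore reflects the $\Sigma_3$-assertion of the existence of a $\Sigma_3$-reflecting cardinal, $\delta < \kappa$. Thus the forcing has cardinality strictly less than the completeness of $W$, and the upwards L\'evy--Solovay theorem, applied in $\HOD$, forces both $U_0$ and $U_1$ to equal the unique $\kappa$-complete ultrafilter in $V$ extending $W$; so $U_0 = U_1$.

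With the $\kappa$-complete version of \cref{thm:sim} in hand, the wellordering conclusion is formally the same as in the preceding $\kappa^+$-case: for any pair of $\kappa$-complete ultrafilters, Weak UA supplies a common internal ultrapower $k_0, k_1$, and trichotomy of the ordinals $k_0(\id_{U_0})$ and $k_1(\id_{U_1})$ in the target model yields totality of $\ke^*$ modulo $\sim$, with $\sim$ collapsing to equality by the strengthened \cref{thm:sim}; wellfoundedness is inherited from the ordinal rank of \cref{lma:j_wf}. Ordinal definability is then immediate, since each $\kappa$-complete ultrafilter is determined by its rank in this wellorder. The main obstacle is the passage from $\kappa^+$-complete to $\kappa$-complete, which hinges entirely on the bound $\delta < \kappa$; without the improvement afforded by \cref{thm:weak_ua_bound}, the upwards L\'evy--Solovay step breaks down, since the forcing size bound of \cref{prp:vop_bound} exceeds the completeness of $W$ when $W$ is only $\kappa$-complete.
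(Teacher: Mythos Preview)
Your proposal is correct and matches the paper's intended approach exactly: the paper's own proof is simply \qed\ after the remark ``now repeating the proofs, we can state slightly nicer theorems,'' and your write-up is precisely that repetition, with \cref{thm:weak_ua_bound} replacing \cref{prp:vop_bound} so that the forcing has size below \(\kappa\) rather than merely below \((2^{2^\kappa})^+\). One small imprecision worth flagging: the assertion ``there exists a \(\Sigma_3\)-reflecting cardinal'' is \(\Sigma_4\), not \(\Sigma_3\), so your one-line justification for \(\delta < \kappa\) does not work as stated---but the conclusion \(\delta < \kappa\) is correct (the paper explicitly asserts it just before \cref{thm:weak_ua_bound}), and it can be obtained directly from an extendibility embedding rather than by reflection.
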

\begin{defn}
    We say the Ultrapower Axiom holds for a pair of ultrapower embeddings
    \(j_0 :V\to M_0\) and \(j_1 : V\to M_1\) if there is an inner model
    \(N\) admitting internal ultrapower embeddings \(k_0 : M_0\to N\)
    and \(k_1 : M_1\to N\) such that \(k_0\circ j_0 = k_1\circ j_1\).
\end{defn}
\begin{thm}[Weak UA]\label{thm:hod_ua}
    Suppose \(\kappa\) is an extendible cardinal.
    Then in \(\HOD\), the Ultrapower Axiom holds for any pair of ultrapower embeddings
    with critical point at least \(\kappa\).
    \begin{proof}
        Fix ultrapower embeddings \(j_0 : \HOD\to M_0\) and \(j_1 : \HOD\to M_1\)
        with critical point at least \(\kappa\).
        Since \(V\) is a forcing extension of \(\HOD\)
        for a forcing in \(V_\kappa\), these ultrapower embeddings lift
        to \(j_0^* : V\to M_0^*\) and \(j_1^* : V\to M_1^*\).
        Applying \cref{lma:hod_amenable}, every countably complete ultrafilter is amenable to \(\HOD\),
        so by elementarity, every countably complete
        ultrafilter of \(M_0^*\) (resp.\ \(M_1^*\)) is amenable to \(M_0\)
        (resp.\ \(M_1\)). In particular, any internal ultrapower
        embedding of \(M_0^*\) (resp.\ \(M_1^*\)) 
        restricts to a close embedding of \(M_0\) (resp.\ \(M_1\)).

        Applying the Weak Ultrapower Axiom, fix an inner model \(N^*\) and 
        elementary embeddings \(k_0^* : M_0^*\to N^*\)
        and \(k_1^* : M_1^*\to N^*\). Letting 
        \(k_0 = k_0^*\restriction M_0\) and \(k_1^* = k_1\restriction M_1\),
        the amenability of countably complete ultrafilters to \(\HOD\)
        implies \(k_0\) and \(k_1\) are close to \(M_0\) and \(M_1\).
        Also \cref{thm:Woodin} implies
        \(k_0\circ j_0 = k_1\circ j_1\). Let 
        \(X = H^N(k_0[M_0]\cup k_1[M_1])\), let \(H\) be the transitive collapse
        of \(X\), let \(h : H \to N\) be the inverse of the transitive collapse embedding,
        and let \(i_0 : M_0\to H\) and \(i_1 : M_1\to H\)
        be given by \(i_0 = h^{-1}\circ k_0\) and \(i_1 = h^{-1}\circ k_1\).
        It is then easy to show that \(i_0\) and \(i_1\) are internal ultrapower embeddings
        of \(M_0\) and \(M_1\).
    \end{proof}
\end{thm}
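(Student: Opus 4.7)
The plan is to lift the ultrapower embeddings from \(\HOD\) up to \(V\), apply Weak UA in \(V\) to get a comparison there, and then descend that comparison back to \(\HOD\) using the fact that countably complete ultrafilters are amenable to \(\HOD\).

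First, by \cref{thm:weak_ua_bound}, \(V = \HOD[G]\) for some filter \(G\) generic over \(\HOD\) for a forcing \(\mathbb{P} \in V_\delta\) where \(\delta\) is below the least strongly compact, and in particular \(\mathbb{P} \in V_\kappa\). Since the ultrapower embeddings \(j_0 : \HOD \to M_0\) and \(j_1 : \HOD \to M_1\) have critical points at least \(\kappa\), the standard lifting criterion applies trivially: there are ultrapower embeddings \(j_0^* : V \to M_0^*\) and \(j_1^* : V \to M_1^*\) extending \(j_0\) and \(j_1\), with \(M_n^* = M_n[G]\). Apply Weak UA in \(V\) to obtain an inner model \(N^*\) and internal ultrapower embeddings \(k_0^* : M_0^* \to N^*\) and \(k_1^* : M_1^* \to N^*\). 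By \cref{thm:unique_ords}, \(k_0^* \circ j_0^*\) and \(k_1^* \circ j_1^*\) agree on ordinals; restricting to \(\HOD\), the compositions \(k_0 \circ j_0\) and \(k_1 \circ j_1\) agree on ordinals, hence on all of \(\HOD\) (every element of which is constructible from a set of ordinals).

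Second, set \(k_0 = k_0^* \restriction M_0\) and \(k_1 = k_1^* \restriction M_1\). Here is the crucial step: by \cref{lma:hod_amenable} applied inside \(M_n^*\) (which inherits the hypotheses of Weak UA with an extendible cardinal by elementarity of \(j_n^*\)), every countably complete ultrafilter of \(M_n^*\) is amenable to \(\HOD^{M_n^*} = M_n\). The internal ultrapower \(k_n^*\) is generated by such an ultrafilter, so one checks that the pullback under \(k_n\) of any set in the image of \(k_n^*\) that lies in \(M_n^*\) intersected with \(M_n\) is in \(M_n\); this gives that \(k_n\) is close to \(M_n\).

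Third, collapse. Let \(X = H^{N^*}(k_0[M_0] \cup k_1[M_1])\), let \(H\) be its transitive collapse, \(h : H \to N^*\) the inverse, and \(i_n = h^{-1} \circ k_n : M_n \to H\). Then \(H = H^H(i_0[M_0] \cup i_1[M_1])\), and the closeness of each \(k_n\) transfers to closeness of each \(i_n\). Applying the argument from the proof of \cref{thm:ultrapowers} inside \(\HOD\) (using that \(i_0\) and \(i_1\) have an \(\omega\)-closed unbounded class of common fixed points), each \(i_n\) is \emph{almost} an ultrapower embedding of \(M_n\); combined with the eventual SCH, which holds in \(\HOD\) because \(\HOD\) inherits enough large cardinals (via the approximation and cover properties from \cref{thm:weak_ua_appx}) for \cref{lma:solovay_SCH} to apply there, \cref{cor:almost_ultra} upgrades this to genuine internal ultrapower embeddings \(i_0 : M_0 \to H\) and \(i_1 : M_1 \to H\). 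Since \(i_0 \circ j_0 = i_1 \circ j_1\), the triple \((H, i_0, i_1)\) witnesses UA in \(\HOD\) for the pair \((j_0, j_1)\).

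The main obstacle is the second step: extracting closeness of \(k_n\) to \(M_n\) from closeness of \(k_n^*\) to \(M_n^*\) together with amenability. The amenability hypothesis must be invoked inside \(M_n^*\) via elementarity, and one must verify that Weak UA together with the extendibility of \(\kappa\) really do pass to \(M_n^*\) in a form usable to apply \cref{lma:hod_amenable}; the rest of the argument is a routine concatenation of the Theorem~\ref{thm:ultrapowers} hull construction with the comparison in \(V\).
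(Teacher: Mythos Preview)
Your proposal follows the paper's outline exactly through the lift, the application of Weak UA in \(V\), the restriction of \(k_n^*\) to \(M_n\), and the hull-and-collapse construction. The closeness step is also fine; as in the paper, one simply applies elementarity of \(j_n^*\) to the first-order statement ``every countably complete ultrafilter on an ordinal is amenable to \(\HOD\)'' (which holds in \(V\) by \cref{lma:hod_amenable}), so your worry at the end is misplaced.

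Where you diverge is the final step, and there your route is both more complicated than needed and not quite right as stated. You invoke the argument of \cref{thm:ultrapowers} to get that each \(i_n\) is \emph{almost} an ultrapower embedding, and then appeal to eventual SCH in \(\HOD\) (via \cref{cor:almost_ultra}) to upgrade. Two problems: first, the argument of \cref{thm:ultrapowers} is written for two embeddings with the \emph{same} domain, whereas here \(i_0 : M_0\to H\) and \(i_1 : M_1\to H\) have different domains, so the ``common fixed points'' trick does not apply as written and needs reworking. Second, and more to the point, the detour through ``almost'' and SCH is unnecessary.

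The direct argument---presumably what the paper's ``easy to show'' intends---exploits the fact that \(j_0,j_1\) are \emph{already} ultrapower embeddings of \(\HOD\). Let \(b_1\in M_1\) be a seed for \(j_1\), so \(M_1 = H^{M_1}(j_1[\HOD]\cup\{b_1\})\). Since \(k_0\circ j_0 = k_1\circ j_1\), one has \(k_1\circ j_1[\HOD] = k_0\circ j_0[\HOD]\subseteq k_0[M_0]\), and hence
\[
k_1[M_1]\ \subseteq\ H^{N^*}\bigl(k_0[M_0]\cup\{k_1(b_1)\}\bigr).
\]
Therefore \(X = H^{N^*}(k_0[M_0]\cup\{k_1(b_1)\})\), and after collapsing, \(H = H^H(i_0[M_0]\cup\{h^{-1}(k_1(b_1))\})\). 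So \(i_0\) is an ultrapower embedding outright; combined with closeness it is internal. The symmetric argument with a seed for \(j_0\) handles \(i_1\). No SCH, no ``almost.''
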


\begin{prp}\label{prp:gch}
    Suppose \(\kappa\) is supercompact and the Ultrapower Axiom holds for 
    embeddings with critical point at least \(\kappa\).
    Then for all cardinals \(\lambda \geq \kappa\), \(2^\lambda = \lambda^+\).
    \begin{proof}
        This follows from the proof of the main theorem of \cite{GCH}.
    \end{proof}
\end{prp}
\begin{thm}[Weak UA]\label{thm:gch}
    If \(\kappa\) is extendible,
    then for all cardinals \(\lambda \geq \kappa\), \(2^\lambda = \lambda^+\).
    \begin{proof}
        By \cref{thm:hod_ua} and \cref{prp:gch}, in \(\HOD\), the Generalized Continuum Hypothesis
        holds at all cardinals greater than or equal 
        to the least extendible cardinal. By \cref{thm:weak_ua_bound},
        \(V\) is a generic extension of \(\HOD\)
        for a forcing of size less than the least extendible cardinal,
        and so the Generalized Continuum Hypothesis holds in \(V\)
        at all cardinals greater than or equal to the least extendible cardinal.
    \end{proof}
\end{thm}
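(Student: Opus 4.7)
The plan is to chain together the three main results proved just above: \cref{thm:hod_ua} gives UA inside \(\HOD\) for embeddings with critical point at least \(\kappa\); \cref{prp:gch} turns UA above a supercompact into GCH above that supercompact; and \cref{thm:weak_ua_bound} lets us transfer cardinal arithmetic facts between \(\HOD\) and \(V\). The overall strategy is first to establish GCH above \(\kappa\) inside \(\HOD\), and then to push this up to \(V\) via the bounded forcing relating the two models.

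First, I would verify that \(\kappa\) remains supercompact in \(\HOD\). Since \(\kappa\) is extendible it is supercompact in \(V\), so by \cref{thm:weak_ua_appx}, \(\HOD\) is a weak extender model for the supercompactness of \(\kappa\); by the Magidor/Woodin characterization of weak extender models, this immediately yields that \(\kappa\) is supercompact in \(\HOD\). Next I would invoke \cref{thm:hod_ua}, which tells us that within \(\HOD\), the Ultrapower Axiom holds for every pair of ultrapower embeddings with critical point at least \(\kappa\). With \(\kappa\) supercompact in \(\HOD\) and UA holding there for embeddings above \(\kappa\), the hypotheses of \cref{prp:gch} are satisfied inside \(\HOD\), so that \(\HOD\) thinks \(2^\lambda = \lambda^+\) for every cardinal \(\lambda \geq \kappa\).

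To pass from \(\HOD\) to \(V\), I would apply \cref{thm:weak_ua_bound}: there is some generic \(G \in V_\delta\) with \(V = \HOD[G]\), where \(\delta\) is the least \(\Sigma_3\)-reflecting cardinal. Since extendible cardinals are \(\Sigma_n\)-reflecting for every \(n\), we have \(\delta < \kappa\), so the forcing has size strictly below \(\kappa\). A standard nice name counting argument then shows that any forcing of size less than \(\kappa\) preserves the value of \(2^\lambda\) at every cardinal \(\lambda \geq \kappa\): the \(\HOD\)-cardinality of the set of nice names for a subset of \(\lambda\) is at most \(((\lambda^+)^{|\mathbb P|})^{\HOD} = (\lambda^+)^{\HOD} \leq (\lambda^+)^V\), while the cofinality of \(\lambda\) and the successor \(\lambda^+\) are unchanged. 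Hence \(2^\lambda = \lambda^+\) in \(V\) as well.

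I do not expect any real obstacle: everything reduces to packaging the earlier theorems in the right order, with the only slightly subtle point being the verification that \(\delta < \kappa\) (so that the bounded forcing preservation argument applies). This is immediate from the fact that extendibility is vastly stronger than \(\Sigma_3\)-reflection, but it is worth recording explicitly.
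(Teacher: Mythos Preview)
Your proposal is correct and follows essentially the same route as the paper: use \cref{thm:hod_ua} and \cref{prp:gch} to get GCH above \(\kappa\) in \(\HOD\), then transfer to \(V\) via the small forcing from \cref{thm:weak_ua_bound}. You spell out two points the paper leaves implicit---that \(\kappa\) is supercompact in \(\HOD\) (which you correctly extract from \cref{thm:weak_ua_appx}) and the nice-name counting for preservation of GCH under small forcing---but the overall architecture is identical.
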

A uniform ultrafilter \(U\) on a cardinal \(\lambda\) is \textit{Dodd sound} if the function
\(E : P(\lambda)\to M_U\) defined by \(E(A) = j_U(A)\cap \id_U\) belongs to \(M_U\).
At least in the context of GCH, 
one can think of Dodd soundness as a generalization of supercompactness:
if \(2^{<\lambda} = \lambda\) and \(\mathcal U\) is a normal fine ultrafilter on 
\(P(\lambda)\),
then there is a unique Dodd sound ultrafilter Rudin-Keisler equivalent to \(\mathcal U\).
(Not every Dodd sound ultrafilter is equivalent to a normal fine ultrafilter.)
\begin{prp}\label{prp:mo}
    Suppose \(\kappa\) is a cardinal such that the Ultrapower Axiom holds for 
    embeddings with critical point at least \(\kappa\).
    Then the Mitchell order is linear on \(\kappa\)-complete
    Dodd sound ultrafilters.\qed
\end{prp}

\begin{thm}[Weak UA]\label{thm:mo}
    If \(\kappa\) is extendible,
    then the Mitchell order is linear on \(\kappa\)-complete
    Dodd sound ultrafilters. In particular, the Mitchell order is linear
    on the class of \(\kappa\)-complete normal fine ultrafilters
    with underlying set \(P_\textnormal{bd}(\lambda)\) for some \(\lambda \geq \kappa\).
    \begin{proof}
        \cref{thm:hod_ua} and \cref{prp:mo} yield the linearity of the Mitchell order
        on \(\kappa\)-complete Dodd sound ultrafilters in HOD. By \cref{thm:weak_ua_bound},
        \(V\) is a generic extension of \(\HOD\)
        for a forcing of size less than the least extendible cardinal,
        which implies the linearity of the Mitchell order on \(\kappa\)-complete
        Dodd sound ultrafilters in \(V\).
        The fact that the linearity of the Mitchell order on Dodd
        sound ultrafilters implies the linearity of the Mitchell order on
        normal fine ultrafilters is a result from \cite{MO}. The result only
        applies to normal fine ultrafilters on \(P_\textnormal{bd}(\lambda)\) if
        \(2^{<\lambda} = \lambda\), which is true by \cref{thm:gch}.
    \end{proof}
\end{thm}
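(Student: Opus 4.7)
My plan is to mirror the structure of \cref{thm:gch}: first establish the desired linearity statement inside \(\HOD\) using the available ultrapower axiom there, and then push it up to \(V\) by exploiting the fact that \(V\) is a small generic extension of \(\HOD\).

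More concretely, I would first invoke \cref{thm:hod_ua} to conclude that in \(\HOD\), the Ultrapower Axiom holds for all pairs of ultrapower embeddings with critical point at least \(\kappa\). Applying \cref{prp:mo} inside \(\HOD\) then yields that the Mitchell order is linear on the class of \(\kappa\)-complete Dodd sound ultrafilters of \(\HOD\). Next I would transfer this statement from \(\HOD\) to \(V\) using \cref{thm:weak_ua_bound}, which gives that \(V = \HOD[G]\) for some generic \(G\) lying in \(V_\delta\) with \(\delta\) the least \(\Sigma_3\)-reflecting cardinal, and in particular \(\delta < \kappa\) since \(\kappa\) is extendible. Standard L\'evy--Solovay arguments then identify every \(\kappa\)-complete ultrafilter of \(V\) on an ordinal with the filter generated by its restriction to \(\HOD\), and show that Dodd soundness and the Mitchell order are both preserved under this correspondence, so the linearity proved in \(\HOD\) lifts to \(V\).

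For the second sentence, I would appeal to the result from \cite{MO} quoted in the proof of \cref{thm:gch}, which reduces linearity of the Mitchell order on \(\kappa\)-complete normal fine ultrafilters on \(P_{\text{bd}}(\lambda)\) to linearity on Dodd sound ultrafilters, under the cardinal arithmetic hypothesis \(2^{<\lambda} = \lambda\). But this hypothesis is furnished by \cref{thm:gch} for every \(\lambda \geq \kappa\), so the reduction applies to every such \(\lambda\) and the ``in particular'' clause follows.

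The main obstacle I anticipate is the transfer step: carefully verifying that both Dodd soundness and the Mitchell order relation are robust under the small forcing from \(\HOD\) to \(V\). One has to check that if \(W_0, W_1 \in \HOD\) are the \(\HOD\)-ultrafilters underlying given \(\kappa\)-complete Dodd sound ultrafilters \(U_0, U_1\) of \(V\), then (i) \(W_0\) and \(W_1\) are themselves Dodd sound in \(\HOD\), and (ii) \(W_0 \mo W_1\) in \(\HOD\) is equivalent to \(U_0 \mo U_1\) in \(V\). Both are routine given that the forcing is far below the completeness of the ultrafilters, and that lifts of ultrapower embeddings across small forcing preserve the relevant internal structure, but they are the only nontrivial verifications in the argument.
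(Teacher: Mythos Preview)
Your proposal is correct and follows essentially the same approach as the paper's proof: establish linearity in \(\HOD\) via \cref{thm:hod_ua} and \cref{prp:mo}, transfer to \(V\) using the small-forcing fact \cref{thm:weak_ua_bound}, and then invoke the result from \cite{MO} together with \cref{thm:gch} for the normal fine case. You are somewhat more explicit than the paper about the L\'evy--Solovay verification that Dodd soundness and the Mitchell relation survive the transfer, which the paper leaves implicit, but the overall architecture is identical.
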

\bibliographystyle{plain}
\bibliography{Bibliography.bib}
\end{document}